\documentclass[11pt]{amsart}

\newtheorem*{srt}{Single Ring Theorem}
\newtheorem*{prop14}{Proposition 14}

\usepackage[top=1.2in, bottom=1.2in, left=1.5in, right=1.5in]{geometry}
\usepackage{amsmath, amsthm, amssymb, enumerate, appendix, color}

\newtheorem{theorem}{Theorem}[section]

\newtheorem{corollary}[theorem]{Corollary}
\newtheorem{lemma}[theorem]{Lemma}

\theoremstyle{remark}
\newtheorem{remark}[theorem]{Remark}

\numberwithin{equation}{section}

\DeclareMathOperator{\E}{\mathbb{E}}

\DeclareMathOperator*{\diag}{diag}

\DeclareMathOperator*{\supp}{supp}

\DeclareMathOperator*{\im}{Im}

\def \N {\mathbb{N}}
\def \P {\mathbb{P}}
\def \R {\mathbb{R}}
\def \C {\mathbb{C}}

\def \T {\mathbb{T}}

\def \one {{\bf 1}}

\def \EE {\mathcal{E}}

\def \e {\varepsilon}

\def \d {\delta}
\def \D {\Delta}

\def \k {\kappa}
\def \l {\lambda}

\def \s {\sigma}
\def \t {\tau}

\def \< {\langle}
\def \> {\rangle}
\def \^ {\widehat}
\def \tran {\mathsf{T}}
\def \HS {\mathrm{HS}}

%The \rule below adds an invisible 3mm tall 0mm wide rectangle that enforces a minimum height on \left\{ and \right\}
\renewcommand{\Pr}[2][]{\P_{#1} \left\{ #2 \rule{0mm}{3mm}\right\}}

\newcommand{\til}[1]{\widetilde{#1}}
\newcommand{\ovr}[1]{\check{#1}}

\newcommand{\amp}{\wedge}

\def \smin {s_{\min}}

\title[]{Invertibility of random matrices: unitary and orthogonal perturbations}

\author{Mark Rudelson}
\author{Roman Vershynin}

\address{Department of Mathematics, University of Michigan, 530 Church St., Ann Arbor, MI 48109, U.S.A.}
\email{\{rudelson, romanv\}@umich.edu}
\thanks{M. R. was partially supported by NSF grant DMS 1161372. R. V. was partially supported by NSF grant DMS 1001829.}

\dedicatory{To the memory of Joram Lindenstrauss}

\date{\today}

\subjclass[2000]{60B20}

\begin{document}

\begin{abstract}
  We show that a perturbation of any fixed square matrix $D$ by a random unitary matrix is well invertible with high probability.
  A similar result holds for perturbations by random orthogonal matrices;
  the only notable exception is when $D$ is close to orthogonal.
  As an application, these results completely eliminate a hard-to-check condition from the Single Ring Theorem
  by Guionnet, Krishnapur and Zeitouni.
\end{abstract}

\maketitle

\setcounter{tocdepth}{2}
\tableofcontents

\section{Introduction}
%-------------------------

\subsection{The smallest singular values of random matrices}
%...............

Singular values capture important metric properties of matrices.
For an $N \times n$ matrix $A$ with real or complex entries, $n \le N$,
the {\em singular values} $s_j(A)$ are the eigenvalues of $|A| = (A^*A)^{1/2}$ arranged in a non-decreasing
order, thus $s_1(A) \ge \ldots s_n(A) \ge 0$.
The smallest and the largest singular values play a special role. $s_1(A)$ is the operator norm of $A$,
while $\smin(A):=s_n(A)$ is the distance in the operator norm
from $A$ to the set of singular matrices (those with rank smaller than $n$).
For square matrices, where $N=n$, the smallest singular value $s_n(A)$ 
provides a {\em quantitative measure of invertibility} of $A$.
It is natural to ask whether typical matrices are well invertible; one often models ``typical'' matrices as random matrices.
This is one of the reasons why the smallest singular values of different classes of random matrices
have been extensively studied (see \cite{RV2} and the references therein).

On a deeper level, questions about the behavior of $\smin(A)$ for random $A$ arise
in several intrinsic problems of random matrix theory.
Quantitative estimates of $\smin(A)$ for square random matrices $A$
with independent entries \cite{R, TV1, RV1, TV2}
were instrumental in proving the Circular Law, which states that the distribution of the eigenvalues of such matrices
converges as $n \to \infty$ to the uniform probability measure on the disc \cite{GT, TV3}.
Quantitative estimates on $\smin(A)$ of random Hermitian matrices $A$ with independent entries above the diagonal
were necessary in the proof of the local semicircle law for the limit spectrum of such matrices \cite{ESY, TV4}.
Stronger bounds for the tail distribution of the smallest singular value of a Hermitian random matrix were established
in \cite{V sym, ESY-rep}, see also \cite{Nguyen}.

\subsection{The main results}
%...............

In the present paper we study the smallest singular value for a natural class of random matrices,
namely for random unitary and orthogonal perturbations of a fixed matrix.
Let us consider the complex case first. Let $D$ be any fixed $n \times n$ complex matrix,
and let $U$ be a random matrix uniformly distributed over the unitary group $U(n)$ with respect to the Haar measure.
Then the matrix $D+U$ is non-singular with probability $1$, which can be easily observed considering its determinant.
However, this observation does not give any useful quantitative information on the degree of non-singularity.
A quantitative estimate of the smallest singular value of $D+U$ is one of the two main results of this paper.

\begin{theorem}[Unitary perturbations]					\label{thm: main unit}
  Let $D$ be an arbitrary fixed $n \times n$ matrix, $n \ge 2$.
  Let $U$ be a random matrix uniformly distributed in the unitary group $U(n)$.
  Then
  \[
    \Pr{\smin(D+U) \le t} \le t^c n^C, \quad t > 0.
  \]
\end{theorem}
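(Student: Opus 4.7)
The plan is to combine the invariance of Haar measure with the compressible/incompressible decomposition of the sphere pioneered by Rudelson--Vershynin. The first reduction uses Haar invariance: taking the SVD $D = V\Sigma W^{*}$, one has $D+U = V(\Sigma + V^{*}UW)W^{*}$, and since $V^{*}UW$ is again Haar, $\smin(D+U)$ agrees in law with $\smin(\Sigma + U')$ for a fresh Haar unitary $U'$. Hence I may assume $D = \diag(\sigma_1,\ldots,\sigma_n)$ with $\sigma_j \ge 0$.

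Next I would split the complex sphere $S_{\C}^{n-1}$, for small parameters $\delta,\rho$, into the set $\mathrm{Comp}(\delta,\rho)$ of compressible vectors (those within Euclidean distance $\rho$ of some $\delta n$-sparse unit vector) and its complement $\mathrm{Incomp}$. For compressible directions the analysis is essentially a net argument: for any fixed unit $x$, $Ux$ is uniform on $S_{\C}^{n-1}$, so $\Pr{\|(D+U)x\|\le t}$ vanishes unless $\|Dx\|\in[1-t,1+t]$ and is otherwise at most $(Ct)^{2n-1}$. Combining this small-ball estimate with a $\rho$-net of cardinality at most $(C/\rho)^{2\delta n}$ and the Lipschitz bound $\|(D+U)(x-y)\|\le(\|D\|+1)\|x-y\|$ yields the desired estimate on this piece once $\delta,\rho$ are chosen appropriately small.

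For incompressible directions I would invoke the distance-to-hyperplane bound
\[
\smin(D+U) \;\ge\; \frac{1}{\sqrt n}\min_{k}\operatorname{dist}(a_k,H_k),
\]
where $a_k$ is the $k$-th column of $D+U$ and $H_k$ the span of the remaining columns; a union bound reduces matters to estimating $\Pr{\operatorname{dist}(a_1,H_1)\le\sqrt n\,t}$. Conditioning on $Ue_2,\ldots,Ue_n$ forces $Ue_1 = e^{i\phi}v$ with $\phi$ uniform on $[0,2\pi)$ and $v$ a fixed unit vector orthogonal to these columns. Letting $w$ be a unit normal to the conditional $H_1$, one obtains
\[
\operatorname{dist}(a_1,H_1) \;=\; \bigl|\langle De_1,w\rangle + e^{i\phi}\langle v,w\rangle\bigr|,
\]
and elementary geometry on the circle gives the one-dimensional small-ball bound $\Pr[\phi]{\operatorname{dist}(a_1,H_1)\le s}\le Cs/|\langle v,w\rangle|$.

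The main obstacle will be the final step: establishing a small-ball bound of the form $\Pr{|\langle v,w\rangle|\le \eps} \le \eps^{c}n^{C}$ for the random vectors $v,w$. The difficulty is that $v$ is orthogonal to $\{Ue_k:k\ne 1\}$ while $w$ is orthogonal to $\{De_k+Ue_k:k\ne 1\}$, so the relationship between them depends nonlinearly on $D$. I would attack this by a second application of Haar invariance, rotating coordinates so that $v$ becomes a fixed coordinate vector and reinterpreting the bound as a small-ball statement about $w$ under the residual Haar randomness on a copy of $U(n-1)$. A recursive (in $n$) argument of the same flavor should then close the loop. It is precisely this step that I expect to break down in the orthogonal setting when $D$ is close to an orthogonal matrix, accounting for the exceptional case flagged in the abstract.
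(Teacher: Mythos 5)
Your plan takes a genuinely different route from the paper's, and the step you yourself flag as the ``main obstacle'' is a real gap that the sketched recursion does not close. After conditioning, you need a bound $\Pr{|\langle v,w\rangle|\le\eps}\le\eps^{c}n^{C}$ with $v\perp\{Ue_{k}:k\ne1\}$ and $w$ the unit normal to $\mathrm{span}(De_{k}+Ue_{k}:k\ne1)$. Rotating so that $v\mapsto e_{1}$ turns this into $\mathrm{dist}(e_{1},\widetilde{H})$, where $\widetilde{H}$ is spanned by the $n-1$ vectors $d_{k}Re_{k}+(0,u_{k}')\in\C^{n}$; here $R$ is a rotation determined by the conditioned columns and $(u_{k}')_{k\ne 1}$ are the columns of a fresh Haar $U'\in U(n-1)$. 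This is the distance from a fixed vector to a random hyperplane in $\C^{n}$, and it is \emph{not} $\smin$ of any $(n-1)\times(n-1)$ matrix of the form ``deterministic plus Haar unitary'': after the rotation the deterministic block $(d_{k}Re_{k})_{k\ne1}$ is $n\times(n-1)$, and its relevant minor is neither square with the right structure nor diagonal. Passing through the determinant identity $\mathrm{dist}(e_{1},\widetilde H)=|\det(D'+U')|/\mathrm{Vol}_{n-1}(\cdot)$ and bounding $|\det|\ge\smin^{\,n-1}$, $\mathrm{Vol}\le(\|D\|+1)^{n-1}$ loses exponentially in $n$, so the inductive hypothesis yields nothing for moderate $\eps$. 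The recursion does not close as stated. (Note also that $\smin(A)\ge n^{-1/2}\min_{k}\mathrm{dist}(a_{k},H_{k})$ holds for every matrix, so the compressible/incompressible split is superfluous for your plan; it exists in the i.i.d.-entry proofs to handle arithmetic-structure obstructions to small-ball bounds, which, as the paper stresses in Section~\ref{s: strategy}, is not the difficulty here. The difficulty is the lack of independence among the entries of $D+U$.)

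The paper resolves that dependence problem by \emph{injecting} independent randomness rather than trying to extract it from Haar measure by conditioning on columns. It writes $U=V^{-1}R^{-1}W$, with $V$ Haar, $R=\diag(r,1,\ldots,1)$ a single random complex phase, and $W\in U(n)$ within $O(\e^{2})$ of $I+\e S$ for a skew-Hermitian $S$ whose first row and column carry independent real Gaussians; after conditioning on $V$, the quantity $|h^{\tran}A_{1}|$ from Lemma~\ref{lem: quadratic form} becomes the explicit ratio $|A_{11}-X^{\tran}MY|/\sqrt{1+\|MY\|_{2}^{2}}$, a tractable function of the independent variables $r$, $s$, $Z$, and a three-way case analysis on the size of the denominator and of $\|M\|$ finishes the argument. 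This local-plus-global perturbation scheme, not a compressible/incompressible net, is what produces a bound uniform in $D$.
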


In the statement above and thereafter $C, c$ denote positive absolute constants.
As a consequence of Theorem~\ref{thm: main unit}, the random matrix $D+U$ is well invertible,
$\|(D+U)^{-1}\| = n^{O(1)}$ with high probability.

\medskip

An important point in Theorem~\ref{thm: main orth} is that the bound is independent of the deterministic matrix $D$.
This feature is essential in the application to the Single Ring Theorem,
which we shall discuss in Section~\ref{s: single ring} below.

\medskip

To see that Theorem~\ref{thm: main orth} is a subtle result, note that in general it {\em fails over the reals}. 
Indeed, suppose $n$ is odd. If $-D, U \in SO(n)$, then $-D^{-1} U \in SO(n)$ has eigenvalue $1$
and as a result $D+U=D(I_n+D^{-1}U)$ is singular.
Therefore, if $D \in O(n)$ is any fixed matrix and $U \in O(n)$ is random uniformly distributed,
$\smin(D+U) = 0$ with probability at least $1/2$.
However, it turns out that this example is essentially the only obstacle for Theorem~\ref{thm: main unit}
in the real case. Indeed, our second main result states that if $D$ is not close to $O(n)$,
then $D+U$ is well invertible with high probability.

\begin{theorem}[Orthogonal perturbations]					\label{thm: main orth}
  Let $D$ be a fixed $n \times n$ real matrix, $n \ge 2$. Assume that
  \begin{equation}				\label{eq: D}
  \|D\| \le K, \quad \inf_{V \in O(n)} \|D-V\| \ge \d
  \end{equation}
  for some $K \ge 1$, $\d \in (0,1)$.
  Let $U$ be a random matrix uniformly distributed in the orthogonal group $O(n)$.
  Then
  $$
  \Pr{\smin(D+U) \le t} \le t^c (Kn/\d)^C, \quad t > 0.
  $$
\end{theorem}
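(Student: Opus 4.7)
The plan is to adapt the Rudelson--Vershynin compressible / incompressible framework for bounding $\smin$ to the Haar orthogonal setting, with the hypothesis $\inf_V \|D-V\| \ge \d$ playing the decisive role for incompressible vectors.

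First, by the bi-invariance of the Haar measure on $O(n)$, writing an SVD $D = W_1 \Sigma W_2$ gives $D+U = W_1(\Sigma + W_1^\tran U W_2^\tran) W_2$, and the conjugate of $U$ is again Haar. Thus I may assume $D = \diag(s_1,\ldots,s_n)$ with $0 \le s_i \le K$; the hypothesis $\inf_{V \in O(n)}\|D-V\| \ge \d$ then forces some $s_i$ to satisfy $|s_i - 1| \ge \d$ (by taking $V = I$), and more globally that $\Sigma$ is $\d$-far from every element of $O(n)$. This is the structural feature to exploit.

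Second, I split $S^{n-1}$ into compressible vectors (those within $\rho$ of the set of $\rho n$-sparse vectors) and their complement. For a fixed $x \in S^{n-1}$, $Ux$ is Haar-uniform on the sphere, so
\[
  \Pr{\|(D+U)x\| \le t} \le (Ct)^{n-1},
\]
since the event asks $Ux$ to lie in a spherical cap of radius $t$ around $-Dx$. An $\e$-net on the compressible set has cardinality $(C/\rho\e)^{\rho n}$; taking $\e \sim t$ and $\rho = \rho(\d,K)$ small enough yields, by a union bound, a tail much stronger than needed. The hypothesis on $D$ is essentially unused at this stage.

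The incompressible case is the main difficulty. The standard column-distance reduction used for random matrices with i.i.d.\ entries fails here, because conditioning on $n-1$ columns of a Haar $U$ determines the last column up to sign, leaving no independent randomness for a Littlewood--Offord / LCD type small-ball argument. My proposed workaround is to condition on all but a small block of $k \ge 2$ columns of $U$: the remaining $k$ columns form a Haar orthonormal frame on the $k$-dimensional orthogonal complement of the conditioned block, giving a genuinely $k$-dimensional random perturbation of tractable density. One then needs a small-ball bound for a weighted linear combination of the entries of this frame, with weights coming from the incompressible $x$ and the deterministic $D$. The hypothesis $\inf_{V \in O(n)}\|D-V\| \ge \d$ enters decisively here: without it, $-D$ could be approximately orthogonal and $D+U$ genuinely near-singular on a nontrivial subspace (the odd-$n$ obstruction from the introduction), whereas the $\d$-gap quantitatively prevents the conditioned portion of $U$ from aligning with $D$ so as to concentrate the small-ball density. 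Turning the operator-norm gap $\d$ into a useful lower bound on the concentration function, while simultaneously controlling the additive structure arising from the incompressibility of $x$ via an LCD-type quantity and running the associated net argument on the incompressible sphere, is in my view the hardest step. Combined with the compressible estimate, this delivers the stated tail bound $t^c (Kn/\d)^C$.
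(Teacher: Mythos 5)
Your reduction to diagonal $D$ via bi-invariance is correct, and your compressible estimate $\Pr{\|(D+U)x\|\le t}\lesssim t^{\,n-1}$ (measure of a spherical cap of radius $t$) is a valid single-vector bound that would handle the compressible part by a net argument. But the proposal has a genuine, and in fact self-acknowledged, gap: the incompressible case is left unresolved. You write that ``turning the operator-norm gap $\d$ into a useful lower bound on the concentration function \ldots is in my view the hardest step,'' which is exactly the point where a proof is required rather than a description of the difficulty. The workaround you suggest---conditioning on all but a $k$-column block of $U$---does not by itself produce a tractable small-ball estimate: the $k$ columns are a Haar-random orthonormal $k$-frame on the orthogonal complement of the conditioned block, and there is no i.i.d.\ structure on which a Littlewood--Offord/LCD argument can be run. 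Indeed, the paper explicitly observes (Section~\ref{s: strategy}) that arithmetic structure plays \emph{no} role in this problem; trying to import the LCD machinery is a misdiagnosis of where the difficulty lies.

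The paper's actual route is quite different and does not pass through compressible/incompressible vectors at all. Theorem~\ref{thm: main orth} is proved as a short reduction to Theorem~\ref{thm: main orth full} (complex diagonal $D$, with the condition $\max_{i,j}|d_i^2-d_j^2|\ge\d$): one writes $D=\diag(d_i)$ with $d_i\ge 0$, notes that $\inf_V\|D-V\|\ge\d$ gives $\max_i|d_i-1|\ge\d$, and splits into (a) the case $\max_{i,j}|d_i^2-d_j^2|\ge\d^2/4$ where the full version applies, and (b) the case where all $d_i$ are within $\d/2$ of each other and hence all either $\ge 1+\d/2$ or all $\le 1-\d/2$, in which case $\smin(D+U)\ge\d/2$ deterministically. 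The full version is then proved by exploiting the Lie-group structure of $O(n)$: replace $U$ by $V^{-1}W$ with $W=I+\e S+O(\e^2)$, $S$ a Gaussian skew-symmetric matrix (local perturbation), inject independent random rotations $R$, $Q$ of two or three coordinates (global perturbations), decompose the matrix by whether an $(n-3)\times(n-3)$ minor is well invertible (Lemmas~\ref{lem: well invertible minor} and \ref{lem: poorly invertible minor}), and in the end reduce to the invertibility of a $2\times 2$ or $3\times 3$ random perturbation, which is handled by Remez-type inequalities (Theorem~\ref{thm: main low dim}) and a ``breaking complex orthogonality'' lemma. None of this resembles the LCD-based strategy you are proposing, and given the dependence structure of the Haar ensemble, the compressible/incompressible framework does not appear to be salvageable here.
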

Similarly to the complex case, this bound is uniform over all matrices $D$ satisfying \eqref{eq: D}. This condition is relatively mild: in the case when $K=n^{C_1}$ and $\d=n^{-C_2}$ for some constants $C_1,C_2>0$, we have
  $$
  \Pr{\smin(D+U) \le t} \le t^c n^C, \quad t > 0,
  $$
as in the complex case.
It is possible that the condition $\|D\| \le K$ can be eliminated from the Theorem~\ref{thm: main orth};
we have not tried this in order to keep the argument more readable, and because such condition
already appears in the Single Ring Theorem.

\medskip

Motivated by an application to the Single Ring Theorem,
we shall prove the following more general version of Theorem~\ref{thm: main orth},
which is valid for {\em complex} diagonal matrices $D$.

\begin{theorem}[Orthogonal perturbations, full version]		\label{thm: main orth full}
  Consider a fixed matrix $D = \diag(d_1,\ldots,d_n)$, $n \ge 2$,
  where $d_i \in \C$. Assume that
  \begin{equation}				\label{eq: di}
  \max_{i} |d_i| \le K, \quad
  \max_{i,j} |d_i^2 - d_j^2| \ge\d
  \end{equation}
  for some $K \ge 1$, $\d \in (0,1)$.
  Let $U$ be a random matrix uniformly distributed in the orthogonal group $O(n)$.
  Then
  $$
  \Pr{\smin(D+U) \le t} \le t^c (Kn/\d)^C, \quad t > 0.
  $$
\end{theorem}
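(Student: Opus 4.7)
\emph{Proof proposal.} After permuting indices (which preserves $\smin$ and the law of $U$), we may assume the pair attaining~\eqref{eq: di} is $(1,2)$. Setting $s := d_1 + d_2$ and $t := d_1 - d_2$, the hypothesis reads $|st| \ge \d$; since $|s|, |t| \le 2K$, we have $\min(|s|, |t|) \ge \d/(2K)$. The plan is to exploit bi-invariance of the Haar measure to introduce two free uniform angles, reduce the problem via Schur complement to a small-ball bound for a $2 \times 2$ determinant, and recognise $|d_1^2 - d_2^2| = |st|$ as controlling the amplitude of an explicit additive trigonometric polynomial in those angles.

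For angles $\theta, \phi$, let $R_\theta, R_\phi \in O(n)$ denote the rotations by $\theta, \phi$ in the $(e_1, e_2)$-plane (identity on the complement). Bi-invariance gives $\smin(D+U) \overset{d}{=} \smin(R_\theta^\tran D R_\phi + U)$, and this persists when $(\theta, \phi)$ is uniform on $[0, 2\pi)^2$ and independent of $U$. Since $D$ is diagonal, $R_\theta^\tran D R_\phi$ is block-diagonal, with bottom-right block $\diag(d_3, \ldots, d_n)$ and top-left block $D''(\theta, \phi) \in \C^{2\times 2}$ satisfying $\det D''(\theta, \phi) = d_1 d_2$ identically. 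Partition $D+U$ correspondingly and set $E := \diag(d_3, \ldots, d_n) + U_{22}$ and $F_0 := U_{11} - U_{12} E^{-1} U_{21}$. On the event $\{\smin(E) \ge \eta_1\}$, the block-inverse formula yields
\[
\smin(R_\theta^\tran D R_\phi + U) \;\gtrsim\; \eta_1^2 \cdot \smin\bigl(D''(\theta, \phi) + F_0\bigr),
\]
with $F_0$ independent of $(\theta, \phi)$. Since $D''(\theta, \phi) + F_0$ is $2 \times 2$ with operator norm at most $K + 1 + \eta_1^{-1}$, it suffices to produce a small-ball bound on $|\det(D''(\theta, \phi) + F_0)|$ in $(\theta, \phi)$.

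The key algebraic identity: with $\alpha := \theta - \phi$ and $\beta := \theta + \phi$, direct expansion gives
\begin{multline*}
\det\bigl(D''(\theta,\phi) + F_0\bigr) = d_1 d_2 + \det F_0 + \tfrac{s}{2}\bigl[\cos\alpha\cdot\tr F_0 + \sin\alpha\cdot(F_{0,12} - F_{0,21})\bigr] \\
+ \tfrac{t}{2}\bigl[\cos\beta\cdot(F_{0,22} - F_{0,11}) + \sin\beta\cdot(F_{0,12} + F_{0,21})\bigr],
\end{multline*}
an \emph{additively separable} trigonometric polynomial $p(\alpha, \beta) = p_1(\alpha) + p_2(\beta)$ with each summand of degree one. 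The amplitudes are $R_\alpha^2 = \tfrac{|s|^2}{4}(|\tr F_0|^2 + |F_{0,12} - F_{0,21}|^2)$ and $R_\beta^2 = \tfrac{|t|^2}{4}(|F_{0,22} - F_{0,11}|^2 + |F_{0,12} + F_{0,21}|^2)$. The parallelogram-law identity $|\tr F|^2 + |F_{12}-F_{21}|^2 + |F_{11}-F_{22}|^2 + |F_{12}+F_{21}|^2 = 2\|F\|_\HS^2$, combined with $\min(|s|^2, |t|^2) \ge \d^2/(4K^2)$, yields $R_\alpha^2 + R_\beta^2 \ge \tfrac{\d^2}{8K^2}\|F_0\|_\HS^2$, so $\max(R_\alpha, R_\beta) \ge \d \|F_0\|_\HS/(4K)$. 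Univariate anti-concentration for a degree-one trig polynomial, applied in the variable with the larger amplitude after conditioning on the other, gives
\[
\Pr[\theta, \phi]{|\det(D''(\theta,\phi) + F_0)| \le \tau \,\big|\, U} \;\lesssim\; \sqrt{K\tau/(\d \|F_0\|_\HS)}.
\]
Combining with the Schur bound produces the desired $t^c(Kn/\d)^C$ estimate \emph{conditional on} the good event $\{\smin(E) \ge \eta_1\} \cap \{\|F_0\|_\HS \ge \eta_2\}$.

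The hard part will be ensuring the good event holds with probability $1 - O((Kn/\d)^{-C})$ for inverse-polynomial $\eta_1, \eta_2$. The bound on $\|F_0\|_\HS$ is comparatively routine: $F_0$ depends polynomially on the entries of $U$, and the joint density of the top-left $2 \times 2$ block of a Haar orthogonal gives a direct small-ball estimate. The bound on $\smin(E)$ is the genuine obstacle, because $U_{22}$ is a \emph{principal submatrix} of a Haar orthogonal rather than Haar in dimension $n-2$, so the theorem does not apply recursively to $E$. I would address this either by strengthening the inductive hypothesis to cover ``diagonal plus principal submatrix of a Haar orthogonal,'' or by proving a separate lemma that bounds $\smin$ of a minor of $D + U$ via an explicit Givens-rotation parametrisation of the conditional Haar distribution.
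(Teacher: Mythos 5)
Your algebraic observation is genuinely nice: writing $\theta-\phi=\alpha$, $\theta+\phi=\beta$ does make $\det(D''(\theta,\phi)+F_0)$ additively separable of degree one in $\alpha$ and $\beta$, and the parallelogram identity cleanly turns $|d_1^2-d_2^2|\ge\d$ into a lower bound on the combined amplitude. This is a different route from the paper's appeal, in dimension $2$ and $3$, to Remez-type inequalities for $|\det|^2$ on the torus (Theorem~\ref{thm: main low dim} and Lemma~\ref{lem: breaking orthogonality}), and in spirit it is a more explicit version of the same two-angle anti-concentration phenomenon.

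However, there is a genuine gap at precisely the point you flag, and I do not think the fixes you sketch can work. The paper never establishes that $\smin(E)$ (equivalently $\|A_{(1,2)}^{-1}\|$) is likely to be large, and there is good reason: for skew-symmetric Gaussian perturbations the $(n-2)\times(n-2)$ minor can be simultaneously poorly invertible with the full matrix when $n$ is odd, so the required inverse-polynomial high-probability bound on $\smin(E)$ is not a lemma the paper bypasses for convenience -- it is something the authors argue one should not expect to prove directly. What the paper does instead is structurally different: it injects an auxiliary Gaussian skew-symmetric perturbation $\e S$ (Lemma~\ref{lem: identity}) so that some independent randomness survives conditioning, and then runs a \emph{dichotomy} on the $(n-3)\times(n-3)$ minors $A_{(1,2,i)}$. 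When some such minor is well invertible, a Schur reduction to a $3\times3$ block is made, and two more global rotations are used; when \emph{all} of them are poorly invertible, Lemma~\ref{lem: poorly invertible minor} plus the Gaussian column of $S$ shows (going \emph{one} dimension up) that $A_{(1,2)}$ must be well invertible, after which the first branch applies. The probabilities from the two branches are added, and at no point is ``$\smin(E)$ large'' proved as a standalone high-probability event. Your plan conditions on $\{\smin(E)\ge\eta_1\}$ and needs its complement to be negligible; neither ``strengthen the inductive hypothesis to principal submatrices of Haar orthogonals'' nor a Givens-parametrisation small-ball lemma addresses the actual obstacle, and you also lose the inductive hypothesis $\max_{i,j\ge 3}|d_i^2-d_j^2|\ge\d$ on the minor, which need not hold even when it holds for $D$. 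Without the local perturbation $S$ there is no conditional independent Gaussian randomness left to run any minor-invertibility argument at all, so this is a missing idea rather than a deferred technical lemma.

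A secondary, smaller issue: the Schur bound yields a lower estimate for $\inf_{x\in S_{1,2}}\|(D+U)x\|_2$ only (cf.\ Lemma~\ref{lem: well invertible minor}), not for $\smin$ over all of $S^{n-1}$, and the power of $\eta_1$ you write is off by a factor (the lemma gives roughly $\eta_1/\sqrt{n}$, not $\eta_1^2$). To cover all of $S^{n-1}$ you must union over coordinates $i$, and for each $i$ choose a partner $j(i)$ with $|d_i^2-d_{j(i)}^2|\gtrsim\d$, as the paper does before \eqref{eq: sum pi}. This part is repairable with your framework; the first gap is not, at least not without importing the paper's local-perturbation machinery.
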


Let us show how this result implies Theorem~\ref{thm: main orth}.

\begin{proof}[Proof of Theorem~\ref{thm: main orth} from Theorem~\ref{thm: main orth full}]
Without loss of generality, we can assume that $t \le \d/2$.
Further, using rotation invariance of $U$ we can assume that
$D = \diag(d_1,\ldots,d_n)$ where all $d_i \ge 0$.
The assumptions in \eqref{eq: D} then imply that
\begin{equation}								\label{eq: D diag}
\max_i |d_i| \le K, \quad \max_i |d_i - 1| \ge \d.
\end{equation}

If $\max_{i,j} |d_i^2 - d_j^2| \ge \d^2/4$ then we can finish the proof by
applying Theorem~\ref{thm: main orth full} with $\d^2/4$ instead of $\d$.
In the remaining case we have
\[
 \max_{i,j} |d_i - d_j|^2 \le \max_{i,j} |d_i^2 - d_j^2| < \d^2/4,
\]
which implies
that $\max_{i,j} |d_i - d_j| < \d/2$.
Using \eqref{eq: D diag}, we can choose $i_0$ so that $|d_{i_0} - 1| \ge \d$.
Thus either $d_{i_0} \ge 1 + \d$ or $d_{i_0} \le 1 - \d$ holds.

If $d_{i_0} \ge 1+ \d$ then $d_i > d_{i_0} - \d/2 \ge 1 + \d/2$ for all $i$. In this case
$$
\smin(D+U) \ge \smin(D) - \|U\| > 1 + \d/2 - 1 \ge t,
$$
and the conclusion holds trivially with probability $1$.

If $d_{i_0} \le 1 - \d$ then similarly $d_i < d_{i_0} + \d/2 \le 1 - \d/2$ for all $i$.
In this case
$$
\smin(D+U) \ge \smin(U) - \|D\| > 1 - (1 - \d/2) = \d/2 \ge t,
$$
and the conclusion follows trivially again.
\end{proof}

\subsection{A word about the proofs}
%...............

The proofs of Theorems \ref{thm: main unit} and \ref{thm: main orth full} are significantly different
from those of corresponding results for random matrices with i.i.d. entries \cite{R, RV1}
and for symmetric random matrices \cite{V sym}.
The common starting point is the identity $\smin(A)=\min_{x \in S^{n-1}}\|Ax\|_2$.
The critical step of the previous arguments \cite{R, RV1, V sym} was the analysis
of the small ball probability $\Pr{\|Ax\|_2<t}$ for a fixed vector $x \in S^{n-1}$.
The decay of this probability as $t \to 0$ is determined by the {\em arithmetic structure} of the coordinates of the vector $x$.
An elaborate covering argument was used to treat the set of the vectors with a ``bad'' arithmetic structure.
In contrast to this, arithmetic structure plays no role in Theorems \ref{thm: main unit} and \ref{thm: main orth full}.
The difficulty lies elsewhere -- the entries of the matrix $D+U$ are not independent.
This motivates one to seek a way to introduce some independence into the model.
The independent variables have to be chosen in such a way that one can tractably express
the smallest singular value in terms of them. We give an overview of this procedure
in Section~\ref{s: strategy} below.

\medskip

The proof of Theorem~\ref{thm: main orth full} is harder than that of Theorem~\ref{thm: main unit}.
To make the arguments more transparent, the proofs of the two theorems are organized in such a way
that they are essentially self-contained and independent of each other. The reader is encouraged to
start from the proof of Theorem~\ref{thm: main unit}.

\subsection{An application to the Single Ring Theorem}			\label{s: single ring}
%.................................

The invertibility problem studied in this paper was motivated by a limit law of the random matrix theory,
namely the Single Ring Theorem. This is a result about the eigenvalues of random matrices with prescribed singular values.
The problem was studied by Feinberg and Zee \cite{FZ} on the physical level of rigor,
and mathematically by Guionnet, Krishnapur, and Zeitouni \cite{GKZ}.
Let $D_n=\text{diag}(d_1^{(n)}, \ldots, d_n^{(n)})$ be an $n \times n$ diagonal matrix with non-negative diagonal.
If we choose $U_n$, $V_n$ to be independent, random and uniformly distributed in $U(n)$ or $O(n)$,
then $A_n = U_n D_n V_n$ constitutes the most natural model of a {\em random matrix with prescribed singular values}.
The matrices $D_n$ can be deterministic or random; in the latter case we assume that $U_n$ and $V_n$ are independent of $D_n$.

The Single Ring Theorem \cite{GKZ} describes the typical behavior of the eigenvalues of $A_n$ as $n \to \infty$.
To state this result, we consider the empirical measures of the singular values and the eigenvalues of $A_n$:
\[
  \mu_s^{(n)} := \frac{1}{n} \sum_{j=1}^n \d_{d_j^{(n)}},
  \quad
  \mu_e^{(n)} := \frac{1}{n} \sum_{j=1}^n \d_{\l_j^{(n)}}
\]
where $\d_x$ stands for the $\d$-measure at $x$,
and $\l_1^{(n)}, \ldots, \l_n^{(n)}$ denote the eigenvalues of $A_n$.
Assume that the measures $\mu_s^{(n)}$ converge weakly in probability
to a measure $\mu_s$ compactly supported in $[0,\infty)$.
The Single Ring Theorem \cite{GKZ} states that, under certain conditions,
the empirical measures of the eigenvalues $\mu_e^{(n)}$ converge in probability
to an absolutely continuous rotationally symmetric probability measure $\mu_e$ on $\C$.
Haagerup and Larsen \cite{HL} previously computed the density of $\mu_e$ 
in terms of $\mu_s$ in the context of operator algebras.

In the formulation of this result,
$\s_n(z) := s_n(A_n-zI_n)$ denotes the smallest singular value of the shifted matrix, and
$S_{\mu}$ denotes the Stieltjes transform of a Borel measure $\mu$ on $\R$:
\[
  S_{\mu}(z)= \int_{\R} \frac{d \mu(x)}{z-x}.
\]

\begin{srt} \cite{GKZ}
  Assume that the sequence $\{\mu_s^{(n)} \}_{n=1}^{\infty}$ converges weakly
  to a probability measure $\mu_s$ compactly supported on $\R_+$.
  Assume further:
  \begin{enumerate}[(SR1)]
    \item There exists $M>0$ such that $\Pr{\|D_n\|>M} \to 0$ as $n \to \infty$;
    \item There exist constants $\k, \k_1>0$ such that for any $z \in \C, \ \im(z)> n^{-\kappa}$,
        \[
        \big| \im(S_{\mu_s^{(n)}}(z)) \big| \le \k_1.
        \]
     \item
     There exists a sequence of events $\Omega_n$ with $\P(\Omega_n) \to 1$
     and constants $\d, \d'>0$ such that for Lebesgue almost any $z \in  \C$,
         \[
         \E \big[ \mathbf{1}_{\Omega_n} \mathbf{1}_{\s_n(z)< n^{-\d}} \log^2 \s_n(z) \big] \le \d'.
         \]
  \end{enumerate}
  Then the sequence $\{\mu_e^{(n)} \}_{n=1}^{\infty}$ converges in probability to a probability measure $\mu_e$.
  The measure $\mu_e$ has density, which can be explicitly calculated in terms
  of the measure $\mu_s$\footnote{See \cite[Theorem 4.4]{HL} and \cite[Theorem 1]{GKZ}
  for a precise description of $\mu_e$.}, and whose support coincides with
  a single ring $\{z \in \C: a \le |z| \le b \}$ for some $0 \le a < b < \infty$.
\end{srt}

The explicit formula for the density of the measure $\mu_e$ shows that is strictly positive in the interior of the ring.
This is surprising since the support of the measure $\mu_s$ can have  gaps. Informally, this means that there are no forbidden zones for the eigenvalues, even in the case when there are such zones for singular values.

The inner and outer radii of the ring can be easily calculated \cite{GZ}:
\begin{equation}    \label{eq: radii of the ring}
  a=\left ( \int_0^{\infty} x^{-2} \, d \mu_s(x) \right)^{-1/2}, \quad
  b=\left ( \int_0^{\infty} x^{2} \, d \mu_s(x) \right)^{1/2}.
\end{equation}

\medskip

The first two conditions of the Single Ring Theorem are effectively checkable for a given sequence $d_1^{(n)}, \ldots, d_n^{(n)}$. Indeed, condition (SR1) is readily reformulated in terms of this sequence, since
$\|D_n\|= s_1(D_n)= \max (d_1^{(n)}, \ldots, d_n^{(n)})$.
Condition (SR2) is already formulated in terms of this sequence;
it means that the singular values of the matrices $D_n$ cannot concentrate on short intervals.

Since the relation between the singular values of the original and shifted matrices is not clear,
condition~(SR3) is much harder to check.
It has only been verified in \cite{GKZ} for the original setup of Feinberg and Zee \cite{FZ},
namely for the case when the singular values of $A_n$ are random variables with density
 \[
   f(d_1^{(n)}, \ldots, d_n^{(n)})
   \sim \prod_{j < k} | (d_j^{(n)})^2- (d_k^{(n)})^2|^\beta
    \cdot \exp \Big( -\sum_{j=1}^n P \big(  (d_j^{(n)})^2 \big) \Big)
    \cdot \left( \prod_{j =1}^n d_j^{(n)} \right)^{\beta-1},
 \]
 where $P$ is a polynomial with positive leading coefficient,
 and where $\beta=1$ in the real case and $\beta=2$ in the complex case.
 The proof of condition~(SR3) for this model is based on adding small Gaussian noise
 to the matrix $A_n$ and using coupling to compare
 the eigenvalue distributions of random matrices with and without noise.
 Such approach does not seem to be extendable to more general distributions of singular values.

\medskip

As an application of Theorems \ref{thm: main unit} and \ref{thm: main orth full}, one can show the following:

\begin{corollary}  \label{cor: single ring}
  Condition~(SR3) can always be eliminated from the Single Ring Theorem.
\end{corollary}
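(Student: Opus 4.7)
The plan is to verify that condition (SR3) follows automatically from (SR1) and (SR2), so that it may be removed from the hypotheses. The crucial observation is that $\sigma_n(z)=s_n(A_n-zI_n)$ is the smallest singular value of a fixed matrix perturbed by a Haar random unitary/orthogonal matrix: since singular values are invariant under left and right unitary/orthogonal multiplication,
\[
\sigma_n(z) = s_n\bigl(U_n^*(A_n-zI_n)V_n^*\bigr) = s_n(D_n + zW_n), \qquad W_n := -U_n^* V_n^*,
\]
and by translation-invariance of the Haar measure, $W_n$ is itself Haar distributed on $U(n)$ (resp.\ $O(n)$) and independent of $D_n$. For every $z\ne 0$ this gives $\sigma_n(z) = |z|\cdot s_n(W_n + D_n/z)$, so the task reduces to a small-ball estimate for a unitary/orthogonal perturbation of the fixed, possibly complex, diagonal matrix $D_n/z$.

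Next I would set up a high-probability event $\Omega_n$ on which Theorem~\ref{thm: main unit} or Theorem~\ref{thm: main orth full} applies with good parameters. Take
\[
\Omega_n := \{\|D_n\|\le M\} \cap \bigl\{\max_{i,j}\bigl|(d_i^{(n)})^2 - (d_j^{(n)})^2\bigr| \ge \delta_0\bigr\}
\]
for suitable constants $M,\delta_0>0$ depending only on the constants in (SR1)--(SR2). The first event has probability tending to $1$ by (SR1). For the spacing: if all $d_i^{(n)}$ lay in an interval $[a,a+\eta]$, then at $z=(a+\eta/2)+i\eta$ one has $|\im S_{\mu_s^{(n)}}(z)|\gtrsim 1/\eta$, which contradicts (SR2) once $\eta>n^{-\kappa}$ and $\eta$ is below the constant threshold $\sim 1/\kappa_1$. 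Choosing $\eta=:\eta_0$ to be a small positive constant therefore forces a pair $i,j$ with $|d_i^{(n)}-d_j^{(n)}|\ge\eta_0$, and nonnegativity of $d_i^{(n)}$ yields $|d_i^2-d_j^2|=|d_i-d_j|(d_i+d_j)\ge(d_i-d_j)^2\ge\eta_0^2=:\delta_0$. Hence $\mathbb{P}(\Omega_n)\to 1$.

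On $\Omega_n$ the matrix $D_n/z$ is a fixed complex diagonal matrix satisfying $\max_i|d_i/z|\le M/|z|$ and $\max_{i,j}|(d_i/z)^2-(d_j/z)^2|\ge\delta_0/|z|^2$. In the complex case Theorem~\ref{thm: main unit} applies directly; in the real case Theorem~\ref{thm: main orth full} applies with $K:=M/|z|$ and $\delta:=\delta_0/|z|^2$. In either case, conditioning on $D_n\in\Omega_n$ and using the identity above gives
\[
\mathbb{P}\bigl\{\sigma_n(z)\le t \bigm| D_n\bigr\}\,\mathbf{1}_{\Omega_n} \le t^c\, C(z)\, n^C,
\]
where $C(z)$ depends only on $|z|$, $M$, $\delta_0$ and universal constants; taking expectation gives the same bound for $\mathbb{P}\{\sigma_n(z)\le t,\,\Omega_n\}$.

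Finally I would convert this small-ball estimate into the logarithmic-integral bound in (SR3). Setting $F_n(t):=\mathbb{P}\{\sigma_n(z)\le t,\,\Omega_n\}\le C(z) n^C t^c$, the tail formula yields
\[
\E\bigl[\mathbf{1}_{\Omega_n}\mathbf{1}_{\sigma_n(z)<n^{-\delta}}\log^2\sigma_n(z)\bigr] = \int_0^{n^{-\delta}}\log^2(1/t)\,dF_n(t).
\]
For $\delta$ chosen larger than $C/c$, the boundary term $\log^2(n^\delta)\,F_n(n^{-\delta})=O(n^{-c\delta+C}\log^2 n)$ vanishes, and integration by parts bounds the remainder by $\lesssim C(z)\int_0^1 t^{c-1}|\log t|\,dt<\infty$. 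The left-hand side is thus bounded by a constant $\delta'$ depending only on $z$, verifying (SR3) for Lebesgue-a.e.\ $z\in\C$. The main obstacle is the spacing estimate $\max_{i,j}|d_i^2-d_j^2|\ge\delta_0$ on a high-probability event, which is precisely where hypothesis (SR2) is used; a subsidiary but essential point is that $D_n/z$ is a \emph{complex} diagonal matrix even when $D_n$ is real, which is why the complex-diagonal version Theorem~\ref{thm: main orth full}, rather than Theorem~\ref{thm: main orth}, is indispensable in the real case.
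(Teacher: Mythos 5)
Your plan --- prove that (SR3) follows outright from (SR1)--(SR2) via a $z$-uniform small-ball estimate --- cannot be carried out, and it misses the central difficulty the paper had to circumvent. The bound you derive is
\[
\P\{\sigma_n(z)\le t,\ \Omega_n\}\le C(z)\,n^C t^c,
\]
where $C(z)$ necessarily blows up as $|z|\to 0$: since $\sigma_n(z)=|z|\,s_n(W_n+D_n/z)$, applying Theorem~\ref{thm: main unit} or \ref{thm: main orth full} costs you a factor $|z|^{-c}$ (in the complex case) and additionally a factor from $K=M/|z|$ (in the real case). Consequently the $\delta'$ you extract at the end is $z$-dependent and unbounded near $z=0$, whereas (SR3) demands constants $\delta,\delta'$ independent of $z$. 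In fact (SR3) \emph{cannot} hold near $z=0$ in general: at $z=0$, $\sigma_n(0)=\smin(U_nD_nV_n)=\smin(D_n)$ is deterministic, so if $\inf_n\smin(D_n)=0$ the small-ball estimate fails identically, and no argument using only (SR1)--(SR2) can rescue it.

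The paper therefore splits into two cases. When $r:=\inf_n\smin(D_n)>0$, the disc $|z|<r/2$ is handled by the trivial deterministic bound $\sigma_n(z)\ge r-|z|>r/2$ and your small-ball estimate takes over for $|z|\ge r/2$; this does give a $z$-uniform version of (SR3). But when $\inf_n\smin(D_n)=0$, (SR3) may genuinely fail near $z=0$, and the paper instead shows that condition (SR3) can be \emph{removed from the proof} of the Single Ring Theorem rather than verified. This requires going inside \cite[Proposition 14]{GKZ}: part~(i) of that proposition is pointwise in $z$ and follows from the main theorems as you describe, but part~(ii) is an integrated statement that the paper establishes by a compactness/weak-convergence argument (tightness of $\mu_e^{(n)}$, extraction of a weakly convergent subsequence, identification of the limit with the absolutely continuous measure $\tilde\mu$ using test functions supported away from the origin, and then a cutoff near $z=0$ exploiting the absolute continuity of $\tilde\mu$). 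Your proposal contains none of this; it stops at the point where the argument is essentially forced to change strategy. That is the genuine gap.

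A smaller point: your derivation of the spacing condition from (SR2) (via the lower bound on $|\im S_{\mu_s^{(n)}}|$ when the $d_i^{(n)}$ cluster in a short interval) is a clean alternative to the paper's citation of \cite[Lemma 15]{GKZ} and is correct; it is not, however, where the difficulty lies.
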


The remaining conditions~(SR1) and (SR2)
are formulated in terms of the singular values of the original matrix $D_n$.
This means that the validity of the Single Ring Theorem for a concrete sequence of matrices $D_n$
can now be effectively checked.

\subsection{Organization of the paper}
%..............

The rest of this paper is organized as follows.
In Section~\ref{s: strategy} we give an overview and heuristics of the proofs of both main results.
Theorem~\ref{thm: main unit} is proved in Section~\ref{s: proof unit}.
Theorem~\ref{thm: main orth full} is proved in Section~\ref{s: proof orth} with the exception of 
the low dimensions $n=2,3$ that are treated separately in Appendix~\ref{a: main low dim};
some standard tools in the proof of Theorem~\ref{thm: main orth full} 
are isolated in Appendix~\ref{a: tools}.
In Section~\ref{s: single ring theorem} we prove Corollary~\ref{cor: single ring} concerning the Single Ring Theorem.

\subsection{Notation}
%...............

We will use the following notation.

Positive absolute constant are denoted $C, C_1, c, c_1, \ldots$; their values may be different in different instances.
The notation $a \lesssim b$ means $a \le C b$ where $C$ is an absolute constant; similarly for $a \gtrsim b$.

The intervals of integers are denoted by
$[n] := \{1, 2, \ldots, n\}$ and $[k:n] := \{k, k+1,\ldots, n\}$ for $k \le n$.

Given a matrix $A$, the minor obtained by removing the first, second and fifth rows and columns of $A$
is denoted $A_{(1,2,5)}$; similarly for other subsets of rows and columns.

The identity matrix on $\R^n$ and $\C^n$ is denoted $I_n$; we often simply write $I$
if the ambient dimension is clear.

Since we will be working with several sources of randomness at the same time,
we denote by $\P_{X,Y} (\EE)$ the conditional probability of the event $\EE$
given all random variables except $X, Y$.

The operator norm of a matrix $A$ is denoted $\|A\|$,
and the Hilbert-Schmidt (Frobenius) norm is denoted $\|A\|_\HS$.

The diagonal matrix with diagonal entries $d_1, \ldots, d_n$ is denoted $\diag(d_1,\ldots,d_n)$.

Finally, without loss of generality we may assume in Theorems~\ref{thm: main unit} and \ref{thm: main orth full}
that $t < c \d$ for an arbitrarily small absolute const $c>0$.

\subsection*{Acknowledgement} The authors are grateful for Ofer~Zeitouni for drawing their attention
to this problem, and for many useful discussions and comments.
The second author learned about the problem at the IMA Workshop oh High Dimensional Phenomena
in September 2011; he is grateful to IMA for the hospitality.
The authors are grateful to Anirban Basak who found an inaccuracy in the earlier version 
of this paper, specifically in the application of Theorem~\ref{thm: main orth} to the Single Ring Theorem 
over reals. Amir Dembo communicated this to the authors, for which they are thankful.

\section{Strategy of the proofs}					\label{s: strategy}
%--------

Let us present the heuristics of the proofs of Theorems~\ref{thm: main unit} and \ref{thm: main orth full}.
Both proofs are based on the idea to use local and global structures of the Lie groups $U(n)$ and $O(n)$,
but the argument for $O(n)$ is more difficult.

\subsection{Unitary perturbations}
%...............

Our proof of Theorems~\ref{thm: main unit} uses both global and local structures of the Lie group $U(n)$.
The local structure is determined by the infinitesimally small perturbations of the identity in $U(n)$,
which are given by skew-Hermitian matrices.
This allows us to essentially replace $D+U$ (up to $O(\e^2)$ error) by
$$
VD + I  + \e S
$$
where $V$ is random matrix uniformly distributed in $U(n)$,
$S$ is an independent skew-Hermitian matrix, and $\e>0$ is a small number.
The distribution of $S$ can be arbitrary. For example, one 
may choose $S$ to have independent normal above-diagonal entries. (In the actual proof, 
  we populate just one row and column of $S$  by random variables leaving the 
  other entries zero, see \eqref{eq: S}.)
After conditioning on $V$, we are left with a random matrix with a lot of independent entries --
the quality that was missing from the original problem.

However, this local argument is not powerful enough, in particular because real
skew-Hermitian (i.e. skew-symmetric) matrices themselves are singular in odd dimensions $n$.
This forces us to use some global structure of $U(n)$ as well.
A simplest random global rotation is a random complex rotation $R$ in one coordinate in $\C^n$ (given by
multiplication of that coordinate by a random unit complex number).
Thus we can essentially replace $D+U$ by
$$
A = RVD + I + \e S,
$$
and we again condition on $V$.
A combination of the two sources of randomness,
a local perturbation $S$ and a global perturbation $R$,
produces enough power to conclude that $A$ is typically well invertible,
which leads to Theorem~\ref{thm: main unit}.

The formal proof of Theorem~\ref{thm: main unit} is presented in Section~\ref{s: proof unit}.

\subsection{Orthogonal perturbations}
%....................

Our proof of Theorem~\ref{thm: main orth full} will also make use of both global and local structures
of the Lie group $O(n)$.
The local structure is determined by the skew-symmetric matrices.
As before, we can use it to
replace $D+U$ by $VD + I  + \e S$ where $V$ is random matrix uniformly distributed in $O(n)$
and $S$ is a random independent Gaussian skew-symmetric
matrix (with i.i.d. $N_\R(0,1)$ above-diagonal entries).

Regarding the global structure, the simplest random global rotation in $O(n)$ is a random rotation $R$
of some {\em two} coordinates in $\R^n$, say the first two.
Still, $R$ alone does not appear to be powerful enough, so we supplement it with
a further {\em random change of basis}. Specifically, we replace $D$
with $\til{D} = QDQ^T$ where $Q$ is a random independent rotation of the first two coordinates.
Overall, we have changed $D+U$ to
$$
\til{A} = R V \til{D} + I + \e S, \quad \text{where} \quad \til{D} = QDQ^T.
$$
Only now do we condition on $V$, and we will work with three sources of randomness --
a local perturbation given by $S$ and two global perturbations given by $R$ and $Q$.

\subsubsection{Decomposition of the problem}
%~~~~~~~~~~
By rotation invariance, we can assume that $D$ is diagonal, thus
$D = \diag(d_1,\ldots,d_n)$.
By assumption, $d_i^2$ and $d_j^2$ are not close to each other for some pair of indices $i,j$;
without loss of generality we can assume that $d_1^2$ and $d_2^2$ are not close to each other.
Recall that our task is to show that
\begin{equation}				\label{eq: inf Ax}
\smin(\til{A}) = \inf_{x \in S^{n-1}} \|\til{A}x\|_2 \gtrsim \e
\end{equation}
with high probability. (In this informal presentation, we suppress the dependence on $n$; it should always be polynomial).
Each $x \in S^{n-1}$ has a coordinate whose magnitude is at least $n^{-1/2}$.
By decomposing the sphere according to which coordinate is large, without loss of generality
we can replace our task \eqref{eq: inf Ax} by showing that
$$
\inf_{x \in S_{1,2}} \|\til{A}x\|_2 \gtrsim \e
$$
where $S_{1,2}$ consists of the vectors $x \in S^{n-1}$ with $|x_1|^2 + |x_2|^2 \ge 1/n$.

In order to use the rotations $R$, $Q$ which act on the first two coordinates, we decompose $\til{A}$ as follows:
\begin{equation}				\label{eq: Atilde decomposed}
\til{A} =
\begin{bmatrix}
A_0 & Y \\
X  & A_{(1,2)}
\end{bmatrix},
\quad \text{where }
A_0 \in \C^{2 \times 2}, \; A_{(1,2)} \in \C^{(n-2) \times (n-2)}.
\end{equation}
We condition on everything except $Q$, $R$ and the first two rows and columns of $S$.
This fixes the minor $A_{(1,2)}$. We will proceed differently depending on whether $A_{(1,2)}$ is well invertible or not.

\subsubsection{When the minor is well invertible}				\label{s: intro well invertible}
%....................

Let us assume that
$$
\|M\| \lesssim \frac{1}{\e}, \quad \text{where} \quad M := (A_{(1,2)})^{-1}.
$$
It is not difficult to show (see Lemma~\ref{lem: well invertible minor}) that in this case
$$
\inf_{x \in S_{1,2}} \|\til{A}x\|_2 \gtrsim \e \cdot \smin(A_0 - Y M X).
$$
So our task becomes to prove that
$$
\smin(A_0 - Y M X) \gtrsim 1.
$$
We have reduced our problem to invertibility of a $2 \times 2$ random matrix.

The argument in this case will only rely on the global perturbations $Q$ and $R$
and will not use the local perturbation $S$. So let us assume for simplicity that $S=0$,
although removing $S$ will take some effort in the formal argument.
Expressing the matrix $A_0 - Y M X$ as a function of $R$, we see that
$$
A_0 - Y M X = I + R_0 B
$$
where $B \in \C^{2 \times 2}$ and $R_0 \in O(2)$ is the part of $R$ 
restricted to the first two coordinates (recall that $R$ is identity on the other coordinates).

Note that $I + R_0 B$ has the same distribution as $R_0^{-1} + B$ and $R_0$ is uniformly distributed in $O(2)$.
But invertibility of the latter matrix is the same problem as we are studying in this paper, only in dimension two.
One can prove Theorem~\ref{thm: main orth full} in dimension two (and even for non-diagonal matrices)
by a separate argument based on Remez-type inequalities; see Appendix~\ref{a: main low dim}.
It yields that unless $B$ is approximately {\em complex orthogonal}, i.e. $\|BB^\tran- I\| \ll \|B\|^2$,
the random matrix $I + R_0 B$ is well invertible with high probability in $R_0$,
leading to the desired conclusion. We have thus reduced the problem to showing that $B$
is not approximately complex orthogonal.

To this end we use the remaining source of randomness, the random rotation $Q$. Expressing $b$ as a function of $Q$,
we see that
$$
B = T \til{D}_0
$$
where $T \in \C^{2 \times 2}$ is a fixed matrix, $\til{D}_0 = Q_0 D_0 Q_0^\tran$,
and $Q_0, D_0$ are the $2 \times 2$ minors of $Q$ and $D$ respectively.
Thus $Q_0$ is a random rotation in $SO(2)$ and $D_0 = \diag(d_1, d_2)$.

Now we recall our assumption that $d_1^2$ and $d_2^2$ are not close to each other.
It is fairly easy to show for such $D_0$ that, whatever the matrix $T$ is,
the random matrix $B = T \til{D}_0 = T Q_0 D_0 Q_0^\tran$
is not approximately complex orthogonal with high probability in $Q_0$ (see Lemma~\ref{lem: breaking orthogonality}).
This concludes the argument in this case.
The formal analysis is presented in Section~\ref{s: well invertible}.

\subsubsection{When the minor is poorly invertible}						 \label{s: intro poorly invertible}
%....................

The remaining case is when
$$
\|M\| \gg \frac{1}{\e}, \quad \text{where} \quad M := (A_{(1,2)})^{-1}.
$$
We will only use the local perturbation $S$ in this case.

Here we encounter a new problem. Imagine for a moment that the were working with decompositions
into dimensions $1 + (n-1)$ rather than $2 + (n-2)$, thus in \eqref{eq: Atilde decomposed} we had
$A_0 \in \C^{1 \times 1}$,  $A_{(1,2)} \in \C^{(n-1) \times (n-1)}$.
Using the Gaussian random vector $X$,
one could quickly show (see Lemma~\ref{lem: poorly invertible minor}) that in this case
\begin{equation}				\label{eq: intro on S1}
\inf_{x \in S_1} \|\til{A}x\|_2 \gtrsim \e
\end{equation}
with high probability,
where $S_{1}$ consists of the vectors $x \in S^{n-1}$ with $|x_1| \ge n^{-1/2}$.

\medskip

Unfortunately, this kind of argument fails for decompositions into dimensions $2 + (n-2)$
which we are working with.
In other words, we can step {\em one} rather than two dimensions up --
from a poor invertibility of an $(n-1) \times (n-1)$ minor to the good invertibility of the $n \times n$
matrix (on vectors with the large corresponding coordinate). The failure of stepping two dimensions up has a good reason.
Indeed, one can show that Gaussian skew symmetric matrices
are well invertible in even dimensions $n$ and singular in odd dimensions $n$.
Since our argument in the current case only relies on the local perturbation given by a Gaussian skew symmetric matrix, nothing seems to prevent both the $(n-2) \times (n-2)$ minor
and the full $n \times n$ matrix to be poorly invertible if $n$ is odd.

To circumvent this difficulty, we shall redefine the two cases that we have worked with, as follows.

\smallskip

{\bf Case 1:} There exists an $(n-3) \times (n-3)$ minor $A_{(1,2,i)}$ of $A_{(1,2)}$ which is well invertible.
In this case one proceeds by the same argument as in Section~\ref{s: intro well invertible}, but for the decomposition into
dimensions $3 + (n-3)$ rather than $2 + (n-2)$.
The formal argument is presented in Section~\ref{s: well invertible}.

\smallskip

{\bf Case 2:} All $(n-3) \times (n-3)$ minors $A_{(1,2,i)}$ of $A_{(1,2)}$ are poorly invertible.
Let us fix $i$ and apply the reasoning described above, which allows us one to move {\em one} dimension up,
this time from $n-3$ to $n-2$. We conclude that $A_{(1,2)}$ is well invertible on the vectors whose $i$-th coordinate
is large. Doing this for each $i$ and recalling that each vector has at least one large coordinate, we
conclude that $A_{(1,2)}$ is well invertible on all vectors. Now we are in the same situation that we have already analyzed
in Section~\ref{s: intro well invertible}, as the minor $A_{(1,2)}$ is well invertible.
So we proceed by the same argument as there.
The formal analysis of this case is presented in Section~\ref{s: poorly invertible}.

\smallskip

Summarizing, in Case~1 we move three dimensions up, from $n-3$ to $n$, in one step. In Case~2 we make two steps,
first moving one dimension up (from poor invertibility in dimension $n-3$ to good invertibility in dimension $n-2$),
then two dimensions up (from good invertibility in dimension $n-2$ to good invertibility in dimension $n$).

\medskip

This concludes the informal presentation of the proof of Theorem~\ref{thm: main orth full}.

\section{Unitary perturbations: proof of Theorem~\ref{thm: main unit}}    		 \label{s: proof unit}
%---------------------

In this section we give a formal proof of Theorem~\ref{thm: main unit}.

\subsection{Decomposition of the problem; local and global perturbations}	 \label{s: local global}	
%.................

\subsubsection{Decomposition of the sphere}
%~~~~~~~~~~~

By definition, we have
$$
\smin(D+U) = \inf_{x \in S^{n-1}} \|(D+U)x\|_2.
$$
Since for every $x \in S^{n-1}$ there exists a coordinate $i \in [n]$ such that
$|x_i| \ge 1/\sqrt{n}$, a union bound yields
\begin{equation}				\label{eq: smin via Si}
\Pr{\smin(D+U) \le t}
\le \sum_{i=1}^n \Pr{\inf_{x \in S_i} \|(D+U)x\|_2 \le t}
\end{equation}
where
$$
S_i = \left\{ x \in S^{n-1} :\; |x_i| \ge 1/\sqrt{n} \right\}.
$$
So, without loss of generality, our goal is to bound
\begin{equation}							\label{eq: S1}
\Pr{\inf_{x \in S_1} \|(D+U)x\|_2 \le t}.
\end{equation}

\subsubsection{Introducing local and global perturbations}
%~~~~~~~~~~~

We can express $U$ in distribution as
\begin{equation}					\label{U}
U = V^{-1} R^{-1} W
\end{equation}
where $V, R, W \in U(n)$ are random independent matrices, such that
$V$ is uniformly distributed in $U(n)$ while $R$ and $W$ may have arbitrary distributions.
In a moment, we shall choose $R$ as a random diagonal matrix (a ``global perturbation''),
$W$ as a small perturbation of identity with a random skew-Hermitian matrix
(a ``local perturbation''), and we shall then condition on $V$.

So we let $V$ be uniform in $U(n)$ and let
$$
R = \diag(r, 1, \ldots, 1),
$$
where $r$ is a random variable uniformly distributed on the unit torus $\T \subset \C$.
Finally, $W$ will be defined with the help of the following standard lemma.
It expresses quantitatively the local structure of the unitary group $U(n)$,
namely that the tangent space to $U(n)$ at the identity matrix
is given by the skew-Hermitian matrices.

\begin{lemma}[Perturbations of identity in $U(n)$]				\label{lem: identity}
  Let $S$ be an $n \times n$ skew-Hermitian matrix (i.e. $S^* = -S$), let $\e > 0$ and define
  $$
  W_0 = I + \e S.
  $$
  Then there exists $W \in U(n)$ which depends only on $W_0$ and such that
  $$
  \|W - W_0\| \le 2 \e^2 \|S^2\|     \quad \text{whenever} \quad \e^2 \|S^2\| \le 1/4.
  $$
\end{lemma}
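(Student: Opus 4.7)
The plan is to produce $W$ from $W_0$ canonically via the polar decomposition; specifically, I would set
\[
W = W_0 (W_0^* W_0)^{-1/2},
\]
which is the unitary factor in the polar decomposition of $W_0$, hence depends only on $W_0$ and lies in $U(n)$ as soon as $W_0^* W_0$ is invertible. This is the standard way to ``round'' a near-unitary matrix to an exact unitary, and I expect the estimate on $\|W - W_0\|$ to fall out of elementary functional calculus.

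First I would use the skew-Hermitian assumption $S^* = -S$ to compute
\[
W_0^* W_0 = (I - \e S)(I + \e S) = I - \e^2 S^2 = I + B, \quad \text{where } B := -\e^2 S^2.
\]
Since $S^* S = -S^2$, the matrix $B = \e^2 S^* S$ is positive semidefinite, and by hypothesis $\|B\| = \e^2 \|S^2\| \le 1/4$. In particular $I + B$ is invertible and the polar factor $W$ above is well defined and unitary, as verified by $W^*W = (I+B)^{-1/2}(I+B)(I+B)^{-1/2} = I$.

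Next I would estimate
\[
\|W - W_0\| = \bigl\|W_0 \bigl((I+B)^{-1/2} - I\bigr)\bigr\| \le \|W_0\| \cdot \bigl\|(I+B)^{-1/2} - I\bigr\|.
\]
For the operator norm of $W_0$, note that $S$ is normal, so $\|S\| = \|S^2\|^{1/2}$, giving $\e \|S\| \le 1/2$ and hence $\|W_0\| \le 3/2$. For the other factor, I would diagonalize $B$ and use the scalar inequality $0 \le 1 - (1+\lambda)^{-1/2} \le \lambda$, valid for $\lambda \in [0,1/4]$, applied via functional calculus to conclude $\|(I+B)^{-1/2} - I\| \le \|B\|$. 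Combining these bounds yields
\[
\|W - W_0\| \le \tfrac{3}{2}\|B\| = \tfrac{3}{2} \e^2 \|S^2\| \le 2 \e^2 \|S^2\|,
\]
as required.

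There is no serious obstacle here; the argument is purely bookkeeping of constants, the only mild point being the verification that $W_0^* W_0 = I - \e^2 S^2$ is positive and bounded above by $(5/4) I$, so that the inverse square root is defined and well-controlled by the scalar inequality above.
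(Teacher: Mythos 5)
Your proof is correct and constructs the same $W$ as the paper: the paper takes the SVD $W_0 = U_0 \Sigma V_0$ and sets $W = U_0 V_0$, which is exactly the unitary polar factor $W_0(W_0^*W_0)^{-1/2}$ you use. The estimation step differs slightly. The paper writes $W - W_0 = U_0(I - \Sigma)V_0$ and exploits unitary invariance to get $\|W - W_0\| = \|\Sigma - I\|$ with no extraneous factor, then passes from $\|\Sigma^2 - I\| \le \e^2\|S^2\|$ to $\|\Sigma - I\| \le 2\e^2\|S^2\|$ using that $\Sigma$ is diagonal with entries near $1$. You instead factor $W - W_0 = W_0\bigl((I+B)^{-1/2}-I\bigr)$ and bound each factor by submultiplicativity, which costs you a factor of $\|W_0\| \le 3/2$ but is recovered by the sharper scalar estimate $1 - (1+\lambda)^{-1/2} \le \lambda$ (in fact $\le \lambda/2$). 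Both land under the stated constant $2$; the paper's route is marginally cleaner because the unitaries cancel, while yours is a standard functional-calculus argument that avoids invoking the SVD explicitly. One small remark: since $B = \e^2 S^*S \ge 0$, the polar factor is defined with no size restriction on $\e^2\|S^2\|$, and the scalar inequality holds for all $\lambda \ge 0$, so the only place the hypothesis $\e^2\|S^2\|\le 1/4$ is actually needed in your argument is the bound $\|W_0\| \le 3/2$.
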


\begin{proof}
We write the singular value decomposition $W_0 = U_0 \Sigma V_0$
where $U_0, V_0 \in U(n)$ and $\Sigma$ is diagonal with non-negative entries,
and we define $W := U_0 V_0$.
Since $S$ is skew-Hermitian, we see that
$$
W_0^*W_0 = (I + \e S)^*(I + \e S) = I - \e^2 S^2,
$$
so  $W_0^* W_0 - I = \e^2 S^2$. On the other hand,
the singular value decomposition of $W_0$ yields
$W_0^* W_0 - I = V_0^*(\Sigma^2 - I)V_0$.
Combining these we obtain
$$
\|\Sigma^2 - I\| \le \e^2 \|S^2\|.
$$
Assuming $\e^2 \|S^2\| \le 1/4$ and recalling that $\Sigma$ is a diagonal matrix with non-negative entries
we conclude that
$$
\|\Sigma - I\| \le 2\e^2 \|S^2\|.
$$
It follows that
$$
\|W - W_0\| = \|U_0(I-\Sigma)V_0\| = \|I-\Sigma\| \le 2\e^2 \|S^2\|,
$$
as claimed.
\end{proof}

Now we define the random skew-Hermitian matrix as
\begin{equation}				\label{eq: S}
S =
\begin{bmatrix}
  \sqrt{-1}\, s & -Z^\tran \\
  Z & 0
\end{bmatrix}
\end{equation}
where $s \sim N_\R(0,1)$ and $Z \sim N_\R(0, I_{n-1})$ are independent standard normal random
variable and vector respectively. Clearly, $S$ is skew-Hermitian.

Let $\e \in (0,1)$ be an arbitrary small number. We define $W_0$ and
$W$ as in Lemma~\ref{lem: identity}, and finally we recall that
a random uniform $U$ is represented as in \eqref{U}.

\subsubsection{Replacing $D+U$ by $RVD + I + \e S$}
%~~~~~~~~~~

Let us rewrite the quantity to be estimated \eqref{eq: S1} in terms of the global and local perturbations.
Applying Lemma~\ref{lem: identity} for the random matrix $W_0 = I + \e S$,
we obtain a random matrix $W \in U(n)$, which satisfies the following for every $x \in S_1$:
\begin{align*}
\|(D+U)x\|_2
  &= \|(D + V^{-1} R^{-1} W) x\|_2
  = \|(RVD + W)x\|_2 \\
  &\ge \|(RVD + W_0)x\|_2 - \|W - W_0\| \\
  &\ge \|(RVD + I + \e S)x\|_2 - 2 \e^2 \|S^2\| \quad \text{whenever } \e^2 \|S^2\| \le 1/4.
\end{align*}
Further, $\E \|S\|^2 \le s^2 + 2 \|Z\|_2^2 = 2n-1$, so $\|S\| = O(\sqrt{n})$ with high probability.
More precisely, let $K_0 > 1$ be a parameter to be chosen later, and which satisfies
\begin{equation}							\label{eq: K0 real}
  \e^2 K_0^2 n \le 1/4.
\end{equation}
Consider the event
\begin{equation}							\label{eq: EES}
\EE_S = \big\{ \|S\| \le K_0 \sqrt{n} \big\}; \quad \text{then } \P(\EE_S^c) \le 2 \exp(-c K_0^2 n)
\end{equation}
by a standard large deviation inequality (see e.g. \cite[Corollary 5.17]{V tutorial}).
On $\EE_S$, one has $\e^2 \|S^2\| \le 1/4$ due to \eqref{eq: K0 real}, and thus
$$
\|(D+U)x\|_2 \ge \|(RVD + I + \e S)x\|_2 - 2 \e^2 K_0^2 n.
$$

Denote
$$
A := RVD + I + \e S.
$$
let $\mu \in (0,1)$ be a parameter to be chosen later, and which satisfies
\begin{equation}							\label{eq: mu real}
\mu \ge 2 \e K_0^2 n.
\end{equation}
By the above, our goal is to estimate
\begin{align}				\label{eq: local global}
\Pr{\inf_{x \in S_1} \|(D+U)x\|_2 \le \mu \e}
&\le \Pr{\inf_{x \in S_1} \|Ax\|_2 \le \mu \e + 2 \e K_0^2 n \amp \EE_S} + \P(\EE_S^c) \nonumber\\
&\le \Pr{\inf_{x \in S_1} \|Ax\|_2 \le 2 \mu \e \amp \EE_S} + 2 \exp(-c K_0^2 n).
\end{align}
Summarizing, we now have a control of the first coordinate of $x$, we have introduced the
global perturbation $R$ and the local perturbation $S$, and we replaced
the random matrix $D+U$ by $A = RVD + I + \e S$.

\subsubsection{Decomposition into $1 + (n-1)$ dimensions}
%~~~~~~~~~~

Next, we would like to expose the first row and first column of the matrix $A = RVD + I + \e S$.
We do so first for the matrix
$$
VD =
\begin{bmatrix}
(VD)_{11} & v^\tran \\
u & (VD)_{(1,1)}
\end{bmatrix}
\quad \text{where } u, v \in \C^{n-1}.
$$
Recalling the definition \eqref{eq: S} of $S$, we can express
\begin{equation}				\label{eq: A}
A = RVD + I + \e S =
\begin{bmatrix}
r(VD)_{11} + 1 + \sqrt{-1}\, \e s & (r v - \e Z)^\tran \\
u + \e Z & (I + VD)_{(1,1)}
\end{bmatrix}
=:
\begin{bmatrix}
A_{11} & Y^\tran \\
X & B^\tran
\end{bmatrix}.
\end{equation}
We condition on an arbitrary realization of the random matrix $V$. This fixes
the number $(VD)_{11}$, the vectors $u, v$ and the matrix $B^\tran$ involved in \eqref{eq: A}.
All randomness thus remains in the independent random variables $r$ (which is chosen uniformly in $\T$),
$s \sim N_\R(0,1)$ and the independent random vector $Z \sim N_\R(0, I_{n-1})$.
We regard the random variable $r$ as a global perturbation, and $s, Z$ as local perturbations.

\subsection{Invertibility via quadratic forms}			\label{s:quadratic forms}
%......................

Recall from \eqref{eq: local global} that our goal is to bound below the quantity
$$
\inf_{x \in S_1} \|Ax\|_2.
$$
Let $A_1,\ldots,A_n$ denote the columns of $A$. Let $h \in \C^n$ be such that
$$
\|h\|_2 = 1, \quad h^\tran A_i = 0, \quad i=2,\ldots,n.
$$
For every $x \in \C^n$ we have
$$
\|Ax\|_2 = \Big\| \sum_{i=1}^n x_i A_i \Big\|_2
\ge \Big| h^\tran \sum_{i=1}^n x_i A_i \Big|
= |x_1| \cdot |h^\tran A_1|.
$$
Since $|x_1| \ge 1/\sqrt{n}$ for all vectors $x \in S_1$, this yields
\begin{equation}				\label{eq: inf via h}
\inf_{x \in S_1} \|Ax\|_2 \ge \frac{1}{\sqrt{n}}\, |h^\tran A_1|.
\end{equation}

We thus reduced the problem to finding a lower bound on $|h^\tran A_1|$.
Let us express this quantity as a function of $X$ and $Y$ in the decomposition
as in \eqref{eq: A}. The following lemma shows that $|h^\tran A_1|$ is
essentially a quadratic form in $X, Y$, which is ultimately a quadratic form in $Z$.

\begin{lemma}[Quadratic form]					\label{lem: quadratic form}
  Consider an arbitrary square matrix
  $$
  A =
  \begin{bmatrix}
  A_{11} & Y^\tran \\
  X & B^\tran
  \end{bmatrix},
  \quad A_{11} \in \C, \quad X,Y \in \C^{n-1}, \quad B \in \C^{(n-1) \times (n-1)}.
  $$
  Assume that $B$ is invertible. Let $A_1,\ldots,A_n$ denote the columns of $A$. Let $h \in \C^n$ be such that
  $$
  \|h\|_2 = 1, \quad h^\tran A_i = 0, \quad i=2,\ldots,n.
  $$
  Then
  $$
  |h^\tran A_1| = \frac{|A_{11} - X^\tran B^{-1} Y|}{\sqrt{1 + \|B^{-1} Y\|_2^2}}.
  $$
\end{lemma}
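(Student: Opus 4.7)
The plan is to find the vector $h$ explicitly, then substitute it into $h^\tran A_1$. Since $h$ is (up to a unimodular scalar) the unique unit vector orthogonal to the last $n-1$ columns of $A$, writing down those orthogonality conditions will determine $h$ completely in terms of $B^{-1} Y$, after which the computation of $h^\tran A_1$ is a direct calculation.

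I would split $h$ according to the block structure of $A$, writing $h = (h_1, g)$ with $h_1 \in \C$ and $g \in \C^{n-1}$. The last $n-1$ columns of $A$ form the matrix $\begin{bmatrix} Y^\tran \\ B^\tran \end{bmatrix}$, so the condition $h^\tran A_i = 0$ for $i = 2, \ldots, n$ is equivalent to the single vector equation
\[
  h_1 Y^\tran + g^\tran B^\tran = 0.
\]
Transposing and using invertibility of $B$, this forces $g = -h_1 B^{-1} Y$. The normalization $\|h\|_2 = 1$ then gives
\[
  1 = |h_1|^2 + \|g\|_2^2 = |h_1|^2 \bigl( 1 + \|B^{-1} Y\|_2^2 \bigr),
\]
so $|h_1| = (1+\|B^{-1}Y\|_2^2)^{-1/2}$.

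With $h$ in hand, I would compute
\[
  h^\tran A_1 = h_1 A_{11} + g^\tran X = h_1 A_{11} - h_1 (B^{-1} Y)^\tran X = h_1 \bigl( A_{11} - Y^\tran (B^{-1})^\tran X \bigr).
\]
Since $X^\tran B^{-1} Y$ is a scalar, it equals its own transpose $Y^\tran (B^{-1})^\tran X$, so $h^\tran A_1 = h_1 ( A_{11} - X^\tran B^{-1} Y )$. Taking absolute values and inserting the value of $|h_1|$ yields the claimed identity.

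There is no real obstacle; the only thing to watch is that we are working with the transpose $\tran$ rather than the conjugate transpose, so one must carefully track that the orthogonality condition $h^\tran A_i = 0$ gives a linear equation involving $B^\tran$ and $Y^\tran$ (not their conjugate transposes), and that a scalar equals its transpose. After these bookkeeping points, the argument is two lines.
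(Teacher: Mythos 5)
Your proof is correct and follows essentially the same route as the paper: split $h$ into its first coordinate and the remaining block, solve the orthogonality condition using invertibility of $B$ to get $g=-h_1 B^{-1}Y$, fix $|h_1|$ by the normalization, and substitute into $h^\tran A_1$. The only cosmetic difference is that you make the transpose-versus-scalar step ($X^\tran B^{-1}Y = Y^\tran (B^{-1})^\tran X$) explicit, which the paper leaves implicit.
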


\begin{proof}
The argument is from  \cite[Proposition 5.1]{V sym}.
We express $h$ by exposing its first coordinate as
$$
h =
\begin{bmatrix}
h_1 \\ \bar{h}
\end{bmatrix}.
$$
Then
\begin{equation}				\label{eq: h1 hbar}
h^\tran A_1 = (h_1 \; \bar{h}^\tran)
\begin{bmatrix}
A_{11} \\ X
\end{bmatrix}
= A_{11} h_1 + \bar{h}^\tran X
= A_{11} h_1 + X^\tran \bar{h}.
\end{equation}
The assumption that $h^\tran A_i = 0$ for $i \ge 2$ can be stated as
$$
0 = (h_1 \; \bar{h}^\tran)
\begin{bmatrix}
Y^\tran \\ B^\tran
\end{bmatrix}
= h_1 Y^\tran + \bar{h}^\tran B^\tran.
$$
Equivalently,
$h_1 Y + B \bar{h} = 0$.
Hence
\begin{equation}				\label{eq: hbar}
\bar{h} = - h_1 \cdot B^{-1} Y.
\end{equation}
To determine $h_1$, we use the assumption $\|h\|_2 = 1$ which implies
$$
1 = |h_1|^2 + \|\bar{h}\|_2^2 = |h_1|^2 + |h_1|^2 \cdot \|B^{-1} Y\|_2^2.
$$
So
\begin{equation}				\label{eq: h1}
|h_1| = \frac{1}{\sqrt{1 + \|B^{-1} Y\|_2^2}}.
\end{equation}
Combining \eqref{eq: h1 hbar} and \eqref{eq: hbar}, we obtain
$$
|h^\tran A_1| = A_{11} h_1 - h_1 \cdot X^\tran B^{-1} Y.
$$
This and \eqref{eq: h1} complete the proof.
\end{proof}

Let us use this lemma for our random matrix $A$ in \eqref{eq: A}.
One can check that the minor $B$ is invertible almost surely.
To facilitate the notation, denote
$$
M := B^{-1}.
$$
Recall that $M$ is a fixed matrix.
Then
$$
|h^\tran A_1| = \frac{|A_{11} - X^\tran M Y|}{\sqrt{1 + \|M Y\|_2^2}}.
$$

Since as we know from \eqref{eq: A},
$$
A_{11} = r(VD)_{11} + 1 + \sqrt{-1}\, \e s, \quad
X = u + \e Z, \quad
Y = rv - \e Z,
$$
we can expand
\begin{equation}				\label{eq: ratio}
|h^\tran A_1| =
\frac{|r (VD)_{11} + 1 + \sqrt{-1}\, \e s - r u^\tran M v - \e r (Mv)^\tran Z + \e u^\tran M Z + \e^2 Z^\tran M Z|}
  {\sqrt{1 + \|r Mv - \e MZ\|_2^2}}.
\end{equation}
Recall that $(VD)_{11}$, $u$, $v$, $M$ are fixed, while $r$, $s$, $Z$ are random.

Our difficulty in controlling this ratio is that the typical magnitudes of
$\|M\|$ and of $\|Mv\|_2$ are unknown to us. So we shall consider all possible cases depending
on these magnitudes.

\subsection{When the denominator is small}
%......................

We start with the case where the denominator in \eqref{eq: ratio} is $O(1)$.
The argument in this case will rely on the local perturbation given by $s$.

Let $K \ge 1$ be a parameter to be chosen later, and let us consider the event
$$
\EE_{\text{denom}} = \left\{ \|r Mv - \e M Z\|_2 \le K \right\}.
$$
This event depends on random variables $r$ and $Z$ and is independent of $s$.
Let us condition on realizations of $r$ and $Z$ which satisfy $\EE_{\text{denom}}$.
We can rewrite \eqref{eq: ratio} as
$$
|h^\tran A_1| \ge
\frac{|r a + \sqrt{-1}\, \e s|}{\sqrt{1 + K^2}}
\ge \frac{|r a + \sqrt{-1}\, \e s|}{2K}
$$
where $a \in \C$ and $r \in \T$ (and of course $K$) are fixed numbers and $s \sim N_\R(0,1)$.
Since the density of $s$ is bounded by $1/\sqrt{2\pi}$, it follows that
$$
\P_s \left\{ |h^\tran A_1| \le \frac{\l \e}{K} \right\} \le C \l, \quad \l \ge 0.
$$
Therefore, a similar bound holds for the unconditional probability:
$$
\Pr{ |h^\tran A_1| \le \frac{\l \e}{K} \text{ and } \EE_{\text{denom}} } \le C \l, \quad \l \ge 0.
$$
Finally, using \eqref{eq: inf via h} this yields
\begin{equation}				\label{eq: denom small}
\Pr{ \inf_{x \in S_1} \|Ax\|_2 \le \frac{\l \e}{K \sqrt{n}} \text{ and } \EE_{\text{denom}} } \le C \l, \quad \l \ge 0.
\end{equation}
This is a desired form of the invertibility estimate, which is useful when the event $\EE_{\text{denom}}$
holds. Next we will analyze the case where it does not.

\subsection{When the denominator is large and $\|M\|$ is small}
%......................

If $\EE_{\text{denom}}$ does not occur, either $\|Mv\|_2$ or $\|MZ\|_2$ must be large.
Furthermore, since $M$ is fixed and $Z \sim N_\R(0, I_{n-1})$, we have
$\|MZ\|_2 \sim \|M\|_\HS$ with high probability. We shall consider the cases where
$\|M\|_\HS$ is small and large separately.
In this section we analyze the case where $\|M\|_\HS$ is small.
The argument will rely on a the global perturbation $r$ and the local perturbation $Z$.

To formalize this, assume that $\EE_{\text{denom}}$ does not occur. Then we can estimate
the denominator in \eqref{eq: ratio} as
$$
\sqrt{1 + \|rMv - \e MZ\|_2^2}
\le 2 \|rMv - \e MZ\|_2
\le 2 \|Mv\|_2 + \e \|MZ\|_2.
$$
Note that $\E \|MZ\|_2^2 = \|M\|_\HS^2$. This prompts us to consider the event
$$
\EE_{MZ} := \left\{ \|MZ\|_2 \le K_1 \|M\|_\HS \right\},
$$
where $K_1 \ge 1$ is a parameter to be chosen later. This event is likely. Indeed,
the map $f(Z)=\|MZ\|_2$ defined on $\R^{n-1}$ has Lipschitz norm bounded by $\|M\|$,
so a concentration inequality in the Gauss space (see e.g. \cite[(1.5)]{LT}) implies that
$$
\P(\EE_{MZ}) \ge 1 - \exp \Big( -\frac{c K _1^2 \|M\|_\HS^2}{\|M\|^2} \Big)
\ge 1 - \exp(-c K_1^2).
$$
On the event $\EE_{MZ} \cap \EE_{\text{denom}}^c$ one has
\begin{equation}				\label{eq: K}
\sqrt{1 + \|rMv - \e MZ\|_2^2}  \le 2 \|Mv\|_2 + \e K_1 \|M\|_\HS.
\end{equation}

\bigskip

Now we consider the case where $\|M\|_\HS$ is small. This can be formalized by the event
\begin{equation}							\label{eq: EEM}
\EE_M := \left\{ \|M\|_\HS \le \frac{K}{2 \e K_1} \right\}.
\end{equation}
On the event $\EE_{MZ} \cap \EE_{\text{denom}}^c \cap \EE_M$, the inequality \eqref{eq: K}
yields the following bound on the denominator in \eqref{eq: ratio}:
$$
\sqrt{1 + \|rMv - \e MZ\|_2^2}  \le 2 \|Mv\|_2 + \frac{K}{2}.
$$
On the other hand, the left side of this inequality is at least $K$ by $\EE_{\text{denom}}^c$. Therefore
\begin{equation}				\label{eq: denom dominated}
\sqrt{1 + \|rMv - \e MZ\|_2^2}  \le 4 \|Mv\|_2.
\end{equation}

To estimate the numerator in \eqref{eq: ratio}, let us condition for a moment on all random variables
but $r$. The numerator then takes the form $|ar+b|$ where $a = (VD)_{11} - u^\tran M v - \e (Mv)^\tran Z$
and $b = 1 + \sqrt{-1}\, \e s + \e u^\tran M Z + \e^2 Z^\tran M Z$ are fixed numbers and
$r$ is uniformly distributed in $\T$. A quick calculation yields a general bound on the conditional probability:
$$
\P_r \left\{ |ar+b| \ge \l_1 |a| \right\} \ge 1 - C\l_1, \quad \l_1 \ge 0.
$$
Therefore a similar bound holds unconditionally.
Let $\l_1 \in (0,1)$ be a parameter to be chosen later. We showed that the event
$$
\EE_{\text{num}} :=
  \left\{ \text{numerator in \eqref{eq: ratio}} \ge \l_1 |(VD)_{11} - u^\tran M v - \e (Mv)^\tran Z| \right\}
$$
is likely:
$$
\P(\EE_{\text{num}}) \ge 1 - C\l_1.
$$

Assume that the event $\EE_{\text{num}} \cap \EE_{MZ} \cap \EE_{\text{denom}}^c \cap \EE_M$ occurs.
(Here the first two events are likely, while the other two specify the case being considered
in this section.) We substitute the bounds on the denominator \eqref{eq: denom dominated}
and the numerator (given by the definition of $\EE_{\text{num}}$) into \eqref{eq: ratio} to obtain
$$
|h^\tran A_1|
\ge \frac{\l_1 |(VD)_{11} - u^\tran M v - \e (Mv)^\tran Z|}{4\|Mv\|_2}.
$$
We can rewrite this inequality as
$$
|h^\tran A_1|
\ge \big| d + \frac{\l_1 \e}{4} \cdot w^\tran Z \big|,
\quad \text{where} \quad d = \l_1 \cdot \frac{(VD)_{11} - u^\tran M v}{4\|Mv\|_2},
\quad w :=- \frac{Mv}{\|Mv\|_2}.
$$
Here $d$ is a fixed number and $w$ is a fixed unit vector,
while $Z \sim N_\R(0,I_{n-1})$.
Therefore $w^\tran Z= \theta \gamma$, where $\gamma  \sim N_\R(0, 1)$, and $\theta \in \C, \  |\theta|=1$.
A quick density calculation yields the following bound on the conditional probability
$$
\P_Z \left\{ \big| d + \frac{\l_1 \e}{4} \cdot w^\tran Z \big| \le \l \l_1 \e \right\} \le C\l, \quad \l >0.
$$
Hence a similar bound holds unconditionally:
$$
\Pr{ |h^\tran A_1| \le \l \l_1 \e
  \text{ and } \EE_{\text{num}} \cap \EE_{MZ} \cap \EE_{\text{denom}}^c \cap \EE_M }
\le C\l, \quad \l >0.
$$
Therefore,
\begin{align*}
\Pr{ |h^\tran A_1| \le \l \l_1 \e \text{ and } \EE_{\text{denom}}^c \cap \EE_M }
&\le \P(\EE_{\text{num}}^c) + P(\EE_{MZ}^c) + C\l \\
&\le C\l_1 + \exp(-c K_1^2) + C\l, \quad \l > 0.
\end{align*}
Using \eqref{eq: inf via h}, we conclude that
\begin{align}				\label{eq: denom large M small}
\Pr{ \inf_{x \in S_1} \|Ax\|_2 \le \frac{\l \l_1 \e}{\sqrt{n}} \text{ and } \EE_{\text{denom}}^c \cap \EE_M }
\le C\l_1 + \exp(-c K_1^2) + C\l, \quad \l > 0.
\end{align}

\subsection{When $\|M\|$ is large}					\label{s:M large}
%......................

The remaining case to analyze is where $\|M\|$ is large, i.e. where $\EE_M$ does not occur.
Here shall estimate the desired quantity $\inf_{x \in S_1} \|Ax\|_2$ directly, without
using Lemma~\ref{lem: quadratic form}. The local perturbation $Z$ will do the job.

Indeed, on $\EE_M^c$ we have
$$
\|B^{-1}\|
\ge \frac{1}{\sqrt{n}} \|B^{-1}\|_\HS
= \frac{1}{\sqrt{n}} \|M\|_\HS
\ge \frac{K}{2 \e K_1 \sqrt{n}}.
$$
Therefore there exists a vector $\tilde{w} \in \C^{n-1}$ such that
\begin{equation}				\label{eq: wbar}
\|\tilde{w}\|_2 = 1, \quad \|B \tilde{w}\|_2 \le \frac{2 \e K_1 \sqrt{n}}{K}.
\end{equation}
Note that $\tilde{w}$ can be chosen depending only on $B$ and thus is fixed.

Let $x \in S_1$ be arbitrary; we can express it as
\begin{equation}				\label{eq: x1}
x =
\begin{bmatrix}
x_1 \\ \tilde{x}
\end{bmatrix},
\quad \text{where } |x_1| \ge \frac{1}{\sqrt{n}}.
\end{equation}
Set
$$
w =
\begin{bmatrix}
0 \\ \tilde{w}
\end{bmatrix} \in \C^n.
$$
Using the decomposition of $A$ given in \eqref{eq: A}, we obtain
\begin{align*}
\|Ax\|_2
&\ge |w^\tran A x| =
  \left| \begin{bmatrix}
  0 & \tilde{w}^\tran
  \end{bmatrix}
  \begin{bmatrix}
  A_{11} & Y^\tran \\
  X & B^\tran
  \end{bmatrix}
  \begin{bmatrix}
  x_1 \\ \tilde{x}
  \end{bmatrix} \right| \\
&= |x_1 \cdot \tilde{w}^\tran X + \tilde{w}^\tran B^\tran \tilde{x}| \\
&\ge |x_1| \cdot |\tilde{w}^\tran X| - \|B \tilde{w}\|_2 		\quad \text{(by the triangle inequality)} \\
&\ge \frac{1}{\sqrt{n}} \, |\tilde{w}^\tran X| - \frac{2 \e K_1 \sqrt{n}}{K}	
		\quad \text{(using \eqref{eq: x1} and \eqref{eq: wbar}).}
\end{align*}
Recalling from \eqref{eq: A} that $X = u + \e Z$ and taking the infimum over
$x \in S_1$, we obtain
$$
\inf_{x \in S_1} \|Ax\|_2
\ge \frac{1}{\sqrt{n}} \, |\tilde{w}^\tran u + \e \tilde{w}^\tran Z| - \frac{2 \e K_1 \sqrt{n}}{K}.
$$
Recall that $\tilde{w}$, $u$ are fixed vectors, $\|\tilde{w}\|_2 = 1$, and $Z \sim N_\R(0,I_{n-1})$.
Then $ \tilde{w}^\tran Z= \theta \gamma$, where $\gamma \sim N_\R(0,1)$, and $\theta \in \C, \ |\theta|=1$. A quick density calculation yields
the following bound on the conditional probability:
$$
\P_Z \left\{ |\tilde{w}^\tran u + \e \tilde{w}^\tran Z| \le \e \l \right\}
\le C \l, \quad \l > 0.
$$
Therefore, a similar bound holds unconditionally, after intersection with the event $\EE_M^c$.
So we conclude that
\begin{equation}				\label{eq: M large}
\Pr{ \inf_{x \in S_1} \|Ax\|_2 \le \frac{\e \l}{\sqrt{n}} - \frac{2 \e K_1 \sqrt{n}}{K}
  \text{ and } \EE_M^c }
\le C \l, \quad \l > 0.
\end{equation}

\subsection{Combining the three cases}		\label{s:combining}
%......................

We have obtained lower bounds on $\inf_{x \in S_1} \|Ax\|_2$ separately in
each possible case:
\begin{itemize}
  \item inequality \eqref{eq: denom small} in the case of small denominator (event $\EE_{\text{denom}}$);
  \item inequality \eqref{eq: denom large M small} in the case of large denominator, small $\|M\|$
  (event $\EE_{\text{denom}}^c \cap \EE_M$);
  \item inequality \eqref{eq: M large} in the case of large $\|M\|$ (event $\EE_M^c$).
\end{itemize}

To combine these three inequalities, we set
$$
\mu := \frac{1}{2}
  \min \left( \frac{\l}{K \sqrt{n}}, \; \frac{\l \l_1}{\sqrt{n}}, \; \frac{\l}{\sqrt{n}} - \frac{2 K_1 \sqrt{n}}{K} \right).
$$
We conclude that if the condition \eqref{eq: K0 real} on $K_0$
and the condition \eqref{eq: mu real} on $\mu$ are satisfied, then
\begin{align*}
\Pr{ \inf_{x \in S_1} \|Ax\|_2 \le 2 \mu \e }
  &\le C\l + (C \l_1 + \exp(-cK_1^2) + C\l) + C\l \\
  &= 3 C \l + C \l_1 + \exp(-cK_1^2).
\end{align*}
Substituting into \eqref{eq: local global}, we obtain
$$
\Pr{\inf_{x \in S_1} \|(D+U)x\|_2 \le \mu \e}
\le 3 C \l + C \l_1 + \exp(-cK_1^2) + 2 \exp(-c K_0^2 n).
$$
The same holds for each $S_i$, $i \in [n]$.
Substituting into \eqref{eq: smin via Si}, we get
$$
\Pr{\smin(D+U) \le \mu \e}
\le 3 C \l n + C \l_1 n + \exp(-cK_1^2) n + 2 \exp(-c K_0^2 n) n.
$$
This estimate holds for all $\l, \l_1, \e \in (0,1)$ and all $K, K_0, K_1 \ge 1$
provided that the conditions \eqref{eq: K0 real} on $K_0$
and \eqref{eq: mu real} on $\mu$ are satisfied.
So for a given $\e \in (0,1)$, let us choose
$$
\l = \l_1 = \e^{0.1}, \quad
K_0 = K_1 = \log(1/\e), \quad
K = \frac{4 K_1 n}{\l} = 4 \log(1/\e) n \e^{-0.1}.
$$
Then
$$
\mu \gtrsim \frac{\l \l_1}{K \sqrt{n}} = \frac{\e^{0.3}}{4 \log(1/\e) n^{3/2}}.
$$
Assume that $\e \le c' n^{-4}$ for a sufficiently small absolute constant $c'>0$; then
one quickly checks that the conditions \eqref{eq: K0 real} and \eqref{eq: mu real} are satisfied.
For any such $\e$ and for the choice of parameters made above, our conclusion becomes
$$
\Pr{ \smin(D+U) \le \frac{\e^{0.3}}{4 \log(1/\e) n^{3/2}} }
\lesssim \e^{0.1} n + \exp(-c \log^2(1/\e)) + \exp(-c \log^2(1/\e) n) n.
$$
Since this estimate is valid for all $\e \le c' n^{-4}$,
this quickly leads to the conclusion of Theorem~\ref{thm: main unit}. \qed

\section{Orthogonal perturbations: proof of Theorem~\ref{thm: main orth full}}     		 \label{s: proof orth}
%---------------------

In this section we give a formal proof of Theorem~\ref{thm: main orth full}.

\subsection{Initial reductions of the problem}
%................

\subsubsection{Eliminating dimensions $n=2,3$}
%~~~~~~~~~~~~

Since our argument will make use of $(n-3) \times (n-3)$ minors, we would like
to assume that $n > 3$ from now on. This calls for a separate argument in dimensions $n=2,3$.
The following is a somewhat stronger version Theorem~\ref{thm: main orth full} in these dimensions.

\begin{theorem}[Orthogonal perturbations in low dimensions]		\label{thm: main low dim}	
  Let $B$ be a fixed $n \times n$ complex matrix, where $n \in \{2,3\}$.
  Assume that
  \begin{equation}				\label{eq: no complex orthogonality}
  \|B B^\tran - I\| \ge \d \|B\|^2
  \end{equation}
  for some $\d \in (0,1)$.
  Let $U$ be a random matrix uniformly distributed in $O(n)$.
  Then
  $$
  \Pr{\smin(B+U) \le t} \le C (t/\d)^c, \quad t>0.
  $$
\end{theorem}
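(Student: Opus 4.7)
The plan is to combine the deterministic bound $\smin(A) \ge |\det A|/\|A\|^{n-1}$ with a Remez-type anti-concentration estimate for $\det(B+U)$, viewed as a low-degree trigonometric polynomial in the angle parameters of $U \in O(n)$. Since $\|B+U\| \le \|B\|+1$ and the case $\|B\| \le 1/4$ is trivial (then $\smin(B+U) \ge 3/4$ deterministically), we may assume $\|B\| \ge 1/4$, so that $\|B\|+1 \asymp \|B\|$. As $O(n)$ has two connected components for $n \in \{2,3\}$, it suffices to bound the probability conditionally on $U$ lying in each.

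Fix one component and parameterize it smoothly by $d = n(n-1)/2 \in \{1,3\}$ angles $\theta = (\theta_1,\ldots,\theta_d)$ so that the entries of $U(\theta)$ are trigonometric polynomials of degree one in each $\theta_i$; then
\[
  p(\theta) := \det\bigl(B + U(\theta)\bigr)
\]
is a trigonometric polynomial of total degree at most $n \le 3$ with complex coefficients. A Remez / Brudnyi--Ganzburg / Nazarov-type anti-concentration inequality for such polynomials yields absolute constants $C, c>0$ (depending only on $n \le 3$) with
\[
  \mu\bigl\{\theta : |p(\theta)| \le \lambda\,\|p\|_\infty\bigr\} \le C\lambda^c, \qquad \lambda > 0,
\]
where $\mu$ is the normalized Haar measure on the component. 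Combined with $\smin(B+U) \ge |p(\theta)|/(\|B\|+1)^{n-1}$, this gives
\[
  \Pr{\smin(B+U) \le t} \le C \Bigl(\frac{t\,(\|B\|+1)^{n-1}}{\|p\|_\infty}\Bigr)^{c}.
\]

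The key step is to show that, on each component,
\[
  \|p\|_\infty \;\gtrsim\; \delta\,(\|B\|+1)^{n-1}.
\]
Expand $\det(B+U) = \sum_{k=0}^n e_k(B,U)$ by multilinearity, where $e_k$ is the mixed discriminant of degree $k$ in $B$ and $n-k$ in $U$. The Fourier coefficients of $p(\theta)$ as a trigonometric polynomial in $\theta$ are then explicit polynomial expressions in the entries of $B$ (together with the component-determined sign $\det U = \pm 1$). Since the sup of a trigonometric polynomial dominates each of its Fourier coefficients up to an absolute factor, $\|p\|_\infty \le \eps$ forces a list of such expressions to be $O(\eps)$. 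For $n = 2$ these expressions are $\det B \pm 1$, $\tr B$, and $b_{21}-b_{12}$ on the rotation component (and $b_{22}-b_{11}$, $b_{12}+b_{21}$ on the reflection component); a direct computation then translates their simultaneous smallness into $\|BB^\tran - I\| \lesssim \eps\,(\|B\|+1)$, which combined with the hypothesis $\|BB^\tran - I\| \ge \delta\|B\|^2$ gives the desired $\eps \gtrsim \delta(\|B\|+1)$. For $n = 3$ the same scheme applies, but with additional Fourier coefficients corresponding to entries of the cofactor matrix of $B$ and to quadratic invariants of $B$; the same elementary bookkeeping yields $\|BB^\tran - I\| \lesssim \eps\,(\|B\|+1)^{2}$ and hence the desired lower bound.

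Plugging this lower bound on $\|p\|_\infty$ into the previous display produces $\Pr{\smin(B+U) \le t} \le C\,(t/\delta)^c$, as required. The main obstacle is precisely the key step: converting simultaneous smallness of the Fourier coefficients of $p(\theta)$ into quantitative proximity of $B$ to the complex-orthogonal locus $\{B : BB^\tran = I\}$ with the correct power of $\|B\|$. This is essentially immediate for $n=2$; for $n = 3$ it is a more elaborate but still elementary computation that must be carried out separately on each connected component of $O(3)$.
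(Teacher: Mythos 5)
Your proposal works for $n=2$ and is essentially the paper's argument there, but the $n=3$ case has a genuine gap that you have not noticed.

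The trouble is precisely the factor $\|B+U\|^{n-1}$ in the deterministic bound $\smin(B+U)\ge|\det(B+U)|/\|B+U\|^{n-1}$. For $n=3$ this costs $(\|B\|+1)^{2}$, while the information you can extract from $\sup_U|\det(B+U)|\le\e$ is only $\|BB^\tran-I\|\lesssim\e\|B\|$ (not $\e(\|B\|+1)^{2}$ as you assert: the determinant is a degree-$3$ polynomial in the entries of $B$, and the bookkeeping in the paper's Lemma A.4 shows the error enters linearly in $\|B\|$, irrespective of $n\in\{2,3\}$). Running your chain of implications for $n=3$: $\smin(B+U)\le t$ with significant probability forces, via Remez, $\sup_U|\det(B+U)|\lesssim t(\|B\|+1)^2$, then $\|BB^\tran-I\|\lesssim t(\|B\|+1)^2\|B\|$, and the hypothesis $\|BB^\tran-I\|\ge\d\|B\|^2$ yields only $\d\lesssim t\|B\|$. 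That is weaker than the target $\d\lesssim t$ by an unbounded factor when $\|B\|\to\infty$, and no amount of "more elaborate bookkeeping" with additional Fourier coefficients of $\det(B+U(\theta))$ can recover it, because the determinant simply does not see the middle singular value. Concretely, if $s_1(B)\approx s_2(B)\approx M$ and $s_3(B)=O(1)$, then $|\det(B+U)|$ can be as large as $\asymp M^2$ even when $\smin(B+U)\asymp 1$, so the determinant-to-$\smin$ conversion necessarily loses a factor $M$ that your argument does not absorb.

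The paper avoids this by a two-case split on the middle singular value $s_2(B)$. When $s_2(B)\le K$, Weyl's inequality gives the sharper bound $|\det(B+U)|\le 3t(s_2+1)\|B\|\lesssim tK\|B\|$, which only loses a factor $K$ and closes the argument (at the cost of the probability bound degrading to $C(tK/\d)^c$). When $s_2(B)\ge K$, the determinant approach is abandoned entirely: one shows that $s_3(B)\le 2$, reduces $\smin(B+U)\le t$ to a one-dimensional linear constraint $|u^\tran(B+U)v|\le t+9/K$ with $u,v$ fixed unit vectors built from the singular vectors of $B$, applies Remez to that quadratic expression, and then shows by testing various rotations $U$ that the resulting system of constraints on $u,v$ is inconsistent. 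Choosing $K\asymp(\d/t)^{1/2}$ balances the two cases. Without this dichotomy and the one-dimensional reduction, your approach does not prove the theorem for $n=3$. You should also note, as the paper does, that the example $B=M\bigl[\begin{smallmatrix}1&i\\ i&-1\end{smallmatrix}\bigr]$ shows the factor $\|B\|^2$ in the hypothesis \eqref{eq: no complex orthogonality} cannot be dropped, which is why the loss of a factor $\|B\|$ in your $n=3$ chain is fatal rather than merely cosmetic.
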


We defer the proof of Theorem~\ref{thm: main low dim} to Appendix~\ref{a: main low dim}.

\medskip

Theorem~\ref{thm: main low dim} readily implies Theorem~\ref{thm: main orth full} in dimensions $n=2, 3$.
Indeed, the assumptions in \eqref{eq: di} yield
$$
\|D D^\tran - I\|
= \max_i |d_i^2 - 1|
\ge \frac{1}{2} \max_{i, j} |d_i^2 - d_j^2|
\ge \frac{\d}{2}
\ge \frac{\d}{2K^2} \, \|D\|^2.
$$
Therefore we can apply Theorem~\ref{thm: main low dim}
with $\d/2K^2$ instead of $\d$, and obtain
$$
\Pr{\smin(D+U) \le t} \le C (2K^2 t / \d)^c, \quad t>0.
$$
Thus we conclude a slightly stronger version of Theorem~\ref{thm: main orth full}
in dimensions $n=2, 3$.

\begin{remark}[Complex orthogonality]
  The factor $\|B\|^2$ can not be removed from the assumption \eqref{eq: no complex orthogonality}.
  Indeed, it can happen that $\|B B^\tran - I\| = 1$ while $B+U$ is arbitrarily poorly invertible.
  Such an example is given by the matrix
  $B = M \cdot
    \left[\begin{smallmatrix}
      1 & i \\
      i & -1
     \end{smallmatrix}\right]$
  where $M \to \infty$.
  Then $\det(B+U) = 1$ for all $U \in O(2)$; $\|B+U\| \sim M$, thus
  $\smin(B+U) \lesssim 1/M \to 0$.
  On the other hand, $B B^\tran = 0$.

  This example shows that, surprisingly, staying away from the set of complex orthogonal matrices
  at (any) {\em constant} distance may not guarantee good invertibility of $B+U$.
  It is worthwhile to note that this difficulty does not arise for real matrices $B$.
  For such matrices one can show that factor $\|B\|^2$ can be removed from \eqref{eq: no complex orthogonality}.
\end{remark}

\begin{remark}
Theorem~\ref{thm: main low dim} will be used not only to eliminate the low dimensions $n=2$ and $n=3$
in the beginning of the argument. We will use it one more time in the heart of the proof,
in Subsection~\ref{sub: in dimension 3}
where the problem in higher dimensions $n$ will get reduced to invertibility of certain matrices in 
dimensions $n=2,3$.
\end{remark}

\subsection{Local perturbations and decomposition of the problem}
%..................

We can represent $U$ in Theorem~\ref{thm: main orth full} as
$U = V^{-1} W$ where $V,W \in O(n)$ are random independent matrices,
$V$ is uniformly distributed in $O(n)$ while $W$ may have arbitrary distribution.
We are going to define $W$ as a small random perturbation of identity.

\subsubsection{The local perturbation $S$}				\label{s: local perturbation}
%~~~~~~~~~~~~~

Let $S$ be an independent random Gaussian skew-symmetric matrix;
thus the above-diagonal entries of $S$ are i.i.d. $N_\R(0,1)$ random variables and $S^\tran = -S$.
By Lemma~\ref{lem: identity}, $W_0 = I + \e S$ is approximately orthogonal up to error $O(\e^2)$.
Although this lemma was stated over the complex numbers
it is evident from the proof that the same result holds over the reals as well
(skew-Hermitian is replaced by skew-symmetric, and $U(n)$ by $O(n)$).

More formally, fix an arbitrary number $\e \in (0,1)$.
Applying the real analog of Lemma~\ref{lem: identity}
for the random matrix $W_0 = I + \e S$, we obtain a random matrix $W \in O(n)$
that satisfies
\begin{align*}
\smin(D+U)
  &= \smin(D + V^{-1} W) = \smin(V D + W) \\
  &\ge \smin(V D + W_0) - \|W - W_0\| \\
  &\ge \smin(V D + W_0) - 2 \e^2 \|S^2\|
    \quad \text{whenever} \quad \e^2 \|S^2\| \le 1/4.
\end{align*}

Further, $\|S\| = O(\sqrt{n})$ with high probability.
Indeed, let $K_0 > 1$ be a parameter to be chosen later, which satisfies
\begin{equation}				\label{eq: K0}
\e^2 K_0^2 n \le 1/4.
\end{equation}
Consider the event
\begin{equation}				\label{eq: S norm}
\EE_S := \bigl\{ \|S\| \le K_0 \sqrt{n} \bigr\};
\quad \text{then }
\P(\EE_S^c) \le 2 \exp(-c K_0^2 n)
\end{equation}
 provided that
\begin{equation}				\label{eq: C0}
K_0 >C_0
\end{equation}
 for an appropriately large constant $C_0$. 
Indeed, by rotation invariance $S$ has the same distribution as $(\hat{S} - \hat{S}^\tran)/\sqrt{2}$
where $\hat{S}$ is the matrix with all independent $N_\R(0,1)$ entries.
But for the matrix $\hat{S}$, a version of \eqref{eq: S norm} is 
a standard result on random matrices with iid entries, see \cite[Theorem~5.39]{V tutorial}.
Thus by triangle inequality, \eqref{eq: S norm} holds also for $S$.

On $\EE_S$, one has $\e^2 \|S^2\| \le 1/4$ due to \eqref{eq: K0}, and thus
$$
\smin(D+U) \ge \smin(V D + W_0) - 2 \e^2 K_0^2 n.
$$

Next, let $\mu \in (0,1)$ be a parameter to be chosen later, and which satisfies
\begin{equation}   \label{eq: mu is big}
\mu \ge 2 \e K_0^2 n.
\end{equation}
Our ultimate goal will be to estimate
$$
p := \Pr{\smin(D+U) \le \mu \e}.
$$
By the above, we have
\begin{align}					\label{eq: p}
p
  &\le \Pr{\smin(V D + W_0) \le \mu \e + 2 \e^2 K_0^2 n \amp \EE_S} + \P(\EE_S^c) \nonumber \\
  &\le \Pr{\smin(V D + W_0) \le 2 \mu \e \amp \EE_S} + \P(\EE_S^c) \nonumber\\
  &\le \Pr{\smin(A) \le 2 \mu \e \amp \EE_S} + 2 \exp(-c K_0^2 n),
\end{align}
where
$$
A := V D + I + \e S.
$$
Summarizing, we have introduced a local perturbation $S$,
which we can assume to be well bounded due to $\EE_S$.
Moreover, $S$ is independent from $V$, which is uniformly distributed in $O(n)$.

\subsubsection{Decomposition of the problem}
%~~~~~~~~~~~~~~~~~

We are trying to bound below
$$
\smin(A) = \inf_{x \in S^{n-1}} \|Ax\|_2.
$$
Our immediate task is to reduce the set of vectors $x$ in the infimum to those
with considerable energy in the first two coordinates, $|x_1|^2 + |x_2|^2 \ge 1/n$.
This this allow us to introduce global perturbations $R$ and $Q$, which
will be rotations of the first few (two or three) coordinates.

To this end, note that for every $x \in S^{n-1}$ there exists a coordinate $i \in [n]$
such that $|x_i| \ge n^{-1/2}$. Therefore
\begin{equation}				\label{eq: sphere decomposition}
S^{n-1} = \bigcup_{i \in [n]} S_i
\quad \text{where } S_i := \bigl\{ x \in S^{n-1} :\; |x_i| \ge n^{-1/2} \bigr\}.
\end{equation}
More generally, given a subset of indices $J \in [n]$, we shall work with the set of vectors
with considerable energy on $J$:
$$
S_{J} := \biggl\{ x \in S^{n-1} :\; \sum_{j \in J} x_j^2 \ge 1/n \biggr\}.
$$
Note that the sets $S_J$ increase by inclusion:
$$
J_1 \subseteq J_2 \quad \text{implies} \quad S_{J_1} \subseteq S_{J_2}.
$$
To simplify the notation, we shall write $S_{1,2,3}$ instead of $S_{\{1,2,3\}}$, etc.

Using \eqref{eq: sphere decomposition}, we decompose the event we are trying to estimate as follows:
$$
\bigl\{ \smin(A) \le 2 \mu \e \bigr\}
= \bigcup_{i \in [n]} \biggl\{ \inf_{x \in S_i} \|Ax\|_2 \le 2 \mu \e \biggr\}.
$$

Next, for every $i \in [n]$,
\[
 \max_{j \in [n]} |d_i^2-d_j|^2 \ge \frac{1}{2} \, \max_{i, j \in [n]} |d_i^2-d_j|^2,
\]
so the second assumption in \eqref{eq: D diag} implies
that  there exists $j=j(i) \in [n]$, $j \ne i$, such that
$|d_i^2 - d_{j(i)}^2| \ge \d$.

Since $S_i \subseteq S_{i,j(i)}$, we obtain from the above and \eqref{eq: p} that
\begin{align}					\label{eq: sum pi}
p
  &\le \sum_{i=1}^n \Pr{ \inf_{x \in S_{i,j(i)}} \|Ax\|_2 \le 2 \mu \e \amp \EE_S } + 2 \exp(-c K_0^2 n) \nonumber\\
  &=: \sum_{i=1}^n p_i + 2 \exp(-c K_0^2 n).
\end{align}
We reduced the problem to estimating each term $p_i$.
This task is similar for each $i$, so without loss of generality we can focus on $i=1$.
Furthermore, without loss of generality we can assume that $j(1) = 2$. Thus our goal is to estimate
$$
p_1 = \Pr{ \inf_{x \in S_{1,2}} \|Ax\|_2 \le 2 \mu \e \amp \EE_S }
$$
under the assumption that

\begin{equation}				\label{eq: di assumption}
|d_1^2 - d_2^2| \ge \d.
\end{equation}

\medskip

Finally, we further decompose the problem according to whether there exists a well invertible $(n-3) \times (n-3)$ minor
of $A_{(1,2)}$ or not. Why we need to consider these cases was explained informally in Section~\ref{s: intro poorly invertible}.

Let $K_1 \ge 1$ be a parameter to be chosen later. By a union bound, we have
\begin{align}				\label{eq: sum p1i}
p_1
  &\le \sum_{i=3}^n \Pr{ \inf_{x \in S_{1,2}} \|Ax\|_2 \le 2 \mu \e \amp \|(A_{(1,2,i)})^{-1}\| \le \frac{K_1}{\e} \amp \EE_S }
   \nonumber\\
  &\phantom{xxxx} + \Pr{ \inf_{x \in S_{1,2}} \|Ax\|_2 \le 2 \mu \e \amp \|(A_{(1,2,i)})^{-1}\| > \frac{K_1}{\e}
    \ \forall i \in [3:n] \amp \EE_S } \nonumber\\
  &=: \sum_{i=3}^n p_{1,i} + p_{1,0}.
\end{align}

\subsection{When a minor is well invertible: going $3$ dimensions up}		 \label{s: well invertible}
%..................

In this section we shall estimate the probabilities $p_{1,i}$, $i=3,\ldots,n$, in the decomposition \eqref{eq: sum p1i}.
All of them are similar, so without loss of generality we can focus on estimating $p_{1,3}$.
Since $S_{1,2} \subseteq S_{1,2,3}$, we have
\begin{equation}				\label{eq: p13}
p_{1,3} \le \Pr{ \inf_{x \in S_{1,2,3}} \|Ax\|_2 \le 2 \mu \e \amp \|(A_{(1,2,3)})^{-1}\| \le \frac{K_1}{\e} \amp \EE_S }.
\end{equation}
This is the same as the original invertibility problem, except now we have three extra pieces of information:
(a) the minor $A_{(1,2,3)}$ is well invertible;
(b) the vectors in $S_{1,2,3}$ over which we are proving invertibility have large energy in the first three coordinates;
(c) the local perturbation $S$ is well bounded.

\subsubsection{The global perturbations $Q$, $R$}
%~~~~~~~~~~~~~~~~~

The core argument in this case will rely on global perturbations (rather than the local perturbation $S$),
which we shall now introduce into the matrix $A = VD + I + \e S$.
Define
$$
Q :=
\begin{bmatrix}
Q_0 & 0 \\
0 & \ovr{I}
\end{bmatrix},
\quad
R :=
\begin{bmatrix}
R_0 & 0 \\
0 & \ovr{I}
\end{bmatrix}
$$
where $Q_0 \in SO(3)$ and $R_0 \in O(3)$ are independent uniform random matrices,
and $\ovr{I}$ denotes the identity on $\C^{[3:n]}$.

Let us condition on $Q$ and $R$ for a moment.
By the rotation invariance of the random orthogonal matrix $V$ and of the Gaussian skew-symmetric matrix $S$,
the (conditional) joint distribution of the pair $(V,S)$ is the same as that of $(Q^\tran R V Q, Q^\tran S Q)$.
Therefore, the conditional distribution of $A$ is the same as that of
$$
Q^\tran R V Q D + I + \e Q^\tran S Q
= Q^\tran (R V Q D Q^\tran + I + \e S) Q
=: \widehat{A}.
$$
Let us go back to estimating $p_{1,3}$ in \eqref{eq: p13}. Since $A$ and $\hat{A}$ are identically distributed,
and the event $\EE_S$ does not change when $S$ is replaced by $Q^\tran S Q$,
the conditional probability
$$
\Pr{ \inf_{x \in S_{1,2,3}} \|Ax\|_2 \le 2 \mu \e \amp \|(A_{(1,2,3)})^{-1}\| \le \frac{K_1}{\e} \amp \EE_S \;\Big|\; Q, R}
$$
does not change when $A$ is replaced by $\widehat{A}$.
Taking expectations with respect to $Q$ and $R$ we see that the full (unconditional) probability does not change either, so
\begin{equation}				\label{eq: p13 hat}
p_{1,3} \le \Pr{ \inf_{x \in S_{1,2,3}} \|\widehat{A}x\|_2 \le 2 \mu \e
  \amp \|(\widehat{A}_{(1,2,3)})^{-1}\| \le \frac{K_1}{\e} \amp \EE_S}.
\end{equation}

\subsubsection{Randomizing $D$}
%~~~~~~~~~~~~~~~~~

Let us try to understand the terms appearing in $\widehat{A}$.
We think of
$$
\til{D} := Q D Q^\tran
$$
as a randomized version of $D$ obtained by a random change of basis in the first three coordinates.
Then we can express
$$
\widehat{A} = Q^\tran \til{A} Q
\quad \text{where} \quad
\til{A} = R V \til{D} + I + \e S.
$$
Compared to $A = V D + I + \e S$, the random matrix $\til{A}$ incorporates the global perturbations $Q$ and $R$.
Thus we seek to replace $A$ with $\til{A}$ in our problem. To this end, let us simplify two quantities
that appear in \eqref{eq: p13 hat}.

First,
\begin{equation}				\label{eq: A Ahat}
\inf_{x \in S_{1,2,3}} \|\widehat{A}x\|_2 = \inf_{x \in S_{1,2,3}} \|\til{A}x\|_2
\end{equation}
since $Q(S_{1,2,3}) = S_{1,2,3}$ by definition and $Q^\tran \in SO(n)$.
Second, using that $Q$ and $R$ affect only the first three coordinates and since $D$ is diagonal, one checks that
\begin{equation}				\label{eq: Ahat simplified}
\widehat{A}_{(1,2,3)} = \til{A}_{(1,2,3)} = (V \til{D} + I + \e S)_{(1,2,3)} = (V D + I + \e S)_{(1,2,3)}.
\end{equation}
Similarly to previous matrices, we decompose $S$ as
\begin{equation}				\label{eq: S decomposition}
S =
\begin{bmatrix}
S_0 & -Z^\tran \\
Z & \ovr{S}
\end{bmatrix},
\quad \text{where }
S_0 \in \R^{3 \times 3}, \;
\ovr{S} \in \R^{(n-3) \times (n-3)}.
\end{equation}
Note that $S_0$, $\ovr{S}$ and $Z$ are independent, and that $Z \in \R^{(n-3) \times 3}$ is
a random matrix with all i.i.d. $N_\R(0,1)$ entries.

By \eqref{eq: Ahat simplified}, $\widehat{A}_{(1,2,3)}$ is independent of $S_0$, $Z$, $Q$, $R$; it only
depends on $V$ and $\ovr{S}$. Let us condition on $S_0$, $\ovr{S}$ and $V$, and thus fix
$\widehat{A}_{(1,2,3)}$ such that the invertibility condition in \eqref{eq: p13 hat} is satisfied, i.e. such that
$$
\|(\widehat{A}_{(1,2,3)})^{-1}\| \le \frac{K_1}{\e}
$$
(otherwise the corresponding conditional probability is automatically zero).
All randomness remains in the local perturbation $Z$ and the global perturbations $Q$, $R$.

\medskip

Let us summarize our findings. Recalling \eqref{eq: A Ahat}, we have shown that
\begin{equation}							\label{eq: p13 via Atil}
p_{1,3} \le \inf_{S_0, \ovr{S}, V} \Pr[Z, Q, R]{\inf_{x \in S_{1,2,3}} \|\til{A}x\|_2 \le 2 \mu \e \amp \EE_S}
\end{equation}
where
$$
\til{A} = R V \til{D} + I + \e S, \quad \til{D} := Q D Q^\tran,
$$
where $S$ is decomposed as in \eqref{eq: S decomposition},
and where the infimum is over all $\ovr{S}$, $V$ satisfying
\begin{equation}				\label{eq: minor well invertible}
\|(V D + I + \e S)_{(1,2,3)})^{-1}\| \le \frac{K_1}{\e}.
\end{equation}
Compared with \eqref{eq: p13}, we have achieved the following:
we introduced into the problem global perturbations $Q$, $R$ acting on the first three coordinates.
$Q$ randomizes the matrix $D$ and $R$ serves as a further global rotation.

\subsubsection{Reducing to invertibility of random $3 \times 3$ matrices}
%~~~~~~~~~~~~~~~~~

Let us decompose the matrix $\til{A} = R V \til{D} + I + \e S$
by revealing its first three rows and columns as before.
To this end, recall that
$R = \bigl[ \begin{smallmatrix}
R_0 & 0 \\
0 & \ovr{I}
\end{smallmatrix} \bigr]$
and
$Q = \bigl[ \begin{smallmatrix}
Q_0 & 0 \\
0 & \ovr{I}
\end{smallmatrix} \bigr]$.
We similarly decompose
$$
V =:
\begin{bmatrix}
V_0 & v \\
u & \ovr{V}
\end{bmatrix}
$$
and
\begin{equation}							\label{eq: D0}
D =:
\begin{bmatrix}
D_0 & 0 \\
0 & \ovr{D}
\end{bmatrix};
\quad \text{then} \quad
\til{D} =
\begin{bmatrix}
\til{D}_0 & 0 \\
0 & \ovr{D}
\end{bmatrix}
\text{ where }
\til{D}_0 := Q_0 D Q_0^\tran.
\end{equation}
Using these and the decomposition of $S$ in \eqref{eq: S decomposition}, we decompose
\begin{equation}				\label{eq: Atil decomposed}
\til{A} = R V \til{D} + I + \e S =
\begin{bmatrix}
R_0 V_0 \til{D}_0 + I_0 + \e S_0  & R_0 v \ovr{D} - \e Z^\tran \\
u \til{D}_0 + \e Z  & \ovr{V} \ovr{D} + \ovr{I} + \e \ovr{S}
\end{bmatrix}
=:
\begin{bmatrix}
H_0 & Y \\
X & \ovr{H}
\end{bmatrix}
\end{equation}
where $I_0$ denotes the identity in $\C^3$.

Note that
$$
\ovr{H}
= \ovr{V} \ovr{D} + \ovr{I} + \e \ovr{S}
= (V D + I + \e S)_{(1,2,3)}
$$
is a well invertible matrix by \eqref{eq: minor well invertible}, namely
\begin{equation}				\label{eq: Hbar invertible}
\|\ovr{H}^{-1}\| \le \frac{K_1}{\e}.
\end{equation}
The next lemma reduces invertibility of $\til{A}$ to invertibility of a $3 \times 3$ matrix $H_0 - Y \ovr{H}^{-1} X$.

\begin{lemma}[Invertibility of a matrix with a well invertible minor]					 \label{lem: well invertible minor}
  Consider a matrix
  $$
  H=
  \begin{bmatrix}
  H_0 & Y \\
  X & \ovr{H}
  \end{bmatrix}
  \quad \text{where } H_0 \in \C^{3 \times 3}, \; \ovr{H} \in \C^{(n-3) \times (n-3)}.
  $$
  Assume that
  $$
  \|\ovr{H}^{-1}\| \le L_1, \quad \|Y\| \le L_2
  $$
  for some $L_1, L_2 > 0$. Then
  $$
  \inf_{x \in S_{1,2,3}} \|Hx\|_2 \ge \frac{1}{\sqrt{n}(1 + L_1 L_2)} \; \smin(H_0 - Y \ovr{H}^{-1} X).
  $$
\end{lemma}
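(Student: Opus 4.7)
\medskip

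The plan is to use a standard Schur-complement-style projection. For any $x = (x', \ovr{x})^\tran$ with $x' \in \C^3$ and $\ovr{x} \in \C^{n-3}$, direct computation gives $Hx = (H_0 x' + Y \ovr{x},\; X x' + \ovr{H} \ovr{x})^\tran$. The key observation is that if we apply the $3 \times n$ matrix $P := [\,I_3\;\; -Y \ovr{H}^{-1}\,]$ to $Hx$, the $\ovr{x}$ terms cancel exactly:
\[
 P \cdot Hx = (H_0 x' + Y \ovr{x}) - Y \ovr{H}^{-1}(X x' + \ovr{H} \ovr{x}) = (H_0 - Y \ovr{H}^{-1} X) x'.
\]

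With this identity in hand, the proof consists of two short estimates. First, from below:
\[
\|P \cdot Hx\|_2 = \|(H_0 - Y \ovr{H}^{-1} X) x'\|_2 \ge \smin(H_0 - Y \ovr{H}^{-1} X) \cdot \|x'\|_2,
\]
where, for $x \in S_{1,2,3}$, we have $\|x'\|_2 \ge 1/\sqrt{n}$ by definition of $S_{1,2,3}$. Second, from above, using the block structure of $P$ together with the Cauchy-Schwarz inequality applied to $u - Y\ovr{H}^{-1} v$ for $(u,v)^\tran$ a unit vector, we get
\[
 \|P\| \le \sqrt{1 + \|Y \ovr{H}^{-1}\|^2} \le \sqrt{1 + L_1^2 L_2^2} \le 1 + L_1 L_2,
\]
and therefore $\|P \cdot Hx\|_2 \le (1 + L_1 L_2) \|Hx\|_2$. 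Combining the two inequalities and taking the infimum over $x \in S_{1,2,3}$ gives the claimed bound.

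Nothing here is really an obstacle: the whole argument is a one-line Schur-complement identity plus the two elementary norm bounds above. The only thing worth being careful about is that the projection $P$ is well-defined under the hypothesis $\|\ovr{H}^{-1}\| \le L_1 < \infty$ (so $\ovr{H}$ is invertible), and that the operator norm estimate $\|[\,I_3\;\; -Y \ovr{H}^{-1}\,]\| \le \sqrt{1 + \|Y\ovr{H}^{-1}\|^2}$ is used in the form $\le 1 + L_1 L_2$ to produce the clean denominator in the statement.
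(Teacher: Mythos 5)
Your proof is correct and uses essentially the same Schur-complement idea as the paper; the paper just unrolls the projection $P = [\,I_3\;\;-Y\ovr{H}^{-1}\,]$ into two block inequalities (bound $\ovr{H}^{-1}Xx_0+\ovr{x}$ first, then substitute into the top block via the triangle inequality), arriving at the same $\d(1+L_1L_2)$ estimate. Your packaging via the explicit row operator $P$ and the norm bound $\|P\|\le\sqrt{1+\|Y\ovr{H}^{-1}\|^2}\le 1+L_1L_2$ is a little cleaner but not a different argument.
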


\begin{proof}
Choose $x \in S_{1,2,3}$ which attains $\inf_{x \in S_{1,2,3}} \|Hx\|_2 =: \d$ and decompose it as
$$
x =:
\begin{bmatrix}
x_0 \\ \ovr{x}
\end{bmatrix}
\quad \text{where } x_0 \in \C^{3}, \; \ovr{x} \in \C^{n-3}.
$$
Then
$$
Hx =
\begin{bmatrix}
H_0 x_0 + Y \ovr{x} \\
X x_0 + \ovr{H} \ovr{x}
\end{bmatrix}.
$$
The assumption $\|Hx\|_2 = \d$ then leads to the system of inequalities
$$
\begin{cases}
\|H_0 x_0 + Y \ovr{x}\|_2 \le \d \\
\|X x_0 + \ovr{H} \ovr{x}\|_2 \le \d
\end{cases}
$$
We solve these inequalities in a standard way.
Multiplying the second inequality by $\|\ovr{H}^{-1}\|$, we obtain
$$
\|\ovr{H}^{-1} X x_0 + \ovr{x}\|_2 \le \d \|\ovr{H}^{-1}\| \le \d L_1,
$$
which informally means that $\ovr{x} \approx -\ovr{H}^{-1} X x_0$. Replacing $\ovr{x}$ with $-\ovr{H}^{-1} X x_0$
in the first equation, and estimating the error by the triangle inequality, we arrive at
\begin{align*}
\|H_0 x_0 - Y \ovr{H}^{-1} X x_0\|_2
  &\le \|H_0 x_0 + Y \ovr{x}\|_2 + \|Y \ovr{x} + Y \ovr{H}^{-1} X x_0\|_2 \\
  &\le \d + \|Y\| \cdot \|\ovr{x} + \ovr{H}^{-1} X x_0\|_2 \\
  &\le \d + L_2 \cdot \d L_1
  = \d (1 + L_1 L_2).
\end{align*}
Note that the left hand side is $\|(H_0 - Y \ovr{H}^{-1} X) x_0\|_2$, and that
$\|x_0\|_2 \ge 1/\sqrt{n}$ since $x \in S_{1,2,3}$.
By the definition of the smallest singular value, it follows that
$$
\smin(H_0 - Y \ovr{H}^{-1} X) \le \sqrt{n} \, \d (1 + L_1 L_2).
$$
Rearranging the terms concludes the proof of the lemma.
\end{proof}

In order to apply Lemma~\ref{lem: well invertible minor} for the matrix $\til{A}$ in \eqref{eq: Atil decomposed},
let us check that the boundedness assumptions are satisfied.
We already know that $\|\ovr{H}^{-1}\| \le K_1/\e$ from \eqref{eq: Hbar invertible}.
Further,
$$
\|Y\| = \|R_0 v \ovr{D} - \e Z^\tran\|.
$$
Here, $\|R_0\| = 1$, $\|v\| \le \|V\| \le 1$, $\|\ovr{D}\| \le \|D\| \le K$ by the assumption of the theorem,
and $\|Z\| \le \|S\| \le K_0 \sqrt{n}$ if the event $\EE_S$ holds. Putting these together, we have
$$
\|Y\| \le K + \e K_0 \sqrt{n} \le 2 K
$$
where the last inequality follows from \eqref{eq: K0}.
An application of Lemma~\ref{lem: well invertible minor} yields that, on the event $\EE_S$ one has
\begin{equation}							\label{eq: via YMX}
\inf_{x \in S_{1,2,3}} \|\til{A}x\|_2 \ge \frac{\e}{3 K K_1 \sqrt{n}} \; \smin(H_0 - Y M X)
\end{equation}
where
\begin{equation}				\label{eq: M}
M := \ovr{H}^{-1}, \quad \|M\| \le \frac{K_1}{\e}.
\end{equation}
We have reduced our problem to invertibility of the $3 \times 3$ matrix $H_0 - Y M X$.

\subsubsection{Dependence on the global perturbation $R$}
%~~~~~~~~~~~~~~~~~

Let us write our random matrix $H_0 - Y M X$ as a function of the global perturbation $R_0$
(which determines $R$). Recalling \eqref{eq: Atil decomposed},
we have
$$
H_0 - Y M X
  = R_0 V_0 \til{D}_0 + I_0 + \e S_0 - (R_0 v \ovr{D} - \e Z^\tran) M (u \til{D}_0 + \e Z)
  = a + R_0 b,
$$
where
\begin{align}
a &:= I_0 + \e S_0 + \e Z^\tran M u \til{D}_0 + \e^2 Z^\tran M Z, \nonumber\\
b &:= V_0 \til{D}_0 - v \ovr{D} M (u \til{D}_0 + \e Z).			\label{eq: b}
\end{align}
It will be helpful to simplify $a$ and $b$.
We shall first remove the terms $\e S_0$ and $\e^2 Z^\tran M Z$
from $a$, and then (in the next subsection) remove all other terms from $a$ and $b$ 
that depend on $Z$.

To achieve the first step, observe that on the event $\EE_S$, we have
$$
\|\e S_0\| \le \e K_0 \sqrt{n};
$$
\begin{align*}
\|\e^2 Z^\tran M Z\|
  &\le \e^2 \|Z\|^2 \|M\|
  \le \e^2 \|S\|^2 \|M\| \\
  &\le \e^2 \cdot K_0^2 n \cdot \frac{K_1}{\e}
    \quad \text{(by definition of $\EE_S$ and \eqref{eq: M})} \\
  &= \e K_0^2 K_1 n.
\end{align*}
Therefore we can approximate $a$ by the following simpler quantity:
\begin{gather}
a_0 := I_0 + \e Z^\tran M u \til{D}_0, 		\label{eq: a0}\\
\|a - a_0\|_2 \le \e K_0 \sqrt{n} + \e K_0^2 K_1 n
  \le 2 \e K_0^2 K_1 n. \nonumber
\end{gather}
Hence we can replace $a$ by $a_0$ in our problem of estimating
\begin{equation}							\label{eq: YMX via ab}
\smin(H_0 - Y M X) = \smin(a + R_0 b)
\ge \smin(a_0 + R_0 b) - 2 \e K_0^2 K_1 n.
\end{equation}
We have reduced the problem to the invertibility of the $3 \times 3$ random matrix
$a + R_0 b$.

\subsubsection{Removing the local perturbation $Z$}
%....................................
As we mentioned in the introduction, the argument in this case (when the minor is well invertible)
relies on global perturbations only.
This is the time when we remove the local perturbation $Z$ from our problem.
To this end, we express $a_0 + R_0 b$ as a function of $Z$ using \eqref{eq: a0} and \eqref{eq: b}:
$$
a_0 + R_0 b = L + Z^\tran \e M u \til{D}_0 - R_0 v \ovr{D} M \e Z, \\
$$
where
\begin{equation}								\label{eq: L}
L := I_0 + R_0 (V_0 - v \ovr{D} M u ) \til{D}_0.
\end{equation}
If we condition on everything but $Z$,
we can view $a_0 + R_0 b$ as a Gaussian perturbation of the fixed matrix $L$.
It will then be easy to show that $a_0 + R_0 b$ is well invertible whenever $L$ is.
This will reduce the problem to the invertibility of $L$; the local perturbation $Z$
will thus be removed from the problem.

Formally, let $\l_1 \in (0,1)$ be a parameter to be chosen later; we define the event
$$
\EE_L := \left\{ \smin(L) \ge \l_1 \right\}.
$$
Note that $\EE_L$ is determined by $R_0, Q_0$ and is independent of $Z$.

Let us condition on $R_0, Q_0$ satisfying $\EE_L$.
Then
\begin{equation}								\label{eq: through fZ}
\smin(a_0 + R_0 b) \ge \smin(L) \cdot \smin \left( L^{-1}(a_0 + R_0 b) \right)
\ge \l_1 \cdot \smin(I_0 + f(Z))
\end{equation}
where
\begin{equation}								\label{eq: fZ}
f(Z) := L^{-1} Z^\tran \e M u \til{D}_0 - L^{-1} R_0 v \ovr{D} M \e Z
\end{equation}
is a linear function of (the entries of) $Z$.
A good invertibility of $I_0+f(Z)$ is guaranteed by the following lemma.

\begin{lemma}[Invertibility of Gaussian perturbations]			\label{lem: invertibility Gaussian perturbation}
  Let $m \ge 1$, and let $f : \R^m \to \C^{3 \times 3}$ be a linear (matrix-valued) transformation.
  Assume that $\|f\| \le K$ for some $K \ge 1$, i.e. $\|f(z)\|_\HS \le K \|z\|_2$ for all $z \in \R^m$.
  Let $Z \sim N_\R (0, I_m)$. Then
  $$
  \Pr{\smin(I + f(Z)) \le t} \le C K t^{1/4}, \quad t > 0.
  $$
\end{lemma}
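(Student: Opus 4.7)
The starting point is the elementary identity $s_{\min}(A) \ge |\det A|/\|A\|^2$ valid for any $3 \times 3$ matrix (since $|\det A| = s_1 s_2 s_3 \le \|A\|^2 s_{\min}(A)$), which reduces the desired bound to a small-ball estimate for the polynomial $P(Z) := \det(I + f(Z))$ of degree at most $3$, combined with a tail bound on $\|I + f(Z)\| \le 1 + K\|Z\|_2$. Before anything else I would reduce to bounded effective dimension: $P$ depends on $Z$ only through $f(Z)$, which takes values in $\range(f) \subseteq \C^{3\times 3}$, a real subspace of dimension at most $18$. Decomposing $Z = Z' + Z''$ with $Z' \in (\ker f)^\perp$ and $Z'' \in \ker f$ (both independent Gaussians), we have $P(Z) = P(Z')$ and $f(Z) = f(Z')$, so one may assume $Z$ is standard Gaussian on $\R^{m'}$ with $m' \le 18$, while still $\|f\| \le K$.

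\textbf{Small-ball via Carbery--Wright and the main obstacle.} The Carbery--Wright anti-concentration inequality applied to the real polynomial $\Re P$ of degree at most $3$ yields
\[
\Pr{|P(Z)| \le \lambda} \le \Pr{|\Re P(Z)| \le \lambda} \le C \bigl(\lambda / \|\Re P\|_{L^2(\gamma_{m'})}\bigr)^{1/3}.
\]
Everything hinges on a \emph{uniform} lower bound $\|\Re P\|_{L^2} \ge c_0 > 0$, and this is the main obstacle. Here the reduction $m' \le 18$ is essential. For any real polynomial $Q$ of degree $\le 3$ on $\R^{m'}$, the orthonormal multivariate Hermite expansion $Q = \sum_{|\alpha| \le 3} c_\alpha H_\alpha$ combined with Cauchy--Schwarz gives
\[
|Q(0)|^2 \le \|Q\|^2_{L^2(\gamma_{m'})} \cdot \sum_{|\alpha|\le 3,\; \alpha \in \Z_{\ge 0}^{m'}} H_\alpha(0)^2,
\]
and the sum on the right is finite and bounded by an absolute constant once $m' \le 18$. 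Applied to $Q = \Re P$ with $\Re P(0) = 1$, this yields $\|\Re P\|^2_{L^2} \ge c_0^2 > 0$, as desired.

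\textbf{Combining and optimizing.} Standard Gaussian concentration on $\R^{m'}$ gives $\|Z\|_2 \le C_0(1 + s)$ off an event of probability $\le 2e^{-s^2/2}$, so $\|I + f(Z)\| \le 1 + C_0 K(1+s)$ there. Together with the above,
\[
\Pr{s_{\min}(I + f(Z)) \le t} \le 2e^{-s^2/2} + C\bigl(t(1 + K(1+s))^2\bigr)^{1/3}.
\]
Choosing $s = C_1\sqrt{\log(1/t)}$ makes the first term at most $t^{1/4}$ and bounds the second by $CK^{2/3}(t \log(1/t))^{1/3}$. The elementary observation that $(\log(1/t))^{1/3}\cdot t^{1/12}$ is bounded on $(0,1]$ then yields $\lesssim K^{2/3} t^{1/4} \le K t^{1/4}$ using $K \ge 1$; the case $t \ge 1$ is trivial. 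The crux of the whole argument is the dimension-free $L^2$ lower bound in Step~2, which is only available because we first cut the problem down to $m' \le 18$.
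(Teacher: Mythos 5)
Your proposal is correct but takes a genuinely different route from the paper. Your dimension reduction (cutting to $m' \le 18$ via the $\ker f$ decomposition) is the same in substance as the paper's (which isometrically realizes the $18$ defining vectors $a_{ij}, b_{ij}$ of $f$ inside $\R^{18}$ via rotation invariance), and both proofs use the same elementary identity $s_{\min}(A) \ge |\det A| / \|A\|^2$ together with a Gaussian tail bound on $\|Z\|_2$. Where you diverge is the anti-concentration engine. The paper works with Lebesgue measure: it applies the Brudnyi--Ganzburg Remez-type inequality (Theorem~\ref{thm: remez convex}) to the degree-$6$ polynomial $|\det(I+f(Z))|^2$ on the ball $\|Z\| \le R$, deducing from $\det(I+f(0))=1$ that the sublevel set $E$ has small Lebesgue volume, then comparing Lebesgue to Gaussian measure via the trivial density bound $(2\pi)^{-9} \le 1$, and finally optimizing $R$. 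You instead work intrinsically with the Gaussian measure and apply the Carbery--Wright inequality to $\Re\det(I+f(Z))$, whose degree is $3$; the role of the evaluation at the origin is then to furnish a uniform $L^2(\gamma_{m'})$ lower bound via Cauchy--Schwarz in the Hermite basis, with $\sum_{|\alpha|\le 3} H_\alpha(0)^2 = 1 + m'/2$ controlled by the effective dimension. Both approaches must exploit the bounded effective dimension to obtain an absolute constant: the paper through the $m$-dependence in the Remez factor $(4m|V|/|E|)^n$ and the Lebesgue-to-Gaussian comparison, you through the finiteness of $\sum H_\alpha(0)^2$. Your route avoids the Lebesgue-to-Gaussian translation and the ball-radius optimization is somewhat cleaner, at the cost of invoking Carbery--Wright rather than the Remez inequality the paper already has at hand for the low-dimensional analysis in Appendix~\ref{a: main low dim}. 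Both land at $\lesssim K^{2/3} t^{1/4} \le K t^{1/4}$. The one point worth double-checking in a final write-up is that Carbery--Wright indeed yields exponent $1/\deg = 1/3$ with the $L^2$ normalization (it does, e.g.\ taking $q = 2d$ in the standard formulation); with that in place the argument is complete.
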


We defer the proof of this lemma to Appendix~\ref{a: invertibility Gaussian perturbation}.

\medskip

 We will use Lemma \ref{lem: invertibility Gaussian perturbation} with $m=3(n-3)$, 
 rewriting the entries of the $(n-3) \times 3$ matrix $Z$ as coordinates of a vector in $\R^m$.
In order to apply this lemma, let us bound $\|f(Z)\|_\HS$ in \eqref{eq: fZ}.
To this end, note that
$\|L^{-1}\| \le \l_1^{-1}$
 if the  event $\EE_L$ occurs;
$\|M\| \le K_1/\e$ by \eqref{eq: M};
$\|u\| \le \|U\| = 1$; $\|v\| \le \|V\| \le 1$;
$\|\til{D}_0\| = \|D_0\| \le \|D\| \le K$;
$\|\ovr{D}\| \le \|D\| \le K$; $\|R_0\| = 1$. It follows that
$$
 \|f(Z)\|_\HS \le 2 \l_1^{-1} K  K_1 \|Z\|_\HS.
$$
An application of Lemma~\ref{lem: invertibility Gaussian perturbation} then yields
$$
\Pr[Z]{\smin(I_0 + f(Z)) \le t} \le C \l_1^{-1} K  K_1  \, t^{1/4}, \quad t > 0.
$$
Putting this together with \eqref{eq: through fZ}, we have shown the following.
Conditionally on $R_0, Q_0$ satisfying $\EE_L$, the matrix $a_0 + R_0 b$ is well invertible:
\begin{equation}								\label{eq: Pr singular}
\Pr[Z]{\smin(a_0 + R_0 b) \le \l_1 t} \le C \l_1^{-1} K K_1  \, t^{1/4}, \quad t > 0.
\end{equation}
This reduces the problem to showing that event $\EE_L$ is likely,
namely that the random matrix $L$ in \eqref{eq: L} is well invertible.
The local perturbation $Z$ has been removed from the problem.

\subsubsection{Invertibility in dimension $3$}  \label{sub: in dimension 3}
%.....................

In showing that $L$ is well invertible, the global perturbation $R_0$ will be crucial.
Recall that
$$
L = I_0 + R_0 B, \quad \text{where } B = (V_0 - v \ovr{D} M u ) \til{D}_0.
$$
Then $\smin(L) = \smin(B + R_0^{-1})$.
If we condition on everything but $R_0$,
we arrive at the invertibility problem for the perturbation of the fixed matrix $B$
by a random matrix $R_0$ uniformly distributed in $O(3)$.
This is the same kind of problem that our main theorems are about, however
for $3 \times 3$ matrices. But recall that in dimension $3$ the main result has
already been established in Theorem~\ref{thm: main low dim}. It guarantees
that $B+R_0^{-1}$ is well invertible whenever $B$ is not approximately complex orthogonal,
i.e. whenever $\|B B^\tran - I\| \gtrsim \|B\|^2$.
This argument reduces our problem to breaking complex orthogonality for $B$.

Formally, let $\l_2 \in (0,1)$ be a parameter to be chosen later; we define the event
$$
\EE_B := \left\{ \|B B^\tran - I\| \ge \l_2 \|B\|^2 \right\}.
$$
Note that $\EE_B$ is determined by $Q_0$ and is independent of $R_0$.

Let us condition on $Q_0$ satisfying $\EE_B$.
Theorem~\ref{thm: main low dim} then implies that
\begin{equation}								\label{eq: Pr EL}
\P_{R_0} (\EE_L^c)
= \Pr[R_0]{\smin(L) < \l_1}
= \Pr[R_0]{\smin(B + R_0^{-1}) < \l_1}
\le C(\l_1/\l_2)^c.
\end{equation}
This reduces the problem to showing that $\EE_B$ is likely, i.e. that
$B$ is not approximately complex orthogonal.

\subsubsection{Breaking complex orthogonality}
%........................

Recall that
$$
B = (V_0 - v \ovr{D} M u ) \til{D}_0
=: T \til{D}_0,
\qquad \til{D}_0 := Q_0 D_0 Q_0^\tran,
$$
where $Q_0$ is a random matrix uniformly distributed in $SO(3)$.
Thus $\til{D}_0$ is a randomized version of $D_0$ obtained by a random change of basis.

Let us condition on everything but $Q_0$, leaving $B$ fixed.
The following general result states that if $D_0$ is not near a multiple of identity,
then $T$ is not approximately complex orthogonal with high probability.

\begin{lemma}[Breaking complex orthogonality]				\label{lem: breaking orthogonality}
  Let $n \in \{2,3\}$.
  Let $D = \diag(d_i) \in \C^{n \times n}$.
  Assume that
  $$
  \max_i |d_i| \le K, \quad |d_1^2 - d_2^2| \ge \d
  $$
  for some $K, \d > 0$.
  Let $T \in \C^{n \times n}$.
  Let $Q$ be uniformly distributed in $SO(n)$ and consider the random matrix
  $B := A Q D Q^\tran$.
  Then
  \begin{equation}								\label{eq: prob BB}
  \Pr{ \|B B^\tran - I\| \le t \|B\|^2 } \le C (t K^2/\d)^c, \quad t > 0.
  \end{equation}
\end{lemma}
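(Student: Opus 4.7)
The plan is a Remez-type anti-concentration argument applied to the trigonometric polynomial $g(\theta) := \|BB^\tran - I\|_\HS^2$, after reducing to a one-parameter subgroup of $SO(n)$. For $n = 2$ we parameterize $Q = R(\theta) \in SO(2)$ directly. For $n = 3$ we decompose $Q = Q_v R(\theta)$, where $R(\theta)$ is the embedded rotation in the first two coordinates and $Q_v \in SO(3)$ satisfies $Q_v e_3 = v \in S^2$; Haar measure on $SO(3)$ factors as the product of the uniform measure on $S^2$ and the uniform measure on $[0, 2\pi)$. After fixing $v$ and setting $\widetilde T := T Q_v$ (so $\|\widetilde T\| = \|T\|$), the problem reduces to a one-parameter family.

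Since $R(\theta)$ acts only on the first two coordinates, the $\theta$-dependent part of $R(\theta) D^2 R(\theta)^\tran$ equals $\beta(\cos(2\theta)\sigma_3 + \sin(2\theta)\sigma_1)$ embedded in the $(1,2)$-block, where $\beta = (d_1^2-d_2^2)/2$ satisfies $|\beta| \ge \d/2$. Introducing the isotropic vector $w := e_1 + i e_2$ (for which $w w^\tran = \sigma_3 + i\sigma_1$ on the $(1,2)$-block), we obtain
\[
 BB^\tran - I \;=\; A \;+\; \frac{\beta}{2}\Bigl(e^{-2i\theta}(\widetilde T w)(\widetilde T w)^\tran + e^{2i\theta}(\widetilde T \bar w)(\widetilde T \bar w)^\tran\Bigr),
\]
where $A$ is $\theta$-independent. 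Hence $g(\theta)$ is a real trigonometric polynomial in $\theta$ of degree at most $4$. Orthogonality of the $\pm 2\theta$ frequencies, the Cauchy-Schwarz inequality, and the identity $\|\widetilde T w\|^2 + \|\widetilde T \bar w\|^2 = 2(\|\widetilde T e_1\|^2 + \|\widetilde T e_2\|^2)$ yield
\[
 \E_\theta\, g(\theta) \;=\; \|A\|_\HS^2 + \tfrac{|\beta|^2}{4}\bigl(\|\widetilde T w\|^4 + \|\widetilde T \bar w\|^4\bigr) \;\ge\; \tfrac{\d^2}{8}\bigl(\|\widetilde T e_1\|^2 + \|\widetilde T e_2\|^2\bigr)^2.
\]

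For $n = 2$, $\|\widetilde T e_1\|^2 + \|\widetilde T e_2\|^2 = \|T\|_\HS^2 \ge \|T\|^2$, so $\E_\theta g \gtrsim \d^2 \|T\|^4$. The classical Remez inequality for degree-$4$ trigonometric polynomials gives $|\{\theta : g(\theta) \le \epsilon\}| \lesssim (\epsilon/\E_\theta g)^{1/4}$. Combined with $\|M\|_\HS^2 \le n\|M\|^2$ and $\|B\|^2 \le K^2 \|T\|^2$, the event $\|BB^\tran - I\| \le t\|B\|^2$ forces $g \le n t^2 K^4 \|T\|^4$, and we conclude $\Pr{\|BB^\tran - I\| \le t\|B\|^2} \lesssim (tK^2/\d)^{1/2}$.

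For $n = 3$ the mean bound becomes $\E_\theta g \gtrsim \d^2 (\|T\|_\HS^2 - \|Tv\|^2)^2$. Since $\E_v \|Tv\|^2 = \|T\|_\HS^2/3$ for $v$ uniform on $S^2$, the event $\{\|T\|_\HS^2 - \|Tv\|^2 \gtrsim \|T\|^2\}$ has probability bounded below by an absolute constant, and on this event the argument above yields the same $(tK^2/\d)^{1/2}$ bound. The complementary event, where $T$ is essentially rank one with range concentrated along $v$, must be handled either by repeating the construction with a rotation in the $(1,3)$- or $(2,3)$-plane, or by directly exploiting the rank structure: in the rank-one limit $BB^\tran$ has rank one, so $BB^\tran - I$ has at least two eigenvalues equal to $-1$, forcing $\|BB^\tran - I\| \ge 1$ and trivializing the desired inequality unless $\|B\|^2 \gtrsim 1/t$. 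This case analysis for $n = 3$ is the main technical obstacle; the $n = 2$ case is handled cleanly by the Remez argument alone.
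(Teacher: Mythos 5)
Your $n=2$ argument is correct and quite clean: the explicit mean-value identity $\E_\theta g(\theta) \gtrsim \d^2 \|T\|_\HS^4$ (obtained via the isotropic vector $w$ and orthogonality of frequencies) gives the lower bound directly, after which Remez on $S^1$ finishes the case. This is a genuinely different and in some ways tidier route than the paper's, which applies Remez first to get uniform smallness of $\det$- and entry-type quantities over all $Q \in SO(n)$ and then derives a contradiction by testing specific rotations. The exact Remez exponent you quote ($1/4$ for a degree-$4$ trigonometric polynomial) is slightly off — Theorem~\ref{thm: remez sphere} with $m=1$ gives $\sup|f| \le (C/|E|)^{2\cdot 4}\sup_E|f|$, hence a $1/8$-type exponent — but this only affects the constant $c$, not the validity.

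The $n=3$ case, however, has a genuine gap that is not closed by what you wrote. After conditioning on $Q_v$, your mean bound reads $\E_\theta g \gtrsim \d^2(\|T\|_\HS^2-\|Tv\|^2)^2$, which degenerates as $v$ approaches the top singular direction of $T$ when $T$ is close to rank one. You argue that the favorable event $G := \{\|T\|_\HS^2-\|Tv\|^2 \gtrsim \|T\|^2\}$ has probability bounded \emph{below} by a constant, but this is the wrong direction: to bound $\Pr{g\le\epsilon} \le \Pr{g\le\epsilon, v\in G} + \Pr{v\notin G}$ one needs an \emph{upper} bound on $\Pr{v\notin G}$, and a constant lower bound on $\Pr{v\in G}$ gives nothing. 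The first fallback you propose (rotating in the $(1,3)$- or $(2,3)$-plane instead) does not apply, since the hypothesis only supplies $|d_1^2-d_2^2|\ge\d$ and not $|d_1^2-d_3^2|\ge\d$ or $|d_2^2-d_3^2|\ge\d$. The second fallback (exact rank one $\Rightarrow$ $\|BB^\tran-I\|\ge 1$) only produces the dichotomy ``either the event is empty or $\|T\|^2\gtrsim 1/(tK^2)$'' and does not bound the probability in the latter regime; nor does it extend to $T$ merely \emph{close} to rank one. The gap can in fact be repaired by a quantitative anti-concentration argument: when $T$ is close to rank one, $\|T\|_\HS^2-\|Tv\|^2$ is controlled by $1-\langle v,v_1\rangle^2$ for the top singular direction $v_1$ of $T$, and since the first coordinate of $v\sim \mathrm{Unif}(S^2)$ is uniform on $[-1,1]$ one gets $\Pr{\|T\|_\HS^2-\|Tv\|^2 < \alpha\|T\|^2}\lesssim \alpha$; one may then split on $\alpha$, apply Remez on the $v\in G(\alpha)$ part, and optimize $\alpha$. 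But this is not what your proposal says, and as written the $n=3$ case is incomplete. The paper sidesteps the whole issue by applying the Remez inequality on the full Hurwitz parametrization $S^1\times S^2$ rather than conditioning on $v$, so no near-rank-one degeneracy ever arises.
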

We defer the proof of this lemma to Appendix~\ref{a: random basis}.

\medskip

Let us apply Lemma~\ref{lem: breaking orthogonality}  for $D_0 = \diag(d_1, d_2, d_3)$.
Recall that the assumptions of the lemma are satisfied by \eqref{eq: di} and \eqref{eq: di assumption}.
Then an application of the lemma with $t=\l_2$ yields that
\begin{equation}							\label{eq: Pr EB}
\P_{Q_0} (\EE_B^c)
= \Pr[Q_0]{\|B B^\tran - I\| < \l_2 \|B\|^2}
\le C(\l_2 K^2/\d)^c.
\end{equation}
This was the remaining piece to be estimated, and now we can collect all pieces together.

\subsubsection{Putting all pieces together}
%~~~~~~~~~~~~~~~~~

By \eqref{eq: Pr EL} and \eqref{eq: Pr EB}, we have
\begin{align*}
\P_{Q_0, R_0} (\EE_L^c)
  &= \E_{Q_0} \P_{R_0} (\EE_L^c | Q_0)
  \le \E_{Q_0} \P_{R_0} (\EE_L^c | Q_0) \; \one_{\{Q_0 \text{ satisfies } \EE_B\}} + \P_{Q_0} (\EE_B^c) \\
  &\le C(\l_1/\l_2)^c + C(\l_2 K^2/\d)^c.
\end{align*}
By a similar conditional argument, this estimate and \eqref{eq: Pr singular} yield
$$
\Pr[Q_0, R_0, Z]{\smin(a_0 + R_0 b) \le \l_1 t \amp \EE_S}
\le q,
$$
where
\begin{equation}							\label{eq: q}
q := C \l_1^{-1} K  K_1  \, t^{1/4} + C(\l_1/\l_2)^c + C(\l_2 K^2/\d)^c.
\end{equation}

Obviously, we can choose $C >1$ and $c<1$.
By \eqref{eq: YMX via ab},
$$
\Pr[Z, Q_0, R_0]{\smin(H_0 - Y M X) < \l_1 t - 2 \e K_0^2 K_1 n \amp \EE_S} \le q
$$
and further by \eqref{eq: via YMX}, we obtain
$$
\Pr[Z, Q_0, R_0]{\inf_{x \in S_{1,2,3}} \|\til{A}x\|_2 < \frac{\e(\l_1 t - 2 \e K_0^2 K_1 n)}{3 K K_1 \sqrt{n}}
  \amp \EE_S} \le q.
$$
Thus we have successfully estimated $p_{1,3}$ in \eqref{eq: p13 via Atil} and in \eqref{eq: p13}:
\begin{equation}				\label{eq: p13 final estimate}
p_{1,3} \le q \quad \text{for } \mu = \frac{\l_1 t - 2 \e K_0^2 K_1 n}{3 K K_1 \sqrt{n}}
\end{equation}
and where $q$ is defined in \eqref{eq: q}.
By an identical argument, the same estimate holds for all $p_{1,i}$ in the sum \eqref{eq: sum p1i}:
\begin{equation}				\label{eq: p1i final estimate}
p_{1,i} \le q, \quad i = 3,\ldots, n.
\end{equation}

Summarizing, we achieved the goal of this section, which was to show that $A$ is well invertible on the set $S_{1,2}$
in the case when there is a well invertible minor $A_{(1,2,i)}$.

\begin{remark}[Doing the same for $(n-2) \times (n-2)$ minors]			 \label{rem: n-2 minors}
  One can carry on the argument of this section in a similar way for $(n-2) \times (n-2)$ minors,
  and thus obtain the same estimate for the probability
  $$
  \Pr{ \inf_{x \in S_{1,2}} \|Ax\|_2 \le 2 \mu \e \amp \|(A_{(1,2)})^{-1}\| \le \frac{K_1}{\e} \amp \EE_S }.
  $$
  as we obtained in \eqref{eq: p13 final estimate} for the probability $p_{1,3}$ in \eqref{eq: p13}.
\end{remark}

\subsection{When all minors are poorly invertible: going $1+2$ dimensions up}		 \label{s: poorly invertible}
%....................

In this section we estimate the probability $p_{1,0}$ in the decomposition \eqref{eq: sum p1i}, i.e.
\begin{equation}				\label{eq: p10}
p_{1,0} = \Pr{ \inf_{x \in S_{1,2}} \|Ax\|_2 \le 2 \mu \e \amp \|(A_{(1,2,i)})^{-1}\| > \frac{K_1}{\e}
    \; \forall i \in [3:n] \amp \EE_S }.
\end{equation}

\subsubsection{Invertibility of a matrix with a poorly invertible minor}
%~~~~~~~~~~~~~~~~~

The following analog of Lemma~\ref{lem: well invertible minor} for a poorly invertible minor will be helpful
in estimating $p_{1,0}$.
Unfortunately, it only works for $(n-1) \times (n-1)$ minors rather than
$(n-3) \times (n-3)$ or $(n-2) \times (n-2)$ minors.

\begin{lemma}[Invertibility of a matrix with a poorly invertible minor]		 \label{lem: poorly invertible minor}
  Consider an $n \times n$ matrix
  $$
  H=
  \begin{bmatrix}
  H_0 & Y \\
  X & \ovr{H}
  \end{bmatrix}
  \quad \text{where } H_0 \in \C, \; \ovr{H} \in \C^{(n-1) \times (n-1)}.
  $$
  Assume that $X \sim N_\R(\nu, \e^2 I_{n-1})$ for some fixed $\nu \in \C^{n-1}$ and
  $\e>0$.\footnote{Although $X$ is complex-valued, $X-\nu$ is real valued variable distributed
  according to $N(0, \e^2 I_{n-1})$.}
  We assume also that $\ovr{H}$ is a fixed matrix satisfying
  $$
  \|\ovr{H}^{-1}\| \ge L
  $$
  for some $L > 0$, while $H_0$ and $Y$ may be arbitrary,
  possibly random and correlated with $X$. Then
  $$
  \Pr{ \inf_{x \in S_1} \|Hx\|_2 \le \frac{t \e}{\sqrt{n}} - \frac{1}{L} } \le C t \sqrt{n}, \quad t > 0.
  $$
\end{lemma}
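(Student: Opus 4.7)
The plan is to mirror the argument of Section~\ref{s:M large}: exploit the poor invertibility of the bottom-right minor $\overline{H}$ to produce a deterministic ``test direction'' that killls the $\overline{H}$-block, and then reduce the problem to a one-dimensional small-ball estimate for a Gaussian linear form in $X$. Since $H_0$ and $Y$ are allowed to be random and correlated with $X$, the test direction must be built only from $\overline{H}$; this is exactly what the hypothesis $\|\overline{H}^{-1}\|\ge L$ provides.

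First I would pick a unit vector $\tilde w\in\C^{n-1}$ with $\|\overline{H}^{\tran}\tilde w\|_2\le 1/L$. This exists because $\smin(\overline{H}^{\tran})=\smin(\overline{H})=1/\|\overline{H}^{-1}\|\le 1/L$ (a square matrix and its transpose share singular values). Note $\tilde w$ depends only on the fixed matrix $\overline{H}$. Form $w=\bigl(\begin{smallmatrix}0\\ \tilde w\end{smallmatrix}\bigr)\in\C^n$, a unit vector. For an arbitrary $x=\bigl(\begin{smallmatrix}x_1\\ \tilde x\end{smallmatrix}\bigr)\in S_1$, a direct block computation gives
$$
w^{\tran}Hx = x_1\,\tilde w^{\tran}X + \tilde w^{\tran}\overline{H}\,\tilde x,
$$
and the triangle inequality together with $|x_1|\ge 1/\sqrt n$, $\|\tilde x\|_2\le 1$, and $\|\tilde w^{\tran}\overline{H}\|_2=\|\overline{H}^{\tran}\tilde w\|_2\le 1/L$ yields
$$
\|Hx\|_2 \ \ge\ |w^{\tran}Hx| \ \ge\ \tfrac{1}{\sqrt n}\,|\tilde w^{\tran}X| - \tfrac{1}{L}.
$$
Since the right-hand side is independent of $x$, we obtain the deterministic bound $\inf_{x\in S_1}\|Hx\|_2 \ge \tfrac{1}{\sqrt n}|\tilde w^{\tran}X|-\tfrac{1}{L}$, so the desired event is contained in $\{|\tilde w^{\tran}X|\le t\e\}$.

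It remains to prove a small-ball bound for the complex-valued Gaussian $\tilde w^{\tran}X$. Writing $X=\nu+\e g$ with $g\sim N_{\R}(0,I_{n-1})$, we have $\tilde w^{\tran}X = \tilde w^{\tran}\nu + \e\,\tilde w^{\tran}g$. Decomposing $\tilde w=a+ib$ with $a,b\in\R^{n-1}$ and $\|a\|_2^2+\|b\|_2^2=1$, at least one of $a,b$ has norm $\ge 1/\sqrt 2$; say $\|a\|_2\ge 1/\sqrt 2$. Then $\Re(\tilde w^{\tran}X) = c_0 + \e\,a^{\tran}g$ for a fixed real $c_0$, and $a^{\tran}g\sim N_{\R}(0,\|a\|_2^2)$ has density bounded by $1/(\|a\|_2\sqrt{2\pi})\le 1/\sqrt{\pi}$. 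A density calculation gives
$$
\Pr{|\tilde w^{\tran}X|\le s} \ \le\ \Pr{|\Re(\tilde w^{\tran}X)|\le s} \ \le\ C s/\e,\qquad s>0,
$$
and taking $s=t\e$ produces $\Pr{|\tilde w^{\tran}X|\le t\e}\le Ct$, which is stronger than the claimed $Ct\sqrt n$. Combining with the deterministic bound completes the proof.

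There is no real obstacle here; the only thing to watch is that $\tilde w$ may have nontrivial imaginary part, which is handled by the real/imaginary split in the small-ball step. The key structural point is that $\tilde w$ is selected before $X$ is sampled, which is possible precisely because $\overline{H}$ is assumed fixed while $H_0$ and $Y$ are allowed to be arbitrary.
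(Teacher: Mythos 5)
Your proof is correct, but it takes a genuinely different route from the paper's. The paper works directly with a near-kernel vector $x=(x_0;\ovr{x})$ attaining the infimum: from the bottom block $\|Xx_0+\ovr{H}\ovr{x}\|_2\le\delta$ it multiplies by $\ovr{H}^{-1}$ to force $\|\ovr{x}\|_2$ to be large, and controls $\|\ovr{H}^{-1}X\|_2$ via a Hilbert--Schmidt concentration estimate (Lemma~\ref{lem: concentration HS}); it is precisely this HS step that produces the $\sqrt{n}$ factor in the stated bound. You instead mirror Section~\ref{s:M large}: build a test direction $\tilde w$ in the near left-kernel of $\ovr{H}^\tran$ (using that $\ovr{H}$ and $\ovr{H}^\tran$ share singular values), pair against $w=(0;\tilde w)$, kill the $\ovr{H}$-block up to error $1/L$, and reduce to a one-dimensional small-ball bound for the Gaussian scalar $\tilde w^\tran X$. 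This buys two things: (i) it avoids Lemma~\ref{lem: concentration HS} entirely, replacing it by a single bounded-density estimate, and (ii) it yields the sharper probability bound $Ct$ rather than $Ct\sqrt{n}$ (which of course still implies the stated lemma). Both proofs hinge on the same structural point you identify -- that $\ovr{H}$ is deterministic while $H_0, Y$ may be correlated with $X$, so $\tilde w$ (respectively $\ovr{H}^{-1}$) can be chosen before $X$ is sampled.
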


\begin{proof}
Choose $x \in S_1$ which attains $\inf_{x \in S_1} \|Hx\|_2 =: \d$ and decompose it as
$$
x =:
\begin{bmatrix}
x_0 \\ \ovr{x}
\end{bmatrix}
\quad \text{where } x_0 \in \C, \; \ovr{x} \in \C^{n-1}.
$$
As in the proof of Lemma~\ref{lem: well invertible minor}, we deduce that
$$
\|\ovr{H}^{-1} X x_0 + \ovr{x}\|_2 \le \d \|\ovr{H}^{-1}\|.
$$
This yields
$$
\|\ovr{x}\|_2 \ge \|\ovr{H}^{-1} X x_0\|_2 - \d \|\ovr{H}^{-1}\|.
$$
Note that
$$
\|\ovr{H}^{-1} X x_0\|_2 = |x_0| \, \|\ovr{H}^{-1} X\| \ge \frac{1}{\sqrt{n}} \, \|\ovr{H}^{-1} X\|,
$$
where the last inequality is due to $x \in S_1$.

Further, we have $\|\ovr{H}^{-1} X\| \sim \|\ovr{H}^{-1}\|_\HS$ by standard concentration techniques.
We state and prove such result in Lemma~\ref{lem: concentration HS} in Appendix~B.
It yields that
$$
\Pr{ \|\ovr{H}^{-1} X\|_2 \le t \e \|\ovr{H}^{-1}\|_\HS } \le C t \sqrt{n}, \quad t > 0.
$$
Next, when this unlikely event does not occur, i.e. when $\|\ovr{H}^{-1} X\|_2 > t \e \|\ovr{H}^{-1}\|_\HS$,
we have
$$
\|\ovr{x}\|_2 \ge \frac{t \e}{\sqrt{n}} \|\ovr{H}^{-1}\|_\HS - \d \|\ovr{H}^{-1}\|
\ge \biggl( \frac{t \e}{\sqrt{n}} - \d \biggr) \|\ovr{H}^{-1}\|
\ge \biggl( \frac{t \e}{\sqrt{n}} - \d \biggr) L.
$$
On the other hand, $\|\ovr{x}\|_2 \le \|x\|_2 = 1$. Substituting and rearranging the terms yields
$$
\d \ge \frac{t \e}{\sqrt{n}} - \frac{1}{L}.
$$
This completes the proof of Lemma~\ref{lem: poorly invertible minor}.
\end{proof}

\subsubsection{Going one dimension up}
%~~~~~~~~~~~~~~~~~

As we outlined in Section~\ref{s: intro poorly invertible}, the probability $p_{1,0}$ in \eqref{eq: p10}
will be estimated in two steps. At the first step, which we carry on in this section,
we explore the condition that all $(n-3) \times (n-3)$ minors of $A_{1,2}$ are poorly invertible:
$$
\|(A_{(1,2,i)})^{-1}\| > \frac{K_1}{\e} \quad \forall i \in [3:n].
$$
Using Lemma~\ref{lem: poorly invertible minor} in dimension $n-2$, we will conclude that
the matrix $A_{(1,2)}$ is well invertible on the set of vectors with a large $i$-th coordinate.
Since this happens for all $i$, the matrix $A_{(1,2)}$ is well invertible on all vectors,
i.e. $\|A_{(1,2)}^{-1}\|$ is not too large. This step will thus carry us one dimension up,
from poor invertibility of all minors in dimension $n-3$ to a good invertibility of the minor
in dimension $n-2$.

Since we will be working in dimensions $[3: n]$ during this step, we introduce the appropriate
notation analogous to \eqref{eq: sphere decomposition} restricted to these dimensions.
Thus $S^{[3:n]}$ will denote the unit Euclidean sphere in $\C^{[3:n]}$, so
\begin{equation}				\label{eq: sphere decomposition 3:n}
S^{[3:n]} = \bigcup_{i \in [3:n]} S_i^{[3:n]}
\quad \text{where } S_i^{[3:n]} := \bigl\{ x \in S^{[3:n]} :\; |x_i| \ge n^{-1/2} \bigr\}.
\end{equation}
We apply Lemma~\ref{lem: poorly invertible minor} for
$$
H = A_{(1,2)}, \quad \ovr{H} = A_{(1,2,3)}, \quad L = \frac{K_1}{\e}, \quad t = \frac{2\sqrt{n}}{K_1}.
$$
Recall from Section~\ref{s: local perturbation} that
\[
  A
    = \begin{bmatrix}
   * & * & * &  \hdots  \\
   * & * & * &  \hdots  \\
   * & * & H_0 & Y \\
   \vdots & \vdots & X & \ovr{H}
    \end{bmatrix}
    = VD + I + \e S,
\]
where $S$ is a skew-symmetric Gaussian random matrix (with i.i.d. $N_\R(0,1)$ above-diagonal entries).
Let us condition on everything except the entries $S_{ij}$ with $i \in [4:n]$, $j=3$ and with
$i=3$, $j \in [4:n]$,
since these entries define the parts $X$, $Y$ of $H$.
 Note that $X=\nu+ \e S^{(3)}$, where the vector $\nu \in C^{n-3}$ is independent of $S$, and $S^{(3)}$ is a standard real Gaussian vector with coordinates $S_{4,3}, \ldots, S_{n,3}$.

Lemma~\ref{lem: poorly invertible minor} used with $t=\frac{2 \sqrt{n}}{K_1}$ then implies that if
$\|(A_{(1,2,3)})^{-1}\| > K_1/\e$ then
$$
\Pr[X,Y]{ \inf_{x \in S_3^{[3:n]}} \|A_{(1,2)} x\|_2 \le \frac{\e}{K_1} } \le \frac{Cn}{K_1}.
$$
Therefore, unconditionally,
$$
\Pr{ \inf_{x \in S_3^{[3:n]}} \|A_{(1,2)} x\|_2 \le \frac{\e}{K_1}
  \amp \|(A_{(1,2,3)})^{-1}\| > \frac{K_1}{\e} } \le \frac{Cn}{K_1}.
$$
By an identical argument, the dimension $3$ here can be replaced by any other dimension $i \in [3:n]$.
Using a union bound over these $i$ and \eqref{eq: sphere decomposition 3:n}, we conclude that
$$
\Pr{ \inf_{x \in S^{[3:n]}} \|A_{(1,2)} x\|_2 \le \frac{\e}{K_1}
  \amp \|(A_{(1,2,i)})^{-1}\| > \frac{K_1}{\e} \; \forall i \in [3:n] }
\le \frac{C n^2}{K_1}.
$$
This is of course the same as
\begin{equation}				\label{eq: 1 dim up}
\Pr{ \|(A_{(1,2)})^{-1}\| > \frac{K_1}{\e}
  \amp \|(A_{(1,2,i)})^{-1}\| > \frac{K_1}{\e} \; \forall i \in [3:n] }
\le \frac{C n^2}{K_1}.
\end{equation}
This concludes the first step: we have shown that in the situation of $p_{1,0}$
when all minors $A_{(1,2,i)}$ are poorly invertible, the minor $A_{(1,2)}$ is well invertible.

\subsubsection{Going two more dimensions up}
%~~~~~~~~~~~~~~~~~

At the second step, we move from the good invertibility of the minor $A_{(1,2)}$
that we have just established to a good invertibility of the full matrix $A$.
But we have already addressed exactly this problem
in Section~\ref{s: well invertible}, except for the minor $A_{(1,2,3)}$.
So no new argument will be needed in this case.

Formally, combining \eqref{eq: p10}, \eqref{eq: 1 dim up}, and the estimate
\eqref{eq: S norm} on $\EE_S$, we obtain
$$
p_{1,0}
\le \Pr{ \inf_{x \in S_{1,2}} \|Ax\|_2 \le 2 \mu \e \amp \|(A_{(1,2)})^{-1}\| \le \frac{K_1}{\e} \amp \EE_S }
  + \frac{C n^2}{K_1} + 2 \exp(-c K_0^2 n).
$$
The probability here is very similar to the probability $p_{1,3}$ in \eqref{eq: p13}
and is bounded in the same way as in \eqref{eq: p13 final estimate},
see Remark~\ref{rem: n-2 minors}. We conclude that
\begin{equation}							\label{eq: p10 final estimate}
p_{1,0} \le q + \frac{C n^2}{K_1} + 2 \exp(-c K_0^2 n)
\end{equation}
where $\mu$ and $q$ are defined in \eqref{eq: p13 final estimate} and \eqref{eq: q} respectively.

We have successfully estimated $p_{1,0}$ in the sum \eqref{eq: sum p1i}.
This achieves the goal of this section, which was to show that $A$ is well invertible on the set $S_{1,2}$
in the case when there all minors $A_{(1,2,i)}$ are poorly invertible.

\subsection{Combining the results for well and poorly invertible minors}
%....................

At this final stage of the proof, we combine the conclusions of Sections~\ref{s: well invertible}
and \ref{s: poorly invertible}.

Recall from \eqref{eq: sum p1i} that
$$
p_1 \le \sum_{i=3}^n p_{1,i} + p_{1,0}.
$$
The terms in this sum were estimated in \eqref{eq: p1i final estimate} and in \eqref{eq: p10 final estimate}.
Combining these, we obtain
$$
p_1 \le n q + \frac{C n^2}{K_1} + 2 \exp(-c K_0^2 n).
$$
An identical argument produces the same estimate for all $p_i$, $i=2,\ldots,n$ in \eqref{eq: sum pi}.
Thus
\begin{align}			\label{eq: p estimated}
p
  &= \Pr{ \smin(D+U) \le \mu \e }
  \le \sum_{i=1}^n p_i + 2 \exp(-c K_0^2 n) \nonumber\\
  &\le n^2 q + \frac{C n^3}{K_1} + 2 (n+1) \exp(-c K_0^2 n).
\end{align}
Recall that $\mu$ and $q$ are defined in \eqref{eq: p13 final estimate} and \eqref{eq: q} respectively, and $C \ge 1$, $c\le 1$ in these inequalities.

Finally, for $t \in (0,1)$, we choose the parameters $K_0 > 1$, $K_1 > 1$, $\e, \l_1, \l_2 \in (0,1)$
to make the expression in \eqref{eq: p estimated} reasonably small.
For example, one can choose
\begin{gather*}
K_0 = \log(1/t), \quad
K_1 = t^{-1/16}, \\
\l_1 = t^{1/16}, \quad
\l_2 = t^{1/32}, \quad
\e = \frac{t^{9/8}}{24 K \log^2(1/t)n^{3/2}}.
\end{gather*}
With this choice, we have
\begin{gather*}
\mu \ge \frac{t^{9/8}}{6 K \sqrt{n}} \ge 2 K_0^2 n \e, \quad
q \le 3 t^{c/32} (K^2/\d)^c, \\
p \le C_1 n^3 t^{c/32} (K^2/\d)^c,
\end{gather*}
and so \eqref{eq: K0} and \eqref{eq: mu is big} are satisfied, and \eqref{eq: C0} is satisfied whenever $t< e^{-C_0}$.
Summarizing, we have shown that
$$
\Pr{ \smin(D+U) \le \frac{t^{9/4}}{144 K^2 \log^4 (1/t) n^2} }
\le  C_1 n^3 t^{c/32} (K^2/\d)^c.
$$
This quickly leads to the conclusion of Theorem~\ref{thm: main orth full}.  \qed

\section{Application to the Single Ring Theorem: proof of Corollary~\ref{cor: single ring}}		\label{s: single ring theorem}
%---------------

In this section we prove Corollary~\ref{cor: single ring}, which states
that condition (SR3) can be completely eliminated from the Single Ring Theorem.
Let $D_n$ be a sequence of deterministic $n \times n$ diagonal matrices.
(The case of random $D_n$ can be reduced to this by conditioning on $D_n$.)
If $z \neq 0$, then
 \begin{equation}  \label{eq: z out}
  \smin(U_n D_n V_n-z I_n)= |z| \cdot \smin ((1/z) D_n - U_n^{-1}V_n^{-1}),
 \end{equation}
where the matrix $ U_n^{-1}V_n^{-1}$ is uniformly distributed in $U(n)$ or $O(n)$.
Let us first consider the case where the matrices $D_n$ are well invertible,
thus we assume that
$$
r := \inf_{n \in \N} \smin(D_n) > 0.
$$
In the complex case, an application of Theorem~\ref{thm: main unit} yields the inequality
  \begin{equation}  \label{eq: uniform outside 0}
    \Pr{\smin(U_n D_n V_n-z I_n) \le t r} \le t^c n^C,	 \quad 0 \le t < 1/2,
  \end{equation}
which holds (uniformly) for all $z \in \C$, and which implies condition (SR3).
Indeed, Theorem~\ref{thm: main unit} combined with \eqref{eq: z out} imply
the inequality \eqref{eq: uniform outside 0} for $|z| \ge r/2$.
In the disc $|z|< r/2$ we use the trivial estimate
\[
\smin(U_n D_n V_n-z I_n) \ge \smin(U_n D_n V_n)- |z| > r/2,
\]
which again implies \eqref{eq: uniform outside 0}.

Now consider the real case, still under the assumption that $r>0$.
Condition (SR1) allows us to assume that $\|D_n\| \le K$ for some $K$ and for all $n$.
Condition (SR2) and \cite[Lemma 15]{GKZ} imply that  $|s_k(D_n) -1| \ge 1/(4 \k_1)$ for some $1 \le k \le n$.
Hence
\[
\inf_{V \in O(n)} \|D_n -V\| \ge \frac{1}{4 \k_1}.
\]
An application of Theorem \ref{thm: main orth full} together with \eqref{eq: z out} shows
that inequality \eqref{eq: uniform outside 0} holds, which in turn implies condition (SR3).
In this argument, we considered the matrix $(1/z)D_n$, which has {\em complex} entries. This was the reason to prove more general Theorem \ref{thm: main orth full} instead of the simpler Theorem \ref{thm: main orth}.

It remains to analyze the case where the matrices $D_n$ are poorly invertible,
i.e. when $\inf_{n \in \N} \smin(D_n) =0$.
In this case the condition (SR3) can be removed from the Single Ring Theorem
using our results via the following argument, which was communicated
to the authors by Ofer Zeitouni \cite{Z personal}.
The proof of the Single Ring Theorem in \cite{GKZ} uses condition (SR3) only once,
specifically in the proof of \cite[Proposition 14]{GKZ}
which is one of the main steps in the argument.
Let us quote this proposition.

\begin{prop14}[\cite{GKZ}]
  Let $\nu_z^{(n)}$ be the symmetrized\footnote{Symmetrization here means that we consider the set of the
  singular values $s_k$ together with their opposites $-s_k$.}
  empirical measure of the singular values of $U_n D_n V_n-z I_n$.
  Assume that the conditions (SR1), (SR2), and (SR3) of the Single Ring Theorem hold.
  \begin{enumerate}[(i)]
    \item There exists a sequence of events $\Omega_n$ with $\P(\Omega_n) \to 1$
      such that for Lebesgue almost every $z \in \C$, one has
      \begin{equation} \label{cond: SR1}
      \lim_{\e \to 0} \limsup_{n \to \infty}
      \E \int_0^{\e} \mathbf{1}_{\Omega_n} \log |x| \, d \nu_z^{(n)}(x) =0.
      \end{equation}
      Consequently, for almost every $z \in \C$ one has
      \begin{equation} \label{cond: SR2}
      \int_{\R} \log |x| \, d \nu_z^{(n)}(x)
      \to
      \int_{\R} \log |x| \, d \nu_z(x)
      \end{equation}
      for some limit measure $\nu_z$  in probability.
    \item For any $R>0$ and for any smooth deterministic function $\varphi$
    compactly supported in $B_R=\{z \in \C: |z| \le R \}$, one has
        \begin{equation} \label{cond: SR3}
          \int_{\C} \varphi(z) \int_{\R} \log |x| \, d \nu_z^{(n)}(x) \, d m(z)
          \to
          \int_{\C} \varphi(z) \int_{\R} \log |x| \, d \nu_z(x) \, d m(z).
        \end{equation}
  \end{enumerate}
\end{prop14}

Our task is to remove condition (SR3) from this proposition. Since the argument below is the same for unitary and orthogonal matrices, we will not distinguish between the real and the complex case.

\medskip

Even without assuming (SR3),
part (i) can be deduced from Theorems~\ref{thm: main unit} and \ref{thm: main orth}
by the argument of \cite{GKZ}, since condition \eqref{cond: SR1} pertains to a fixed $z$.

It remains to prove (ii) without condition (SR3).
To this end, consider the probability measure $\tilde{\mu}$ with the density
\begin{equation} \label{eq: density  of mu-tilde}
 \frac{d \tilde{\mu}}{d m} (z) = \frac{1}{2 \pi} \D \left(  \int_{\R} \log |x| \, d \nu_z(x) \right).
\end{equation}
This measure was introduced and studied in \cite{GKZ}.
After the Single Ring Theorem is proved it turns out that $\tilde{\mu}=\mu_e$,
where $\mu_e$ is the limit of the empirical measures of eigenvalues.
However, at this point of the proof this identity is not established, so we have to distinguish between these two measures.

It was shown in \cite{GKZ} that for any smooth compactly supported function $f: \C \to \C$ such
that condition (SR3) holds with some $\d, \d'>0$ for almost all $z \in \supp(f)$, one has
    \begin{equation}  \label{eq: weak limit}
     \int_{\C} f(z) \, d \mu_e^{(n)}(z) \to
     \int_{\C} f(z) \, d \tilde{\mu}(z).
    \end{equation}
The argument in the beginning of this section shows that if $Q:=\supp(f) \subset B_R \setminus B_r$
for some $r>0$, then \eqref{eq: uniform outside 0} holds uniformly on $Q$,
and therefore \eqref{eq: weak limit} holds for such $f$.

The proof of \cite[Theorem 1]{GKZ} shows that it is enough to establish (ii)
for all smooth compactly supported functions $\varphi$ that can be represented as $\varphi=\D \psi$,
where $\psi$ is another smooth compactly supported function.
Assume that (ii) fails, thus
there exist $\e>0$, a subsequence $\{n_k\}_{k=1}^{\infty}$, and a  function $\psi: \C \to \C$ as above, such that
   \begin{equation} \label{eq: no convergence}
         \left|  \int_{\C} \D \psi(z) \int_{\R} \log |x| \, d \nu_z^{(n_k)}(x) \, d m(z)
          -
          \int_{\C} \D \psi(z) \int_{\R} \log |x| \, d \nu_z(x) \, d m(z)\right |
          >\e.
   \end{equation}
   Recall the following identity \cite[formula (5)]{GKZ}:
   \begin{equation}				\label{eq: Krishnapur}
    \int_{\C} \psi(z) \, d \mu_e^{(n)}(z) =
    \frac{1}{2 \pi}\int_{\C} \D \psi(z)  \int_{\R} \log |z| \, d \nu_z^{(n)}(x) \, d m(z).
  \end{equation}
    Condition (SR1) implies that the sequence of measures $\mu_e^{(n_k)}$ is tight, so we can extract a further subsequence $\{\mu_e^{(n_{k_l}) } \}_{l=1}^{\infty}$ which converges weakly to a probability measure $\mu$.

    We claim that $\mu=\tilde{\mu}$. Indeed, let $f: \C \to [0,1]$ be a smooth function supported in $B_R \setminus B_r$ for some $r>0$. Then the weak convergence implies
    \[
     \int_{\C} f(z) \, d \mu_e^{(n_{k_l})}(z) \to
     \int_{\C} f(z) \, d \mu(z).
    \]
    Since $f$ satisfies \eqref{eq: weak limit}, we obtain
    \[
          \int_{\C} f(z) \, d \mu(z)
          =     \int_{\C} f(z) \, d \tilde{\mu}(z).
    \]
  This means that the measure $\mu$ coincides with $\tilde{\mu}$ on $\C \setminus \{0\}$.
  Since both $\mu$ and $\tilde{\mu}$ are probability measures,  $\mu=\tilde{\mu}$.

Since $\tilde{\mu}$ is absolutely continuous, we can choose $\t>0$ so that
$\tilde{\mu}(B_{\t})< \frac{\e}{8 \pi \| \psi \|_{\infty}}$.
    Let $\eta: \C \to [0,1]$ be a smooth function such that $\text{supp}(\eta) \subset B_{\t}$ and $\eta(z)=1$ for any $z \in B_{\t/2}$. Then
    \begin{equation}  \label{eq: no mass at 0}
    \int_{\C} \eta(z) \, d \tilde{\mu}(z) <
       \frac{\e}{8 \pi \| \psi \|_{\infty}},
     \end{equation}
      and therefore
    \begin{equation}  \label{eq: sub-subsequence}
      \int_{\C} \eta(z) \, d \mu_e^{(n_{k_l})}(z) <
      \frac{\e}{8 \pi \| \psi \|_{\infty}}
    \end{equation}
    for all sufficiently large $l$.
    Let us estimate the quantity in \eqref{eq: no convergence}:
    \begin{align*}
        &\quad \left|  \int_{\C} \D  \psi (z) \int_{\R} \log |x| \, d \nu_z^{(n_{k_l})}(x) \, d m(z)
          -
          \int_{\C} \D \psi(z) \int_{\R} \log |x| \, d \nu_z(x) \, d m(z)\right | \\
        &\le \left|  \int_{\C} \D \big( (1-\eta) \psi \big) (z) \int_{\R} \log |x| \, d \nu_z^{(n_{k_l})}(x) \, d m(z) \right. \\
          &\hskip 2in  -
          \left. \int_{\C} \D \big( (1-\eta) \psi \big)(z) \int_{\R} \log |x| \, d \nu_z(x) \, d m(z)\right | \\
        &+  \left|  \int_{\C} \D \big( \eta \psi \big) (z) \int_{\R} \log |x| \, d \nu_z^{(n_{k_l})}(x) \, d m(z) \right | \\
          &+
           \left| \int_{\C} \D \big( \eta \psi \big)(z) \int_{\R} \log |x| \, d \nu_z(x) \, d m(z)\right |.
    \end{align*}
    Consider the terms in the right hand side separately. By \eqref{eq: density of mu-tilde} and \eqref{eq: no mass at 0}, we have
    \begin{align*}
      \left| \int_{\C} \D \big( \eta \psi \big)(z) \int_{\R} \log |x| \, d \nu_z(x) \, d m(z)\right |
      &= 2 \pi \left| \int_{\C} \big( \eta \psi \big)(z)  \, d \tilde{\mu}(z)\right | \\
      &\le 2 \pi \|\psi\|_{\infty} \cdot  \left| \int_{\C}  \eta (z)  \, d \tilde{\mu}(z)\right |
      < \frac{\e}{4}.
    \end{align*}
    Similarly, \eqref{eq: Krishnapur} and \eqref{eq: sub-subsequence} imply that for large $l$
    \[
      \left| \int_{\C} \D \big( \eta \psi \big)(z) \int_{\R} \log |x| \, d \nu_z^{(n_{k_l})}(x) \, d m(z)\right |
      < \frac{\e}{4}.
    \]

  The function $\tilde{\varphi}= \D \big( (1-\eta) \psi \big)$ is supported in the annulus $B_R \setminus B_{\t/2}$.
  This function satisfies \eqref{eq: weak limit}, so using \eqref{eq: density of mu-tilde} and \eqref{eq: Krishnapur}, we obtain
  \begin{align*}
      & \int_{\C} \tilde{\varphi} (z) \int_{\R} \log |x| \, d \nu_z^{(n)}(x) \, d m(z)
          -
          \int_{\C} \tilde{\varphi}(z) \int_{\R} \log |x| \, d \nu_z(x) \, d m(z) \\
        & = 2 \pi \int_{\C} \big( (1-\eta) \psi \big) (z) \, d  \mu_e^{(n_{k_l})}(z)
           - 2 \pi \int_{\C} \big( (1-\eta) \psi \big) (z) \, d  \mu(z)
          \to 0.
  \end{align*}
  The combination of these inequalities yields
  \begin{align*}
   &\limsup_{l \to \infty}          \left|  \int_{\C} \D  \psi (z) \int_{\R} \log |x| \, d \nu_z^{(n_{k_l})}(x) \, d m(z)
          -
          \int_{\C} \D \psi(z) \int_{\R} \log |x| \, d \nu_z(x) \, d m(z)\right | \\
   &< \frac{\e}{2},
  \end{align*}
  which contradicts  \eqref{eq: no convergence}. \qed

 \begin{remark}
   Convergence of the empirical measures of eigenvalues $\mu_e^{(n)}$ to the limit measure $\mu_e$
   does not imply the convergence of the eigenvalues to an annulus.
   Indeed, there may be outliers which do not affect the limit measure. For example, assume that $\{D_n\}_{n=1}^{\infty}$ is a sequence of diagonal  matrices
 \[
  D_n= \text{diag}(d_1, \ldots, d_{n-1}, 0),
 \]
 where $d_1, \ldots, d_{n-1}$ are independent random variables uniformly distributed in $[1,2]$. Then the Single Ring Theorem asserts that the support of the measure $\mu_e$ is the annulus $\sqrt{2} \le |z| \le \sqrt{7/3}$, see \eqref{eq: radii of the ring}. At the same time, all matrices $A_n=U_n D_n V_n$ have eigenvalue $0$.

Guionnet and Zeitouni \cite{GZ}  established sufficient conditions for the convergence of the spectrum of $A_n$ to an annulus. Assume that the matrices $A_n$ satisfy (SR1), (SR2), and (SR3), and in addition:

 \begin{enumerate}[(SR4)]
   \item[(SR4)]  Assume that
 \begin{align*}
   &\left ( \int_0^{\infty} x^{-2} \, d \mu_s^{(n)}(x) \right)^{-1/2} \to
    a=\left ( \int_0^{\infty} x^{-2} \, d \mu_s(x) \right)^{-1/2}, \\
  &\left ( \int_0^{\infty} x^{2} \, d \mu_s^{(n)}(x) \right)^{1/2} \to
  b=\left ( \int_0^{\infty} x^{2} \, d \mu_s(x) \right)^{1/2},
 \end{align*}
 and $\inf_n \smin(D_n)>0$ whenever $a > 0$.
 \end{enumerate}
 Then \cite[Theorem 2]{GZ} claims that the spectrum of $A_n$ converges to the annulus $a \le |z| \le b$ in probability.

 Arguing as before, one can eliminate the condition (SR3) from this list. The other conditions are formulated in terms of the matrices $D_n$ only.

 \end{remark}

\appendix

\section{Orthogonal perturbations in low dimensions}  \label{a: main low dim}
%-----------------------

In this section we prove Theorem~\ref{thm: main low dim}, which is a
slightly stronger version of the main Theorem~\ref{thm: main orth full}
in dimensions $n=2$ and $n=3$. The argument will be based on Remez-type inequalities.

\subsection{Remez-type ineqalities}
%~~~~~~~~~~~~~

Remez inequality and its variants capture the following phenomenon:
if a polynomial of a fixed degree is small on a set of given measure, then it remains to be small on a larger set (usually an interval). We refer to \cite{Ganzburg, Ganzburg II} to an extensive discussion of these inequalities.

We will use two versions of Remez-type inequalities,
for multivariate polynomials on a convex body and on the sphere.
The first result is due to Ganzburg and Brudnyi \cite{BG 73, BG 76}, see \cite[Section 4.1]{Ganzburg}.

\begin{theorem}[Remez-type inequality on a convex body]				 \label{thm: remez convex}
  Let $V \subset \R^m$ be a convex body, let $E \subseteq V$ be a measurable set,
  and let $f$ be a real polynomial on $\R^m$ of degree $n$. Then
  $$
  \sup_{x \in V} |f(x)| \le \biggl( \frac{4 m |V|}{|E|} \biggr)^n \sup_{x \in E} |f(x)|.
  $$
  Here $|E|$ and $|V|$ denote the $m$-dimensional Lebesgue measures of these sets. \qed
\end{theorem}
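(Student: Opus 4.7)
The plan is to reduce the multidimensional statement to the classical one-dimensional Remez inequality by slicing $V$ through a point where $|f|$ attains its maximum.

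First, I would fix $x_0 \in V$ with $|f(x_0)| = \sup_V |f| =: M$, and translate so that $x_0 = 0$. By convexity of $V$, for every direction $\theta \in S^{m-1}$ the intersection $I_\theta := V \cap \R\theta$ is a single line segment containing the origin, and $E_\theta := E \cap \R\theta$ is a measurable subset of $I_\theta$. The restriction $f|_{I_\theta}$ is a univariate polynomial of degree at most $n$, so the classical one-dimensional Remez inequality on the segment $I_\theta$ gives, whenever the one-dimensional measure $|E_\theta|_1$ is positive,
\[
M \le \sup_{I_\theta} |f| \le \left(\frac{4|I_\theta|}{|E_\theta|_1}\right)^n \sup_{E_\theta} |f| \le \left(\frac{4|I_\theta|}{|E_\theta|_1}\right)^n \sup_E |f|.
\]

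Next, I would seek a direction $\theta^*$ on which the ratio $|I_{\theta^*}|/|E_{\theta^*}|_1$ is controlled by $m|V|/|E|$ up to an absolute constant. Using polar coordinates centered at the origin, $dx = r^{m-1}\,dr\,d\sigma(\theta)$, both $|V|$ and $|E|$ can be written as integrals over $\theta \in S^{m-1}$ of $r^{m-1}$-weighted chord quantities. Since $V$ contains the origin and is convex, bounding $r^{m-1}$ on the chord $I_\theta$ from above by $|I_\theta|^{m-1}$ yields $|E| \le \int_{S^{m-1}} |I_\theta|^{m-1}\,|E_\theta|_1\,d\sigma(\theta)$, while convexity of $V$ likewise produces a matching expression for $|V|$ of the form $\int_{S^{m-1}} |I_\theta|^m\,d\sigma(\theta)/m$ up to an absolute constant. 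An averaging argument comparing these two integrals then selects $\theta^*$ so that $|I_{\theta^*}|/|E_{\theta^*}|_1 \lesssim m|V|/|E|$. Substituting this into the one-dimensional Remez bound above yields the claimed inequality.

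The main obstacle is getting the sharp linear-in-$m$ dependence of the geometric constant rather than the exponential-in-$m$ constant produced by a naive dimensional induction. A foliation argument, in which one applies the $(m-1)$-dimensional version of the inequality on each hyperplane slice and then the one-dimensional Remez inequality in the transverse variable, only yields a recursion of the shape $C_m \le 4 C_{m-1}$, hence the much worse bound $C_m = 4^m$. Obtaining the factor $4m$ forces one to use the radial (rather than foliated) reduction above, combined with convexity of $V$ in an essential quantitative way — namely, that the radial function $r_+(\theta) := \sup\{r > 0 : r\theta \in V\}$ varies concavely enough along $S^{m-1}$ to allow the measure comparison between the $|I_\theta|^{m-1}\,|E_\theta|_1$ integral and the $|I_\theta|^m$ integral to lose only a factor $m$. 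Once this comparison is executed with the correct constants, the conclusion drops out immediately from the one-dimensional Remez estimate on the chord $I_{\theta^*}$.
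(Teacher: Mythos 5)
The paper does not prove this theorem; it is cited to Brudnyi and Ganzburg \cite{BG 73, BG 76} and the statement is closed without argument. Your radial-slicing plan is correct and essentially reproduces the classical proof, but two points need tightening. First, replace chords by rays. With the maximum point at the origin and $r_+(\theta) := \sup\{r\ge 0: r\theta\in V\}$, polar coordinates give exactly $m|V| = \int_{S^{m-1}} r_+(\theta)^m\,d\sigma(\theta)$, and since $r^{m-1} \le r_+(\theta)^{m-1}$ on the support of the inner integral one also has $|E| \le \int_{S^{m-1}} r_+(\theta)^{m-1}\,\ell(\theta)\,d\sigma(\theta)$, where $\ell(\theta)$ is the one-dimensional Lebesgue measure of $E \cap \{r\theta: r\ge 0\}$. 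Dividing exhibits $|E|/(m|V|)$ as bounded above by a weighted average, with weights $r_+(\theta)^m$, of the ratios $\ell(\theta)/r_+(\theta)$; hence for every $\eta>0$ there is a direction $\theta^*$ with $r_+(\theta^*)/\ell(\theta^*) \le m|V|/|E| + \eta$, and the one-dimensional Remez inequality on the segment $\{r\theta^*: 0\le r\le r_+(\theta^*)\}$ gives the constant $(4m|V|/|E|)^n$ after letting $\eta \to 0$. Your two-sided chord variant requires $\int_{S^{m-1}}|I_\theta|^m\,d\sigma \lesssim m|V|$ with an absolute implicit constant, and that fails: for a centrally symmetric $V$ one has $|I_\theta|=2r_+(\theta)$, so the ratio of the two sides is $2^m$ --- exactly the exponential loss you set out to avoid.

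Second, no concavity of the radial function is involved. The factor $m$ is entirely the polar Jacobian identity $\int_0^L r^{m-1}\,dr = L^m/m$, and the selection of $\theta^*$ is a pure weighted-averaging step that requires no regularity of $r_+$ beyond measurability. Convexity of $V$ enters only to guarantee that each ray from the maximum point meets $V$ in a segment, so that the one-dimensional Remez inequality applies there; star-shapedness about that point would already suffice. Your remark that a hyperplane foliation yields a recursion of the form $C_m \le 4 C_{m-1}$, and hence only the constant $4^m$, is a correct reason to prefer the radial reduction.
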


The second result can be found in \cite{Ganzburg}, see (3.3) and Theorem~4.2 there.

\begin{theorem}[Remez-type inequality on the sphere]						 \label{thm: remez sphere}
  Let $m \in \{1,2\}$, let $E \subseteq S^m$ be a measurable set,
  and let $f$ be a real polynomial on $\R^{m+1}$ of degree $n$. Then
  $$
  \sup_{x \in S^{m}} |f(x)| \le \biggl( \frac{C_1}{|E|} \biggr)^{2n} \sup_{x \in E} |f(x)|.
  $$
  Here $|E|$ denote the $m$-dimensional Lebesgue measure of $E$. \qed
\end{theorem}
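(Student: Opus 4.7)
The plan is to derive Theorem~\ref{thm: remez sphere} from the convex-body Remez inequality Theorem~\ref{thm: remez convex} by reducing to trigonometric polynomials along great circles. The extra factor of $2$ in the exponent $2n$ (compared with the $n$ that appears in Theorem~\ref{thm: remez convex}) reflects the fact that a trigonometric polynomial of degree $n$ corresponds, via $z = e^{i\theta}$, to an algebraic polynomial of degree $2n$.

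First I would dispatch the case $m = 1$. Parametrize $S^1$ by $\theta \in (-\pi, \pi]$ and set $g(\theta) := f(\cos\theta, \sin\theta)$. Then $g$ is a real trigonometric polynomial of degree at most $n$, and via $z = e^{i\theta}$ it has the form $g(\theta) = e^{-in\theta} P(e^{i\theta})$ for some algebraic polynomial $P$ of degree $\le 2n$. To get a Remez-type bound for $g$ from Theorem~\ref{thm: remez convex}, I would use a Weierstrass-type substitution $t = \tan(\theta/2)$, under which $g(\theta)$ becomes $Q(t)/(1 + t^2)^n$ with $\deg Q \le 2n$. Since one chart misses the point $\theta = \pi$, I would cover $S^1$ by two overlapping arcs, each of length greater than $\pi$, parametrized by two such rotated substitutions; on each chart, the measure distortion between arclength on $S^1$ and Lebesgue measure on a bounded interval $[-T, T]$ is controlled, so applying Theorem~\ref{thm: remez convex} with $m = 1$ and degree $2n$ to $Q$ on each chart, and combining, yields the desired bound with exponent $2n$.

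For $m = 2$ I would argue by slicing along great circles. Fix an arbitrary $y \in S^2$, and consider the pencil of great circles $\{C_\phi : \phi \in [0, \pi)\}$ through the antipodal pair $\{y, -y\}$; these foliate $S^2 \setminus \{\pm y\}$. In spherical coordinates $(\phi, \theta)$ centered at $y$, the area element on $S^2$ is $\sin\theta \, d\theta \, d\phi \le d\theta \, d\phi$, so by Fubini
$$
|E| \le \int_0^\pi |E \cap C_\phi|_1 \, d\phi,
$$
where $|\cdot|_1$ denotes $1$-dimensional arclength. Hence there exists $\phi_0$ with $|E \cap C_{\phi_0}|_1 \ge |E|/\pi$. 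The restriction of $f$ to $C_{\phi_0}$ is (after an isometric identification of $C_{\phi_0}$ with $S^1$) a trigonometric polynomial of degree at most $n$, so by the $m = 1$ case already proved,
$$
|f(y)| \le \sup_{C_{\phi_0}} |f| \le \Big( \frac{C\pi}{|E|} \Big)^{2n} \sup_{E \cap C_{\phi_0}} |f| \le \Big( \frac{C'}{|E|} \Big)^{2n} \sup_E |f|.
$$
Taking the supremum over $y \in S^2$ finishes the proof.

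The hard part will be the $m = 1$ case, specifically tracking the exponent $2n$ (rather than $4n$ or worse) through the two-chart argument and the $(1 + t^2)^n$ denominators. The naive attempt of using a single Weierstrass substitution fails near the antipode of the expansion point, and one must verify that the tail behavior $|g(\theta)| \to |g(\pm\pi)|$ as $|t| \to \infty$ is captured correctly by a truncation to a bounded interval $V = [-T, T]$ in Theorem~\ref{thm: remez convex}, with $T$ chosen so that the image of $E$ still has measure comparable to $|E|$. Once the $m = 1$ case is handled cleanly, the $m = 2$ case follows without further loss in the exponent, via the single Fubini-type averaging outlined above.
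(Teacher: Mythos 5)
The paper does not prove this theorem: it is quoted from Ganzburg \cite{Ganzburg} (see (3.3) and Theorem~4.2 there), which is why the statement in the source is followed by a tombstone rather than a proof. Your self-contained derivation from Theorem~\ref{thm: remez convex} is therefore necessarily a different (and more elementary) route, and the overall strategy --- rational parametrization of $S^1$ for $m=1$, great-circle Fubini averaging for $m=2$ --- is sound. The $m=2$ reduction is correct as written: the area element $\sin\theta\,d\theta\,d\phi$ is dominated by the product of arclength on the great circle and $d\phi$, the pigeonholed circle $C_{\phi_0}$ passes through the point $y$ being tested, and the restriction of a degree-$n$ polynomial to a two-plane is again degree $\le n$.

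The $m=1$ step has a real gap, which you half-sense when you flag it as the hard part. You propose to apply Theorem~\ref{thm: remez convex} ``on each chart, and combine,'' but if $E$ lies entirely inside one arc (say chart $1$), the Remez inequality on chart $2$ cannot be applied with the image of $E$ as the small set --- that set may be empty there. The fix is a two-pass argument: first apply Theorem~\ref{thm: remez convex} on the chart containing at least half of $E$ (in $\theta$-measure), obtaining $\sup_{\mathrm{chart}\,1}|g|\le (C/|E|)^{2n}\sup_E|g|$; then apply it a second time on chart $2$, taking as the known-small set the overlap $\mathrm{chart}\,1\cap\mathrm{chart}\,2$, which has arclength bounded below by an absolute constant independent of $E$, and on which $|g|$ is already controlled by the first pass. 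That second pass costs only a factor $C^{n}$, absorbed into $(C_1/|E|)^{2n}$. You should also make explicit that the denominator $(1+t^2)^n$ in $g(\theta)=Q(t)/(1+t^2)^n$ is bounded above and below by absolute constants to the $n$-th power on the fixed window $[-T,T]$ (with $T$ chosen once and for all, say $T=\tan(3\pi/8)$), and that the Jacobian $d\theta/dt=2/(1+t^2)$ is likewise bounded above and below there, so the $t$-measure and $\theta$-measure of $E$ are comparable. With those points supplied, each contribution is of the form $C^{2n}$, the Remez exponent $2n$ for the degree-$2n$ polynomial $Q$ is the dominant one, and the argument closes.
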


\begin{remark}
  By a simple argument based on Fubini theorem, a similar Remez-type inequality can be proved
  for the real three-dimensional torus $T_3 := S^1 \times S^2 \subset \R^5$ equipped with
  the product measure:
  \begin{equation}				\label{eq: remez torus}
  \sup_{x \in T_3} |f(x)| \le \biggl( \frac{C_1}{|E|} \biggr)^{4n} \sup_{x \in E} |f(x)|.
  \end{equation}
\end{remark}

\subsection{Vanishing determinant}
%------------

Before we can prove Theorem~\ref{thm: main low dim}, we
we establish a simpler result, which is deterministic and
which concerns determinant instead of the smallest singular value.
The determinant is simpler to handle because it can be easily expressed
in terms of the matrix entries.

\begin{lemma}[Vanishing determinant]				\label{lem: det small}
  Let $B$ be a fixed $n \times n$ complex matrix, where $n \in \{2,3\}$.
  Assume that $\|B\| \ge 1/2$.
  Let $\e > 0$ and assume that
  $$
  |\det(B+U)| \le \e \quad \text{for all } U \in SO(n).
  $$
  Then
  $$
  \|B B^\tran - I\| \le C \e \|B\|.
  $$
\end{lemma}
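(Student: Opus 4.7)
The plan is to parametrize $SO(n)$ and expand $\det(B+U)$ as a trigonometric polynomial of bounded degree whose coefficients are explicit polynomial expressions in the entries of $B$. Uniform boundedness of this expansion by $\varepsilon$ then translates into bounds on each coefficient, from which the estimate on $\|BB^\tran-I\|$ follows by direct algebraic manipulation.

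For $n=2$, I would write $U = R_\theta$ and expand the determinant directly to obtain
\[ \det(B + R_\theta) = (1 + \det B) + (\tr B)\cos\theta + (B_{21}-B_{12})\sin\theta, \]
a first-order trigonometric polynomial. Averaging over $\theta$ and then evaluating at $\theta = 0$ and $\theta = \pi/2$ extracts the three coefficients, giving $|1 + \det B| \le \varepsilon$, $|\tr B| \le 2\varepsilon$, and $|B_{21}-B_{12}| \le 2\varepsilon$. Writing out the four entries of $BB^\tran-I$ and substituting $B_{22} = -B_{11} + O(\varepsilon)$, $B_{12} = B_{21} + O(\varepsilon)$, and $\det B = -1 + O(\varepsilon)$ in succession, each entry reduces to a quantity of size $O(\varepsilon \|B\|)$.

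For $n=3$, I would apply the same strategy to the one-parameter subgroup of rotations about a general axis $v \in S^2$. By Rodrigues' formula, $R_v(\theta) = \cos\theta \cdot I + \sin\theta \cdot K_v + (1-\cos\theta)\, v v^\tran$, where $K_v$ is the skew-symmetric cross-product matrix associated with $v$. Hence $\theta \mapsto \det(B + R_v(\theta))$ is a trigonometric polynomial of bounded degree whose Fourier coefficients are polynomials of bounded degree in $v$. The hypothesis bounds each of these Fourier coefficients uniformly in $v \in S^2$, and applying the Remez-type inequality on $S^2$ (Theorem~\ref{thm: remez sphere}) to each coefficient polynomial in $v$ extracts bounds on its individual monomial coefficients. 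These monomials are explicit polynomial expressions in the entries of $B$, and combining them in analogy with the $n=2$ case yields the desired bound $\|BB^\tran - I\| \le C\varepsilon \|B\|$.

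The main obstacle is the bookkeeping in the $n=3$ case: identifying which combinations of Fourier coefficients in $\theta$ and coefficients in $v$ correspond precisely to the entries of $BB^\tran - I$, and tracking factors of $\|B\|$ throughout the algebra. The $\|B\|$ factor in the conclusion is unavoidable, as illustrated by the complex-orthogonal example in the preceding remark, and the hypothesis $\|B\| \ge 1/2$ is needed to rescale the coefficient estimates consistently.
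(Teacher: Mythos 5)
Your $n=2$ argument is essentially identical to the paper's and is correct. The $n=3$ argument, however, has a genuine gap. You propose to expand $\det(B+R_v(\theta))$ in $\theta$, observe that each Fourier coefficient is a polynomial in $v$ bounded uniformly by $O(\varepsilon)$ on $S^2$, and then ``apply the Remez-type inequality on $S^2$ to extract bounds on individual monomial coefficients.'' This does not work as stated. Remez-type inequalities upgrade a bound on a \emph{subset} of positive measure to a bound on the whole domain; here your polynomial is already bounded on all of $S^2$, so Remez gives nothing new. What you actually need---passing from $|p(v)| \le \varepsilon$ on $S^2$ to bounds on the coefficients of $p$---is a different kind of statement, and it is not even well-posed as written: a polynomial restricted to $S^2$ does not have unique monomial coefficients (e.g.\ $1$ and $v_1^2+v_2^2+v_3^2$ agree on $S^2$), so one must first choose a normal form modulo the ideal $(\|v\|^2-1)$, which introduces exactly the kind of bookkeeping you flag as the main obstacle without resolving it.

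The paper avoids all of this. It does not use Remez in this lemma at all; it simply tests the hypothesis on a handful of explicit $U \in SO(3)$. Taking block rotations of the form
$U(\phi)=\bigl[\begin{smallmatrix} \pm 1 & 0 & 0 \\ 0 & \cos\phi & \pm\sin\phi \\ 0 & \mp\sin\phi & \pm\cos\phi \end{smallmatrix}\bigr]$
(and the analogous matrices fixing the other two coordinates), one reads off from the constant Fourier coefficient in $\phi$ the two relations
$\det(B)+\det(B^{11})+B_{11}+1 \approx_\varepsilon 0$ and $\det(B)-\det(B^{11})-B_{11}+1\approx_\varepsilon 0$,
hence $\det(B)\approx_\varepsilon -1$ and $B_{ij}\approx_\varepsilon (-1)^{i+j+1}\det(B^{ij})$ for all $i,j$. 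Each entry of $BB^\tran-I$ is then bounded directly by a cofactor expansion of $\det B$, exactly as you did in the $n=2$ case. If you insist on the Rodrigues parametrization you would need to make the coefficient extraction rigorous; but the cleaner route is to imitate the $n=2$ proof by testing on axis-aligned rotations.
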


\begin{proof}
To make this proof more readable, we will write $a \lesssim b$ if $a \le C b$ for a suitable absolute constant $C$,
and $a \approx_\e b$ if $|a-b| \lesssim \e$.

{\bf Dimension $n=2$.}
Let us represent
$$
U = U(\phi) =
\begin{bmatrix}
  \cos \phi & \sin \phi \\
  -\sin \phi & \cos \phi
\end{bmatrix}.
$$
Then $\det(B+U)$ is a trigonometric polynomial
$$
\det(B+U) = k_0 + k_1 \cos \phi + k_2 \sin \phi
$$
whose coefficients can be expressed in terms the coefficients of $B$:
$$
k_0 = \det(B) + 1; \quad
k_1 = B_{11} + B_{22}; \quad
k_2 = B_{12} - B_{21}.
$$
By assumption, the modus of this trigonometric polynomial is bounded by $\e$.
Therefore all of its coefficients are also bounded, i.e.
$$
|k_i| \lesssim \e, \quad i = 1,2,3.
$$

It is enough to check that all entries of $B B^\tran$ are close to the corresponding
entries of $I$. We will check this for entries $(1,1)$ and $(1,2)$; others are similar.
Then
$$
(B B^\tran)_{11} = B_{11}^2 + B_{12}^2
\approx_{\e'} -B_{11} B_{22} + B_{12} B_{21}
$$
where we used that $|k_1| \lesssim \e$, $|k_2| \lesssim \e$, and thus
the resulting error can be estimated as
$$
\e' \lesssim \e ( |B_{11}| + |B_{12}| ) \lesssim \e \|B\|.
$$
But
$$
-B_{11} B_{22} + B_{12} B_{21} = -\det(B) \approx_\e 1,
$$
where we used that $|k_0| \lesssim \e$.
We have shown that
$$
|(B B^\tran)_{11} - 1| \lesssim \e \|B\| + \e \lesssim \e \|B\|,
$$
as required.

Similarly we can estimate
$$
(B  B^\tran)_{12} = B_{11} B_{21} + B_{12} B_{22}
  \approx_{\e'} B_{11} B_{12} - B_{12} B_{11}
  = 0.
$$

Repeating this procedure for all entries, we have shown that
$$
|(B B^\tran)_{ij} - I_{ij}| \lesssim \e \|B\|
$$
for all $i,j$.
This immediately implies the conclusion of the lemma in dimension $n=2$.

\medskip

{\bf Dimension $n=3$.}
We claim that
\begin{equation}				\label{eq: entries dets}
\det(B) \approx_\e -1; \quad
B_{ij} \approx_\e (-1)^{i+j+1} \det(B^{ij}), \quad i, j \in \{1,2,3\},
\end{equation}
where $B^{ij}$ denotes the minor obtained by removing the $i$-th row and $j$-th column from $B$.

Let us prove \eqref{eq: entries dets} for $i=j=1$; for other entries the argument is similar.
Let
$$
U = U(\phi) =
\begin{bmatrix}
  1 & 0 & 0 \\
  0 & \cos \phi & \sin \phi \\
  0 & -\sin \phi & \cos \phi
\end{bmatrix}.
$$
Then as before, $\det(B+U)$ is a trigonometric polynomial
$$
\det(B+U) = k_0 + k_1 \cos \phi + k_2 \sin \phi
$$
whose coefficients can be expressed in terms the coefficients of $B$.
Our argument will only be based on the free coefficient $k_0$, which one can
quickly show to equal
$$
k_0 = \det
\begin{bmatrix}
  B_{11} + 1 & B_{12} & B_{13} \\
  B_{21} & B_{22} & B_{23} \\
  B_{31} & B_{32} & B_{33}
\end{bmatrix}
+ B_{11} + 1
= \det(B) + \det(B^{11}) + B_{11} + 1.
$$
As before, the assumption yields that $|k_0| \lesssim \e$, so
\begin{equation}				\label{eq: B11 minor 1}
\det(B) + \det(B^{11}) + B_{11} + 1 \approx_\e 0.
\end{equation}

Repeating the same argument for
$$
U = U(\phi) =
\begin{bmatrix}
  -1 & 0 & 0 \\
  0 & \cos \phi & \sin \phi \\
  0 & \sin \phi & -\cos \phi
\end{bmatrix}
$$
yields
\begin{equation}				\label{eq: B11 minor 2}
\det(B) - \det(B^{11}) - B_{11} + 1 \approx_\e 0.
\end{equation}
Estimates \eqref{eq: B11 minor 1} and \eqref{eq: B11 minor 2}  together imply that
$$
\det(B) \approx_\e -1; \quad
B_{11} \approx_\e -\det(B^{11}).
$$
This implies claim \eqref{eq: entries dets} for $i=j=1$; for other entries the argument
is similar.

Now we can estimate the entries of $B^\tran B$. Indeed, by \eqref{eq: entries dets} we have
\begin{align}
(B B^\tran)_{11} &= B_{11}^2 + B_{12}^2 + B_{13}^2 \nonumber\\
  &\approx_{\e'} -B_{11} \det(B^{11}) + B_{12} \det(B^{12}) - B_{13} \det(B^{13}),		\label{eq: det expansion}
\end{align}
where the error $\e'$ can be estimated as
$$
\e' \lesssim \e ( |B_{11}| + |B_{12}| + |B_{13}|) \lesssim \e \|B\|.
$$
Further, the expression in \eqref{eq: det expansion} equals $-\det(B)$, which can be
seen by expanding the determinant along the first row. Finally, $-\det(B) \approx_\e 1$
by \eqref{eq: entries dets}. We have shown that
$$
|(B^\tran B)_{11} - 1| \lesssim \e \|B\| + \e \lesssim \e \|B\|,
$$
as required.

Similarly we can estimate
\begin{align*}
(B B^\tran)_{12} &= B_{11} B_{21} + B_{12} B_{22} + B_{13} B_{23} \\
  &\approx_{\e'} B_{11} \det(B^{21}) - B_{12} \det(B^{22}) + B_{13} \det(B^{23}) \\
  &=B_{11} (B_{12} B_{33} - B_{32} B_{13})
    - B_{12} ( B_{11} B_{33} - B_{31} B_{13})
    + B_{13} ( B_{11} B_{32} - B_{31} B_{12} ) \\
  &=0
\end{align*}
(all terms cancel).

Repeating this procedure for all entries, we have shown that
$$
|(B B^\tran)_{ij} - I_{ij}| \lesssim \e \|B\|
$$
for all $i,j$.
This immediately implies the conclusion of the lemma in dimension $n=3$.
\end{proof}

\subsection{Proof of Theorem~\ref{thm: main low dim}}
%------------

%\begin{theorem}					\label{thm: BTB close to I}
%  Let $B \in \C^{n \times n}$, $n \in \{2,3\}$.
%  Let $\e \in (0,1/2)$ and assume that
%  $$
%  |\smin(B+U)| \le \e \quad \text{for all } U \in SO(n).
% $$
%  Then
%  $$
%  \|B^\tran B - I\| \lesssim \e \|B\|^2.
%  $$
%\end{theorem}
Let us fix $t$; without loss of generality, we can assume that $t < \d/100$.
Let us assume that $B+U$ is poorly invertible with significant probability:
\begin{equation}				\label{eq: poor invertibility}
\Pr{\smin(B+U) \le t} > p(\d, t).
\end{equation}
where $p(\d,t) \in (0,1)$ is to be chosen later.
Without loss of generality we may assume that $U$ is distributed
uniformly in $SO(n)$ rather than $O(n)$. Indeed, since $O(n)$ can be decomposed into two
conjugacy classes $SO(n)$ and $O(n) \setminus SO(n)$,
the inequality \eqref{eq: poor invertibility}
must hold over at least one of these classes. Multiplying one of the rows of $B+U$ by $-1$
if necessary, one can assume that it holds for $SO(n)$.

Note that $\|B\| \ge 1/2$; otherwise
$\smin(B+U) \ge 1 - \|B\| \ge 1/2 > t $ for all $U \in O(n)$, which violates \eqref{eq: poor invertibility}.

\subsubsection{Dimension $n=2$.}
%.....................
In this case the result follows easily from Lemma~\ref{lem: det small}
and Remez inequality.
Indeed, the event $\smin(B+U) \le t$ implies
$$
|\det(B+U)| = \smin(B+U) \|B+U\| \le t (\|B\|+1) \le 3 t \|B\|.
$$
Therefore, by \eqref{eq: poor invertibility} we have
\begin{equation}								\label{eq: det D+U small}
\Pr{\det(B+U) \le 3 t \|B\|} > p(\d, t).
\end{equation}
A random uniform rotation
$U = \left[ \begin{smallmatrix} x & y \\ -y & x \end{smallmatrix} \right] \in SO(2)$
is determined by a random uniform point $(x,y)$ on the real sphere $S^1$.
Now, $\det(D+U)$ is a complex-valued quadratic polynomial in variables $x,y$ that is restricted to the real sphere $S^1$.
Hence $|\det(D+U)|^2$ is a real-valued polynomial of degree $4$ restricted to the real sphere $S^1$.
Therefore, we can apply the Remez-type inequality, Theorem~\ref{thm: remez sphere},
for
the subset $E := \{ U: |\det(D+U)|^2/\|B\|^2 \le 3t \}$ of $S^1$ which satisfies $|E| \ge 2 \pi \, p(\d, t)$
according to \eqref{eq: det D+U small}.
It follows that
$$
|\det(B+U)| \le \Big( \frac{C_1}{p(\d, t)} \Big)^{C_0} t \|B\| \quad \text{for all } U \in SO(2).
$$
An application of Lemma~\ref{lem: det small} then gives
$$
\|B B^\tran  - I\| \le C_2 \Big( \frac{C_1}{p(\d, t)} \Big)^{C_0} t \|B\|^2.
$$
On the other hand, assumption \eqref{eq: no complex orthogonality}
states that the left hand side is bounded below by $\d \|B\|^2$.
It follows that
\begin{equation}				\label{eq: p delta t}
\d \le C_2 \Big( \frac{C_1}{p(\d, t)} \Big)^{C_0} t.
\end{equation}

Now we can choose $p(\d,t) = C(t/\d)^c$ with sufficiently large absolute constant $C$
and sufficiently small absolute constant $c>0$ so that inequality
\eqref{eq: p delta t} is violated.
Therefore \eqref{eq: poor invertibility} fails with this choice of $p(\d,t)$,
and consequently we have
$$
\Pr{\smin(B+U) \le t} \le C(t/\d)^c,
$$
as claimed.

\subsubsection{Dimension $n=3$: middle singular value} 		\label{s: middle}
%....................
This time, determinant is the product of three singular values.
So repeating the previous argument would produce an extra factor of $\|B\|$,
which would force us to require that
$$
\|B B^\tran - I\| \ge \d \|B\|^3.
$$
instead of \eqref{eq: no complex orthogonality}.

The weak point of this argument is that it ignores the {\em middle} singular value of $B$,
replacing it by the largest one. We will now be more careful.
Let $s_1 \ge s_2 \ge s_3 \ge 0$ denote the singular values of $B$.

Assume the event $\smin(B+U) \le t$ holds.
Since $\|U\|=1$, the triangle inequality, Weyl's inequality and the assumption imply
that the three singular values of $B+U$ are bounded one by
$s_1+1 \le \|B\| + 1 \le 3\|B\|$, another by $s_2+1$ and the remaining one by $t$.
Thus
$$
|\det(B+U)| \le 3 t (s_2+1) \|B\|.
$$

Let $K \ge 2$ be a parameter to be chosen later.
Suppose first that $s_2 \le K$ holds. Then $|\det(B+U)| \le 6 t K \|B\|$,
and we shall apply Remez inequality. In order to do this, we can
realize $U \in SO(3)$ as a random uniform
rotation of the $(x,y)$ plane followed by an independent rotation that maps the $z$ axis
to a uniform random direction.
Thus $U$ is determined by a random point $(x,y,z_1,z_2,z_3)$ in the real three-dimensional torus
$T_3 = S^1 \times S^2$, chosen according to the uniform (product) distribution.
Here $(x,y) \in S^1$ and $(z_1,z_2,z_3) \in S^2$ determine the two rotations we described
above.\footnote{This construction and its higher-dimensional generalization
  follow the 1897 description of the Haar measure on $SO(n)$ by Hurwitz, see \cite{DS}.}

We regard $|\det(B+U)|^2$ as a real polynomial in five variables $x,y,z_1,z_2,z_3$ and
constant degree, which is restricted to $T_3$.
Thus we can apply the Remez-type inequality for the torus, \eqref{eq: remez torus},
and an argument similar to the case $n=2$ yields
\begin{equation}				\label{eq: if s2 small}
\Pr{\smin(B+U) \le t} \le C(tK/\d)^c.
\end{equation}

Now we assume that $s_2 \ge K$.
We will show that, for an appropriately chosen $K$, this case is impossible,
i.e. $B+U$ can not be poorly invertible with considerable probability.

\subsubsection{Reducing to one dimension}
%.....................
Since $s_1 \ge s_2 \ge K \ge 2$, it must be that $s_3 \le 2$;
otherwise all singular values of $B$ are bounded
below by $2$, which clearly implies that $\smin(B+U) \ge 1$ for all $U \in O(3)$.
This will allow us to reduce our problem to one dimension.
To this end, we consider the singular value decomposition of $B$,
$$
B = s_1 q_1 p_1^* +  s_2 q_2 p_2^* + s_3 q_3 p_3^*,
$$
where $\{p_1, p_2, p_3\}$ and $\{q_1, q_2, q_3\}$ are orthonormal bases in $\C^3$.

Assume the event $\smin(B+U) \le t$ holds. Then there exists
$x \in \C^3$, $\|x\|_2 = 1$, such that
$$
\|(B+U)x\|_2 \le t.
$$
We are going to show that $x$ is close to $p_3$, up to a unit scalar factor.
To see this, note that $\|Bx\|_2 \le 1+ t \le 2$, so
\begin{align}
4 &\ge \|Bx\|_2^2
  = s_1^2 |p_1^*x|^2 + s_2^2 |p_2^*x|^2 + s_3^2 |p_3^*x|^2
  \ge K^2 ( |p_1^*x|^2 + |p_2^*x|^2 ) 			\label{eq: proj p1 p2}\\
  &= K^2 (1 - |p_3^*x|^2). \nonumber
\end{align}
It follows that
\begin{equation}				\label{eq: proj p3}
1 - \frac{4}{K^2} \le |p_3^*x| \le 1
\end{equation}
(the right hand side holds since $\|p_3^*\|_2 = \|x\|_2 = 1$.)
Let $\eta := p_3^*x / |p_3^*x|$; then
\begin{align*}
\|x - \eta p_3\|_2^2
  &= \|x/\eta - p_3\|_2^2
  = |p_1^*(x/\eta-p_3)|^2 + |p_2^*(x/\eta-p_3)|^2 + |p_3^*(x/\eta-p_3)|^2 \\
  &= |p_1^*x|^2 + |p_2^*x|^2 + \big| |p_3^*x| - 1 \big|^2 		
      \quad \text{(by orthogonality and definition of $\eta$)}\\
  &\le \frac{4}{K^2} + \frac{16}{K^4} \quad \text{(by \eqref{eq: proj p1 p2} and \eqref{eq: proj p3})}\\
  &\le \frac{8}{K^2}.
\end{align*}

Now, by triangle inequality,
\begin{equation}				\label{eq: q3Bp3}
|q_3^*(B+U) p_3|
  = |q_3^*(B+U) \eta p_3|
  \le |q_3^*(B+U) x| + |q_3^*(B+U) (x - \eta p_3)|.
\end{equation}
The first term is bounded by $\|q_3^*\|_2 \|(B+U)x\|_2 \le t$.
The second term is bounded by
$$
\|q_3^*(B+U)\|_2 \|x - \eta p_3\|_2
\le (\|q_3^*B\|_2 + 1) \frac{\sqrt{8}}{K}
= (s_3 + 1) \frac{\sqrt{8}}{K}
\le \frac{3\sqrt{8}}{K} \le \frac{9}{K}.
$$
Therefore the expression in \eqref{eq: q3Bp3} is bounded by $t + 9/K$.

Summarizing, we have found vectors $u, v \in \C^3$, $\|u\|_2 = \|v\|_2 = 1$, such that
the event $\smin(B+U) \le t$ implies
\[
|u^\tran (B+U) v| \le t + 9/K.
\]
Note that the vectors $u = (q_3^*)^\tran$, $v=p_3$ are fixed; they depend on $B$ only.
By \eqref{eq: poor invertibility}, we have shown that
$$
\Pr{ |u^\tran (B+U) v| \le t + 9/K } \ge p(\d,t).
$$
We can apply Remez inequality for $|u^\tran (B+U) v|^2$, which is a quadratic
polynomial in the entries of $U$. It yields
\begin{equation}				\label{eq: uBv prelim}
|u^\tran (B+U) v| \le \Big( \frac{C_1}{p(\d,t)} \Big)^{C_0} (t + 9/K)
\quad \text{for all } U \in SO(3).
\end{equation}

Let $c_0 \in (0,1)$ be a small absolute constant.
Now we can choose
\begin{equation}				\label{eq: p K}
p(\d,t) = C(t/\d)^c, \quad K = 4 (\d/t)^{1/2}
\end{equation}
with sufficiently large absolute constant $C$
and sufficiently small absolute constant $c>0$ so that
the right hand side in \eqref{eq: uBv prelim} is bounded by $c_0$.
Summarizing, we have shown that
\begin{equation}				\label{eq: uBv}
|u^\tran (B+U) v| \le c_0
\quad \text{for all } U \in SO(3).
\end{equation}
We are going to show that this is impossible.
In the remainder of the proof, we shall write $a \ll 1$ to mean that
$a$ can be made arbitrarily small by a suitable choice of $c_0$,
i.e. that $a \le f(c_0)$ for some fixed real valued positive function (which does not depend
on anything) and such that $f(x) \to 0$ as $x \to 0_+$.

\medskip

\subsubsection{Testing on various $U$}
%......................
Let us test \eqref{eq: uBv} on
$U = U(\phi) =
\left[ \begin{smallmatrix}
  \cos \phi & \sin \phi & 0 \\
  -\sin \phi & \cos \phi & 0 \\
  0 & 0 & 1
\end{smallmatrix} \right]$.
Writing the bilinear form as a function of $\phi$, we obtain
$$
u^\tran (B+U) v = k + (u_1 v_1 + u_2 v_2) \cos \phi + (u_1 v_2 - u_2 v_1) \sin \phi
$$
where $k = k(B, u, v)$ does not depend on $\phi$.
Since this trigonometric polynomial is small for all $\phi$,
its coefficients must are also be small, thus
$$
|u_1 v_1 + u_2 v_2| \ll 1, \quad |u_1 v_2 - u_2 v_1| \ll 1.
$$
We can write this in terms of a matrix-vector product as
$$
\left\| \begin{bmatrix}
  u_1 & u_2 \\
  -u_2 & u_1
\end{bmatrix}
\begin{bmatrix}
  v_1 \\
  v_2
\end{bmatrix} \right\|_2 \ll 1.
$$
Since $c_0$ is small, it follows that either the matrix
$\left[\begin{smallmatrix}
  u_1 & u_2 \\
  -u_2 & u_1
\end{smallmatrix}\right]$
is poorly invertible (its smallest singular value is small), or the vector
$\left[\begin{smallmatrix}
  v_1 \\
  v_2
\end{smallmatrix}\right]$
has small norm. Since $\|u\|_2 = 1$, the norm of the matrix is bounded by $\sqrt{2}$.
Hence poor invertibility of the matrix is equivalent to smallness of its determinant,
which is $u_1^2 + u_2^2$. Formally, we conclude that
\begin{equation}				\label{eq: v or u}
\text{either } |v_1|^2 + |v_2|^2 \ll 1
\quad \text{or } |u_1^2 + u_2^2| \ll 1.
\end{equation}

Assume that $|v_1|^2 + |v_2|^2 \ll 1$; since $\|v\|_2 = 1$ this implies
$|v_3| \ge 1/2$.
Now test \eqref{eq: uBv} on
$U = U(\phi) =
\left[ \begin{smallmatrix}
  \cos \phi & 0 & \sin \phi \\
  0 & 1 & 0 \\
  -\sin \phi & 0 & \cos \phi
\end{smallmatrix} \right]$
a similar argument yields
$$
|u_1 v_1 + u_3 v_3| \ll 1, \quad |u_1 v_3 - u_3 v_1| \ll 1.
$$
Since $|u_1| \le 1$, $|u_3| \le 1$, $|v_1| \ll 1$ and $|v_3| \ge 1/2$,
this system implies
$$
|u_1| \ll 1, \quad |u_3| \ll 1.
$$
Similarly, testing on
$U = U(\phi) =
\left[ \begin{smallmatrix}
  1 & 0 & 0 \\
  0 & \cos \phi & \sin \phi \\
  0 & -\sin \phi & \cos \phi
\end{smallmatrix} \right]$,
the same argument yields
$$
|u_2| \ll 1, \quad |u_3| \ll 1.
$$
So we proved that $|u_1| \ll 1$, $|u_2| \ll 1$, $|u_3| \ll 1$.
But this is impossible since $\|u\|_2 = 1$.

We have thus shown that in \eqref{eq: v or u} the first option never holds,
so the second must hold. In other words, we have deduced from \eqref{eq: uBv}
that
\begin{equation}				\label{eq: u1 u2}
|u_1^2 + u_2^2| \ll 1.
\end{equation}
Using a similar argument (for rotations $U$ in coordinates $1, 3$ and $2, 3$)
we can also deduce that
\begin{equation}				\label{eq: u1 u2 u3}
|u_1^2 + u_3^2| \ll 1, \quad |u_2^2 + u_3^2| \ll 1.
\end{equation}	
Inequalities \eqref{eq: u1 u2} and \eqref{eq: u1 u2 u3} imply that
$$
|u_1^2| \ll1, \quad |u_2^2| \ll1, \quad |u_3^2| \ll1.
$$
But this contradicts the identity $\|u\|_2 = 1$.

This shows that \eqref{eq: uBv} is impossible, for a suitable choice of
absolute constant $c_0$.

\medskip

\subsubsection{Conclusion of the proof}
%...................
Let us recall the logic of the argument above.
We assumed in \eqref{eq: poor invertibility} that $B+U$ is poorly invertible
with significant probability, \linebreak
$\Pr{\smin(B+U) \le t} > p(\d, t)$.
With the choice $p(\d,t) = C(t/\d)^c$, $K = 4 (\d/t)^{1/2}$ made in \eqref{eq: p K},
we showed that either \eqref{eq: if s2 small} holds (in the case $s_2 \le K$), i.e.
$\Pr{\smin(B+U) \le t} \le C(tK/\d)^c$, or a contradiction appears
(in the case $s_2 \ge K$).
Therefore, one always has
$$
\Pr{\smin(B+U) \le t} \le \max(p(\d,t), \ C(tK/\d)^c).
$$
Due to our choice of $p(\d,t)$ and $K$, the right hand side is
bounded by $C(tK/\d)^{c/2}$.
This completes the proof of Theorem~\ref{thm: main low dim}.
\qed

\section{Some tools used in the proof of Theorem~\ref{thm: main orth full}}	\label{a: tools}
%---------------------

In this appendix we shall prove auxiliary results used in the proof of Theorem~\ref{thm: main orth full}.
These include: Lemma~\ref{lem: concentration HS} on small ball probabilities for Gaussian random
vectors (which we used in the proof of Lemma~\ref{lem: poorly invertible minor}),
Lemma~\ref{lem: invertibility Gaussian perturbation} on invertibility of Gaussian perturbations,
and Lemma~\ref{lem: breaking orthogonality} on breaking complex orthogonality by a random change
of basis.
Some of the proofs of these results follow standard arguments, but the statements are
difficult to locate in the literature.

\subsection{Small ball probabilities}
%........................

\begin{lemma}			 \label{lem: conc rv}
  Let $X \sim N_\R(\mu, \s^2)$ for some $\mu \in \R$, $\s>0$. Then
  $$
  \Pr{ |X| \le t \s } \le t, \quad t>0.
  $$
\end{lemma}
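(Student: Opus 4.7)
The plan is to bound the probability by integrating the Gaussian density over the interval $[-t\sigma, t\sigma]$ and using the fact that the density has an explicit uniform upper bound.

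First I would write $\Pr{|X| \le t\sigma} = \int_{-t\sigma}^{t\sigma} \frac{1}{\sqrt{2\pi}\,\sigma}\, \exp\!\bigl( -(x-\mu)^2/(2\sigma^2) \bigr)\, dx$. Since the exponential factor is at most $1$, the integrand is pointwise bounded by $\frac{1}{\sqrt{2\pi}\,\sigma}$. Multiplying this uniform bound by the length $2t\sigma$ of the interval of integration yields $\Pr{|X| \le t\sigma} \le \frac{2t}{\sqrt{2\pi}}$. Since $2/\sqrt{2\pi} < 1$, the right-hand side is at most $t$, which is the claimed bound.

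There is no real obstacle here: the argument is just the standard anti-concentration estimate for a one-dimensional Gaussian, namely that the density is bounded by $(2\pi)^{-1/2}\sigma^{-1}$ regardless of the mean $\mu$. The bound on the density is the only place where $\sigma$ enters, and the independence of the estimate from $\mu$ is what makes the result uniform over all means. No tail estimate is needed since we are bounding a small-ball, not a large-deviation, probability.
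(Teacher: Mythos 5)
Your proof is correct and is exactly the argument the paper intends: the paper's one-line proof simply cites the density bound $1/(\sigma\sqrt{2\pi})$, and you have spelled out the ensuing computation (interval length $2t\sigma$ times the density bound gives $2t/\sqrt{2\pi} < t$). Same approach, just written in full.
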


\begin{proof}
The result follows since the density of $X$ is bounded by $1/\s\sqrt{2\pi}$.
\end{proof}

\begin{lemma}						\label{lem: concentration HS}
Let $Z \sim N_\R(\mu, \s^2 I_n)$ for some $\mu \in \C^n$
and $\s > 0$.\footnote{This means that $X-\mu$ is real valued variable distributed
  according to $N(0, \s^2 I_{n-1})$.}
 Then
$$
\Pr{ \|MZ\|_2 \le t \s \|M\|_\HS } \le C t \sqrt{n}, \quad t>0.
$$
\end{lemma}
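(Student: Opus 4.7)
The plan is to reduce to a real-Gaussian small-ball problem and then bound the quantity by testing against the direction of the largest column of the associated real matrix. First I would write $Z = \mu + \s G$ with $G \sim N_\R(0, I_n)$, so that $MZ = M\mu + \s MG$. Decomposing $M = A + iB$ into real and imaginary parts, I form the real $2m \times n$ matrix $\til{M} = \bigl[\begin{smallmatrix} A \\ B \end{smallmatrix}\bigr]$, and similarly split $M\mu = v_1 + i v_2$ into a single real vector $w = (v_1, v_2) \in \R^{2m}$. A direct computation shows that $\|MZ\|_2 = \|w + \s \til{M} G\|_2$ and $\|M\|_\HS = \|\til{M}\|_\HS$, so the problem becomes purely real.

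Next, let $r_1, \ldots, r_n \in \R^{2m}$ denote the columns of $\til{M}$, and pick $j^* \in [n]$ maximizing $\|r_j\|_2$. By a standard averaging,
\[
\|r_{j^*}\|_2 \ge \frac{\|\til{M}\|_\HS}{\sqrt{n}} = \frac{\|M\|_\HS}{\sqrt{n}}.
\]
Set the unit vector $u := r_{j^*}/\|r_{j^*}\|_2$. Then
\[
\|w + \s \til{M} G\|_2 \ge |\langle u, w + \s \til{M} G\rangle| = |c + \s \|r_{j^*}\|_2 G_{j^*}|,
\]
where $c := \langle u, w\rangle + \s \sum_{j \ne j^*} G_j \langle u, r_j\rangle$ depends only on $\{G_j : j \ne j^*\}$.

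Now I condition on all $G_j$ with $j \ne j^*$, so that $c$ becomes a fixed real number and the expression $c + \s \|r_{j^*}\|_2 G_{j^*}$ is a real Gaussian random variable with mean $c$ and standard deviation $\s \|r_{j^*}\|_2$. Applying Lemma~\ref{lem: conc rv} yields
\[
\P_{G_{j^*}}\bigl\{|c + \s \|r_{j^*}\|_2 G_{j^*}| \le t \s \|M\|_\HS\bigr\} \le \frac{t \s \|M\|_\HS}{\s \|r_{j^*}\|_2} \le t \sqrt{n}.
\]
Taking expectations over the remaining $G_j$'s then produces the unconditional bound $\Pr{\|MZ\|_2 \le t\s \|M\|_\HS} \le t\sqrt{n}$, which is in fact stronger than the claim (with $C=1$).

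There is no real obstacle here: the only subtlety is the complex-valued mean $\mu$ and the complex-valued matrix $M$, which is handled uniformly by the real-embedding step in the first paragraph. Everything else is the classical trick of selecting the coordinate direction along which the Gaussian noise $\s G$ contributes the most, then invoking anti-concentration of a single real Gaussian.
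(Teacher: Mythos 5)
Your proof is correct (and gives $C=1$). The underlying idea is the same as the paper's: find one direction along which the signal is at least $\|M\|_\HS/\sqrt{n}$, then apply anti-concentration of a single real Gaussian (Lemma~\ref{lem: conc rv}). The paper does this by choosing the largest \emph{row} $M_i$ of $M$ and observing that $M_i^\tran Z$ is itself a coordinate of $MZ$ and is a one-dimensional Gaussian of variance $\s^2\|M_i\|_2^2$; no conditioning is needed, and the complex case is dispatched with a terse remark about splitting $M$ and $\mu$ into real and imaginary parts. You instead perform the real embedding $M \mapsto \til{M}=\bigl[\begin{smallmatrix}A\\ B\end{smallmatrix}\bigr]$ up front, which handles the complex case cleanly in one stroke (the paper's ``apply the real version to each part separately'' would cost an extra constant), and you pick the largest \emph{column} $r_{j^*}$, project onto its direction, and condition on $\{G_j: j\ne j^*\}$. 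The row and column choices are dual and equivalent here since both matrices have $n$ of the relevant index; your conditioning step is a tiny bit more work than simply noting that $\langle u, \til{M}G\rangle$ is Gaussian with variance $\|\til{M}^\tran u\|_2^2 \ge \|r_{j^*}\|_2^2$, but it is perfectly correct.
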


\begin{proof}
By rescaling we can assume that $\s = 1$.

First we give the argument in the real case, for $\mu \in \R^n$, $M \in \R^{n \times n}$.
Let $M_i^\tran$ denote the $i$-th row of $M$, and let $\mu = (\mu_1, \ldots, \mu_n)$.
Choose $i \in [n]$ such that $\|M_i\|_2 \ge \|M\|_\HS/\sqrt{n}$.
Note that $M_i^\tran Z \sim N_\R(\nu_i, \|M_i\|_2^2)$ for some $\nu_i \in \R^n$.
Lemma~\ref{lem: conc rv} yields that
$$
\Pr{ |M_i^\tran Z| \le \tau \|M_i\|_2 } \le C t, \quad t>0.
$$
Since $\|MZ\|_2 \ge |M_i^\tran Z|$ and $\|M_i\|_2 \ge \|M\|_\HS/\sqrt{n}$,
this quickly leads to the completion of the proof.

The complex case can be proved by decomposing $\mu$ and $M$
into real and imaginary parts, and applying the real version
of the lemma to each part separately.
\end{proof}

\subsection{Invertibility of random Gaussian perturbations}				 \label{a: invertibility Gaussian perturbation}
%..................

In this appendix we prove Lemma~\ref{lem: invertibility Gaussian perturbation}.

First we note that without loss of generality, we can assume that $m=18$.
Indeed, since $f$ is linear it can be represented as
$$
f(z) = [f(z)_{ij}]_{i,j=1}^3 = [a_{ij}^\tran z + \sqrt{-1} \, b_{ij}^\tran z]_{i,j=1}^3,
$$
where $a_{ij}$ and $b_{ij}$ are some fixed vectors in $\R^m$.
By rotation invariance of $Z$, the joint distribution of the Gaussian
random variables $a_{ij}^\tran Z$ and $b_{ij}^\tran Z$ is determined by
the inner products of the vectors $a_{ij}$ and $b_{ij}$. There are $18$ of
these vectors; so we can isometrically realize them in $\R^{18}$.
It follows that the distribution of $f(Z)$ is preserved, and thus we can assume that $m=18$.

Let $R \ge 1$ be a parameter to be chosen later.
By a standard Gaussian concentration inequality,
$\|Z\|_2 \le R$ with probability at least $1 - 2 \exp(-c R^2)$.
On this event, the matrix in question is well bounded:
$\|I + f(Z)\| \le 1 + \|f(Z)\|_\HS \le 2 K R$, and consequently we have
$$
|\det(I+f(Z))| \le \smin(I+f(Z)) \cdot (2 K R)^2.
$$
Therefore we can estimate the probability in question as follows:
\begin{multline}				\label{eq: det small split}
\Pr{\smin(I+f(Z)) \le t} \\
\le \Pr{|\det(I+f(Z))| \le  (2 K R)^2 t, \; \|Z\| \le R} + 2 \exp(-c R^2).
\end{multline}

Since $f$ is linear, $|\det(I+f(Z))|^2$ is a real polynomial in $Z \in \R^{18}$ of degree $6$,
and thus we can apply Remez inequality, Theorem~\ref{thm: remez convex}.
We are interested in the Gaussian measure of the set
$$
E := \{ Z \in \R^{18} :\; |\det(I+f(Z))| \le  (2 K R)^2 t, \; \|Z\| \le R \}
$$
which is a subset of
$$
V := \{ Z \in \R^{18} :\; \|Z\| \le R \}.
$$
The conclusion Theorem~\ref{thm: remez convex} is in terms of the Lebesgue rather than Gaussian measures of these sets:
$$
|\det(I+MZ)|^2 \le \biggl( \frac{C_1 |V|}{|E|} \biggr)^6 \cdot ((2 K R)^2 t)^2
\quad \text{for all } Z \in V.
$$
Taking square roots and substituting $Z = 0$ in this inequality, we obtain
$$
1 \le \biggl( \frac{C_1 |V|}{|E|} \biggr)^3 \cdot  (2 K R)^2 t,
$$
thus
$$
|E| \le C_1 |V| \cdot ( (2 K R)^2 t)^{1/3}
\le C_2 R^{18} \cdot ( (2 K R)^2 t)^{1/3},
$$
where the last inequality follows from the definition of $V$.
Further, note that the (standard) Gaussian measure of $E$ is bounded by the Lebesgue measure $|E|$,
because the density is bounded by density $(2 \pi)^{-9} \le 1$.
Recalling the definition of $E$, we have shown that
$$
\Pr{|\det(I+f(Z))| \le  (2 K R)^2 t, \; \|Z\| \le R} \le C_2 R^{18} \cdot ( (2 K R)^2 t)^{1/3}.
$$
Substituting this back into \eqref{eq: det small split}, we obtain
$$
\Pr{|\det(I+f(Z)) \le t} \le C_2 R^{18} \cdot ( (2 K R)^2t)^{1/3} + 2 \exp(-c R^2).
$$
Finally, we can optimize the parameter $R \ge 1$, choosing for example $R = t^{-1/1000}$
to conclude that
$$
\Pr{|\det(I+f(Z)) \le t} \le C_3 K^{2/3} t^{1/4}.
$$
This completes the proof of Lemma~\ref{lem: invertibility Gaussian perturbation}.
\qed

\subsection{Breaking complex orthogonality}								\label{a: random basis}
%....................

In this section we prove Lemma~\ref{lem: breaking orthogonality} about
breaking complex orthogonality by a random change of basis.

We will present the argument in dimension $n=3$; the dimension $n=2$ is very similar.
Without loss of generality, we can assume that $t < 1/2$.
Note that by assumption,
$$
\|B\| \le \|T\| \|D\| \le K \|T\|.
$$
Then the probability in the left side of \eqref{eq: prob BB} is bounded by
$$
\Pr{ \|B B^\tran - I\| \le K^2 \|T\|^2 t }
= \Pr { \|\widehat{T} \widehat{T}^\tran - I\| \le K^2 \|T\|^2 t },
\quad \text{where } \widehat{T} = TQD.
$$
We can pass to Hilbert-Schmidt norms (recall that all matrices are $3 \times 3$ here)
and further bound this probability by
$$
\P \big\{ \|\widehat{T} \widehat{T}^\tran - I\|_\HS \le 3 K^2 \|T\|_\HS^2 t \big\}.
$$

Assume the conclusion of the lemma fails, so this probability is
larger than $C (t K^2/\d)^c$. We are going to apply Remez inequality
and conclude that $\|B B^\tran - I\|$ is small with probability one.
Recalling the Hurwitz description of a uniform random rotation $Q \in SO(3)$
which we used in Section~\ref{s: middle}, we can parameterize $Q$ by a uniform random point
on the real torus $T_3 = S^1 \times S^2 \subset \R^5$.
Under this parametrization,
$\big( \|\widehat{T} \widehat{T}^\tran - I\|_\HS / 3 K^2 \|T\|_\HS^2 \big)^2$
becomes a polynomial in five variables and with constant degree restricted to $T_3$.

Our assumption above is that this polynomial is bounded by $t^2$ on a subset of $T_3$
of measure larger than $C (t K^2/\d)^c$.
Then the Remez-type inequality for the torus \eqref{eq: remez torus}
implies that the polynomial is bounded
on the entire $T_3$ by
$$
\Big( \frac{C_1}{C (t K^2/\d)^c} \Big)^{C_0} t^2
\le \Big( \frac{\d}{10^4 K^2} \Big)^2
$$
where the last inequality follows by a suitable choice of a large absolute constant $C$ and
a small absolute constant $c$ in the statement of the lemma.
This means that
$$
\|\widehat{T} \widehat{T}^\tran - I\|_\HS
\le 3 K^2 \|T\|_\HS^2 \cdot \frac{\d}{10^4 K^2}
\le \frac{\d}{500} \|T\|_\HS^2
\quad \text{for all } Q \in SO(3).
$$

There is an entry of $T$ such that $|T_{ij}| \ge \frac{1}{3} \|T\|_\HS$.
Since the conclusion of the Lemma is invariant under permutations of the rows of $T$,
we can permute the rows in such a way that $T_{ij}$ is on the diagonal, $i=j$.
Furthermore, for simplicity we can assume that $i=j=1$; the general case is similar.
We have
\begin{equation}				\label{eq: That entry}
|(\widehat{T} \widehat{T})_{11}^\tran - 1|
\le \frac{\d}{500} \|T\|_\HS^2
\quad \text{for all } Q \in SO(3).
\end{equation}

We shall work with $Q$ of the form $Q = Q_1 Q_2$ where $Q_1, Q_2 \in SO(3)$.
We shall use $Q_1$  to mix the entries of $T$ and $Q_2$
to test the inequality \eqref{eq: That entry}.
Let $Q_1 = Q_1(\phi) = \left[ \begin{smallmatrix}
  \cos \phi & \sin \phi & 0 \\
  -\sin \phi & \cos \phi & 0 \\
  0 & 0 & 1
\end{smallmatrix} \right]$, $\phi \in [0, 2\pi]$, and consider the matrix
$$
G := TQ_1.
$$
Since $G_{11} = T_{11} \cos \phi - T_{12} \sin \phi$ and
$G_{12} = T_{11} \sin \phi + T_{12} \cos \phi$,
one can find $\phi$ (and thus $Q_1$) so that
\begin{equation}				\label{eq: G11 G12}
|G_{11}^2 - G_{12}^2| \ge \frac{1}{9} |T_{11}|^2 \ge \frac{1}{81} \|T\|_\HS^2.
\end{equation}

Recall that $\widehat{T} = T Q_1 Q_2 D = G Q_2 D$.
Substituting into inequality \eqref{eq: That entry}
$Q_2 = \left[ \begin{smallmatrix}
  1 & 0 & 0 \\
  0 & 1 & 0 \\
  0 & 0 & 1
\end{smallmatrix} \right]$
and
$Q_2 = \left[ \begin{smallmatrix}
  0 & 1 & 0 \\
  1 & 0 & 0 \\
  0 & 0 & -1
\end{smallmatrix} \right]$, we obtain
\begin{gather*}
  |d_1^2 G_{11}^2 + d_2^2 G_{12}^2 + d_3^2 G_{13}^2 - 1| \le \frac{\d}{500} \|T\|_\HS^2; \\
  |d_1^2 G_{12}^2 + d_2^2 G_{11}^2 + d_3^2 G_{13}^2 - 1| \le \frac{\d}{500} \|T\|_\HS^2.
\end{gather*}
We subtract the second inequality from the first and conclude that
\begin{equation}								\label{eq: dG}
|(d_1^2 - d_2^2) (G_{11}^2 - G_{12}^2)| \le \frac{\d}{250} \|T\|_\HS^2.
\end{equation}

On the other hand, recall that $|d_1^2 - d_2^2| \ge \d$ by assumption
and $|G_{11}^2 - G_{12}^2| \ge \frac{1}{81} \|T\|_\HS^2$ by \eqref{eq: G11 G12}.
Hence $|(d_1^2 - d_2^2) (G_{11}^2 - G_{12}^2)| \ge \frac{\d}{81} \|T\|_\HS^2$.
This contradicts \eqref{eq: dG}. The proof of Lemma~\ref{lem: breaking orthogonality} is complete.
\qed

\end{document}